\def\@themcountersep{}
\newtheorem{theorem}{Theorem}[section]
\newtheorem{example}[theorem]{Example}
\newtheorem{lemma}[theorem]{Lemma}
\newtheorem{proposition}[theorem]{Proposition}
\newtheorem{remark}[theorem]{Remark}
\renewcommand{\algorithmicrequire}{\textbf{Input:}}
\renewcommand{\algorithmicensure}{\textbf{Output:}}
\newcommand{\argmax}{\mathop{\rm arg~max}\limits}
\newcommand{\argmin}{\mathop{\rm arg~min}\limits}
\newcommand{\modifyFirst}[1]{\textcolor{black}{#1}}
\definecolor{fuchsia}{rgb}{1.0, 0.0, 1.0} 
\newcommand{\modifySecond}[1]{\textcolor{black}{#1}}
\definecolor{new_orange}{rgb}{1, 0.4, 0.2}
\definecolor{new_green}{rgb}{0.1,0.5, 0.2}
\newcommand{\modifyThird}[1]{\textcolor{black}{#1}}
\begin{document}

\title{A New Extension of Chubanov's Method to Symmetric Cones}

\author{Shin-ichi Kanoh\thanks{
Graduate School of Systems and Information Engineering, University of Tsukuba, Tsukuba, Ibaraki 305-8573, and Japan Society for the Promotion of Science, 5-3-1 Kojimachi, Chiyoda-ku, Tokyo 102-0083, Japan. email: s2130104@s.tsukuba.ac.jp
}
and 
Akiko Yoshise\thanks{Corresponding author. Faculty of Engineering, Information and Systems, University of Tsukuba, Tsukuba, Ibaraki 305-8573, Japan. email: yoshise@sk.tsukuba.ac.jp
}     
}

\date{October 2021 \\ Revised November 2022 \\ Revised April 2023}

\maketitle     
\begin{abstract}
We propose a new variant of Chubanov's method for solving the feasibility problem over the symmetric cone by extending Roos's method (2018) of solving the feasibility problem over the nonnegative orthant. The proposed method considers a feasibility problem associated with a norm induced by the maximum eigenvalue of an element and uses a rescaling focusing on the upper bound for the sum of eigenvalues of any feasible solution to the problem. Its computational bound is (i) equivalent to that of Roos's original method (2018) and superior to that of Louren\c{c}o et al.'s method (2019) when the symmetric cone is the nonnegative orthant, (ii) superior to that of Louren\c{c}o et al.'s method (2019) when the symmetric cone is a Cartesian product of second-order cones, (iii) equivalent to that of Louren\c{c}o et al.'s method (2019) when the symmetric cone is the simple positive semidefinite cone, and (iv) superior to that of Pena and Soheili's method (2017) for any simple symmetric cones under the feasibility assumption of the problem imposed in Pena and Soheili's method (2017).
We also conduct numerical experiments that compare the performance of our method with existing methods by generating strongly (but ill-conditioned) feasible instances.
For any of these instances, the proposed method is rather more efficient than the existing methods in terms of accuracy and execution time.
\end{abstract}

\section{Introduction}
Recently, Chubanov \cite{Chubanov2012,Chubanov2015} proposed a new polynomial-time algorithm for solving the problem (${\rm P}(A)$), 
\begin{equation}
\begin{array}{lllllll}
{\rm P}(A)	&	&\mbox{find}	&x > \bm{0}	&	&\mbox{s.t.} &  Ax = \bm{0},
\end{array}
\notag
\end{equation}
where $A$ is a given integer (or rational) matrix and $\mbox{rank} (A) = m$ and $\bm{0}$ is an $n$-dimensional vector of $0$s. The method explores the feasibility of the following problem ${\rm P}_{S_1}(A)$, which is equivalent to ${\rm P}(A)$ and given by
\begin{equation}
\begin{array}{llllllll}
{\rm P}_{S_1}(A)	&	&\mbox{find}   &x > \bm{0}	&	&\mbox{s.t.} &Ax = \bm{0},	&\bm{0} < x \leq \bm{1},
\end{array}
\notag
\end{equation}
where $\bm{1}$ is an $n$-dimensional vector of $1$s. Chubanov's method consists of two ingredients, the ``main algorithm'' and the ``basic procedure.''
\modifyFirst{
Note that the alternative problem ${\rm D}(A)$ of ${\rm P}(A)$ is given by 
\begin{equation}
\notag
\begin{array}{llllllllll}
{\rm D}(A)	&	&\mbox{find}   &y \geq \bm{0}	&	&\mbox{s.t.}	&y \in \mbox{range} A^\top,	&y \neq \bm{0},
\end{array}
\end{equation}
where ${\rm range} A^\top$ is the orthogonal complement of ${\rm ker}A$.
}
The structure of the method is as follows: In the outer iteration, the main algorithm calls the basic procedure, which generates a sequence in $\mathbb{R}^n$ using projection to the set \modifyFirst{$\mbox{ker} A := \{x \in \mathbb{R}^n \mid Ax= \bm{0}\}$.} The basic procedure terminates in a finite number of iterations returning one of the following:
(i). a solution of problem ${\rm P}(A)$, 
\modifyFirst{(ii). a solution of problem ${\rm D}(A)$}, or
(iii). a cut of ${\rm P}(A)$, i.e., an index $j \in \{ 1,2,\dots,n \}$ for which $0 < x_j \leq \frac{1}{2} \notag $ holds for any feasible solution of problem ${\rm P}_{S_1}(A)$.

If result (i) or (ii) is returned by the basic procedure, then the feasibility of problem ${\rm P}(A)$ can be determined and the main procedure stops. If result (iii) is returned, then the main procedure generates a diagonal matrix $D \in \mathbb {R}^{n \times n} $ with a $ (j, j) $ element of $2$ and all other diagonal elements of $1$ and rescales the matrix as $AD ^ {-1} $. Then, it calls the basic procedure with the rescaled matrix. Chubanov's method checks the feasibility of ${\rm P}(A) $ by repeating the above procedures.

For problem ${\rm P}(A)$, 
\cite{Roos2018} proposed a tighter cut criterion of the basic procedure than the one used in \cite{Chubanov2015}. \cite{Chubanov2015} used the fact that $x_j \leq \frac{ \sqrt{n} \|z\|_2}{y_j}$ holds for any $y \in \mathbb{R}^n$ satisfying $\sum_{i=1}^n y_i = 1 , y \geq 0$ and $y \notin \mbox{range} A^T $, $z \in \mathbb{R}^n $ obtained by projecting this $y$ onto $\mbox{ker} A$, and any feasible solution $x \in \mathbb{R}^n$ of ${\rm P}_{S_1}(A)$, and the basic procedure is terminated if a $y$ is found for which $\frac{ \sqrt{n} \|z\|_2}{y_j} \leq \frac{1}{2}$ holds for some index $j$.
On the other hand, \cite{Roos2018} showed that for $v = y - z$, $x_j \leq \min ( 1 , \bm{1}^T \left[ -v /v_j \right]^+ ) \leq \frac{ \sqrt{n} \|z\|_2}{y_j}$ holds if $v_j \neq 0$, where $\left[ -v / v_j \right]^+$ is the projection of $-v/v_j \in \mathbb{R}^n$ onto the nonnegative orthant and $\bm{1}$ is the vector of ones, and the basic procedure is terminated if a $y$ is found for which $\bm{1}^T \left[ -v/v_j \right]^+ \leq \frac{1}{2}$ holds.

Chubanov's method has also been extended to include the feasibility problem over the second-order cone \cite{Kitahara2018} and the symmetric cone \cite{Pena2017,Bruno2019}. The feasibility problem over the symmetric cone is of the form, 
\begin{equation}
\begin{array}{lllllll}
{\rm P}(\mathcal{A})	&	&\mbox{find}   &x \in \mbox{int} \hspace{0.75mm} \mathcal{K}	&	&\mbox{s.t.}	&\mathcal{A} (x) = \bm{0}, 
\end{array}
\notag
\end{equation}
where $\mathcal{A}$ is a linear operator, $\mathcal{K}$ is a symmetric cone, and $\mbox{int} \hspace{0.75mm} \mathcal{K}$ is the interior of the set $\mathcal{K}$.
As proposed in \cite{Pena2017,Bruno2019}, for problem ${\rm P}(\mathcal{A})$, the structure of Chubanov's method remains the same; i.e., the main algorithm calls the basic procedure, and the basic procedure returns one of the following in a finite number of iterations: 
(i). a solution of problem ${\rm P}(\mathcal{A})$, or
(ii). a solution of the alternative problem of problem ${\rm P}(\mathcal{A})$, or
(iii). a recommendation of scaling problem ${\rm P}(\mathcal{A})$.
If result (i) or (ii) is returned by the basic procedure, then the feasibility of the problem ${\rm P}(\mathcal{A})$ can be determined and the main procedure stops. If result (iii) is returned, the problem is scaled appropriately and the basic procedure is called again.

It should be noted that the purpose of rescaling differs between \cite{Bruno2019} and \cite{Pena2017}.
In \cite{Pena2017}, the authors devised a rescaling method so that the following value becomes larger:
\begin{equation}
\delta (\mbox{ker} \hspace{0.3mm} \mathcal{A} \cap \mathcal{K}) := \max_x \left \{ {\rm det}(x) \mid x \in \mbox{ker} \hspace{0.3mm} \mathcal{A} \cap \mathcal{K} , \|x\|_J^2 = r  \right\}, \notag
\end{equation}
where $\mbox{ker} \hspace{0.3mm} \mathcal{A} :=\{ x \mid \mathcal{A}(x) = \bm{0} \}$ and $\|x\|_J$ is the norm induced by the inner product $\langle x, y \rangle = {\rm trace}(x \circ y)$ defined in section \ref{sec: notation}.
They proposed four updating schemes to be employed in the basic procedure 
and conducted numerical experiments to compare the effect of these schemes when the symmetric cone is the nonnegative orthant \cite{Pena2019}.

In \cite{Bruno2019}, the authors assumed that the symmetric cone $\mathcal{K}$ is given by the Cartesian product of $p$ simple symmetric cones $\mathcal{K}_1, \ldots, \mathcal{K}_p$, and they investigated the feasibility of the problem (${\rm P}_{S_{1,\infty}}(\mathcal{A})$),
\begin{equation}
\begin{array}{llllllll}
{\rm P}_{S_{1,\infty}}(\mathcal{A})	&	&\mbox{find}   &x \in \mbox{int} \hspace{0.75mm} \mathcal{K}	&	&\mbox{s.t.}	&\mathcal{A} (x) =\bm{0},	&\|x\|_{1 , \infty} \leq 1,
\end{array}
\notag
\end{equation}
where for each $x = (x_1, \ldots, x_p)  \in \mathcal{K} =\mathcal{K}_1 \times \cdots \mathcal{K}_p$,  $\|x\|_{1,\infty}$ is defined by $\|x\|_{1,\infty} := \max \{ \|x_1\|_1 , \dots ,\|x_p\|_1 \}$,
and $\|x\|_1$ is the sum of the absolute values of all eigenvalues of $x$. Note that if $p=1$, then problem ${\rm P}_{S_{1,\infty}}(\mathcal{A})$ turns out to be ${\rm P}_{S_1}(\mathcal{A})$, which is equivalent to ${\rm P}(\mathcal{A})$:
\begin{equation}
\begin{array}{llllll}
{\rm P}_{S_1}(\mathcal{A})	&\mbox{find}   &x \in \mbox{int} \hspace{0.75mm} \mathcal{K}	&\mbox{s.t.}	&\mathcal{A} (x) =\bm{0},	&\|x\|_{1} \leq 1.
\end{array}
\notag
\end{equation}

The authors focused on the volume of the feasible region of ${\rm P}_{S_{1,\infty}}(\mathcal{A})$ and devised a rescaling method so that the volume becomes smaller.
Their method will stop when the feasibility of problem {${\rm P}_{S_{1,\infty}}(\mathcal{A})$ or the fact that the minimum eigenvalue of any feasible solution of problem ${\rm P}_{S_{1,\infty}}(\mathcal{A})$ is less than $\varepsilon$ is determined.

The aim of this paper is to devise a new variant of Chubanov's method for solving ${\rm P}(\mathcal{A})$ by extending Roos's method \cite{Roos2018} to the following feasibility problem (${\rm P}_{S_{\infty}}(\mathcal{A})$) over the symmetric cone $\mathcal{K}$:
\begin{equation}
\begin{array}{llllllll}
{\rm P}_{S_{\infty}}(\mathcal{A})	&	&\mbox{find}   &x \in \mbox{int} \hspace{0.75mm} \mathcal{K}	&	&\mbox{s.t.}	&\mathcal{A} (x) = \bm{0},	&\|x\|_\infty \leq 1,
\end{array}
\notag
\end{equation}
where $\|x\|_\infty$ is the maximum absolute eigenvalue of $x$.
Throughout this paper, we will assume that $\mathcal{K}$ is the Cartesian product of $p$ simple symmetric cones $\mathcal{K}_1, \dots, \mathcal{K}_p$, i.e., $\mathcal{K} = \mathcal{K}_1 \times \dots \times \mathcal{K}_p$.
Here, we should mention an important issue about Lemma 4.2 in \cite{Roos2018}, which is one of the main results of \cite{Roos2018}.
The proof of Lemma 4.2 given in the paper \cite{Roos2018} is incorrect and a correct proof is provided in the paper \cite{Wei2019}, while this study derives theoretical results without referring to the lemma.
Our method has a feature that the main algorithm works while keeping information about the minimum eigenvalue of any feasible solution of ${\rm P}_{S_{\infty}}(\mathcal{A})$ and, in this sense, it is closely related to Louren\c{c}o et al.'s method \cite{Bruno2019}. Using the norm $\| \cdot \|_\infty$ in problem ${\rm P}_{S_\infty} (\mathcal{A})$ makes it possible to 
\begin{itemize}
\item 
calculate the upper bound for the minimum eigenvalue of any feasible solution of ${\rm P}_{S_\infty} (\mathcal{A})$, 
\item
quantify the feasible region of ${\rm P} (\mathcal{A})$, and hence, 
\item
determine whether there exists a feasible solution of ${\rm P} (\mathcal{A})$ whose minimum eigenvalue is greater than $\varepsilon$ as in \cite{Bruno2019}.
\end{itemize}

Note that the symmetric cone optimization includes several types of problems (linear, second-order cone, and semi-definite optimization problems) with various settings and the computational bound of an algorithm depends on these settings. As we will describe in section \ref{sec: compare}, the theoretical computational bound of our method is 
\begin{itemize}
\item equivalent to that of Roos's original method \cite{Roos2018} and superior to that of Louren\c{c}o et al.'s method \cite{Bruno2019} when the symmetric cone is the nonnegative orthant,
\item superior to that of Louren\c{c}o et al.'s method when the symmetric cone is a Cartesian product of second-order cones, and
\item equivalent to that of Louren\c{c}o et al.'s method when the symmetric cone is the simple positive semidefinite cone, under the assumption that the costs of computing the spectral decomposition and of the minimum eigenvalue are of the same order for any given symmetric matrix.
\item superior to that of Pena and Soheili's method~\cite{Pena2017} for any simple symmetric cones under the feasibility assumption of the problem imposed in~\cite{Pena2017}.
\end{itemize}
}

Another aim of this paper is to give comprehensive numerical comparisons of the existing algorithms and our method. 
As described in section \ref{sec: numerical experiments}, we generate strongly feasible ill-conditioned instances, i.e.,   $\mbox{ker} \hspace{0.3mm} \mathcal{A} \cap \mbox{int} \hspace{0.75mm} \mathcal{K} \neq  \emptyset$ and $x \in \mbox{ker} \hspace{0.3mm} \mathcal{A} \cap \mbox{int} \hspace{0.75mm} \mathcal{K}$ has positive but small eigenvalues,
for the simple positive semidefinite cone $\mathcal{K}$,  and conduct numerical experiments.

The paper is organized as follows:
Section \ref{sec:EJA} contains a brief description of Euclidean Jordan algebras and their basic properties.
Section \ref{sec: extension} gives a collection of propositions which are necessary to extend Roos's method to  problem ${\rm P}_{S_{\infty}}(\mathcal{A})$ over the symmetric cone.
In sections \ref{sec: basic procedure} and \ref{sec: main algorithm}, we explain the basic procedure and the main algorithm of our variant of Chubanov's method.
Section \ref{sec: compare} compares the theoretical computational bounds of  Louren\c{c}o et al.'s method~\cite{Bruno2019}, Pena and Soheili's method~\cite{Pena2017} and our method.
In section \ref{sec: numerical experiments}, we conduct numerical experiments comparing our variant with the existing methods.
The conclusions are summarized in section \ref{sec: concluding remarks}.

\section{Euclidean Jordan algebras and their basic properties}
\label{sec:EJA}
In this section, we briefly introduce Euclidean Jordan algebras and symmetric cones. For more details, see~\cite{Faraut1994}. In particular, the relation between symmetry cones and Euclidean Jordan algebras is given in Chapter III (Koecher and Vinberg theorem) of~\cite{Faraut1994}.
\subsection{Euclidean Jordan algebras}
Let $\mathbb{E}$ be a real-valued vector space equipped with an inner product $\langle \cdot, \cdot \rangle$ and a bilinear operation $\circ$ : $\mathbb{E} \times \mathbb{E} \rightarrow \mathbb{E}$,
and $e$ be the identity element, i.e.,$x \circ e = e \circ x = x$ holds for any $ x \in \mathbb{E}$.
$(\mathbb{E}, \circ)$ is called a Euclidean Jordan algebra if it satisfies
\begin{align*}
x \circ y = y  \circ x,  \ \
x \circ (x^2 \circ y) = x^2 \circ (x \circ y), \ \
\langle x \circ y , z \rangle = \langle y , x \circ z \rangle
\end{align*}
for all $x,y,z \in \mathbb{E}$ and $x^2 := x \circ x$.
We denote $y \in \mathbb{E}$ as $x^{-1}$ if  $y$ satisfies $x \circ y = e$.
$c \in \mathbb{E}$ is called an {\em idempotent} if it satisfies $c \circ c = c$, and an idempotent $c$ is called {\em primitive} if it can not be written as a sum of two or more nonzero idempotents. 
A set of primitive idempotents $c_1, c_2, \ldots c_k$ is called a {\em Jordan frame} if $c_1, \ldots c_k$ satisfy
\begin{equation}
\notag
c_i \circ c_j = 0 \ (i \neq j), \ \
c_i \circ c_i = c_i \ (i= 1 , \dots , k), \ \
\sum_{i=1}^k c_i = e.
\end{equation}
For $x \in  \mathbb{E}$, the {\em degree} of $x$ is the smallest integer $d$ such that the set $\{e,x,x^2,\ldots,x^d\}$ is linearly independent.
The {\em rank} of $\mathbb{E}$ is the maximum integer $r$ of the degree of $x$ over all $x \in \mathbb{E}$.
The following properties are known.

\begin{proposition}[Spectral theorem (cf. Theorem III.1.2 of \cite{Faraut1994})]
\label{prop:spectral}
Let $(\mathbb{E}, \circ)$ be a Euclidean Jordan algebra having rank $r$. 
For any $x \in \mathbb{E} $, there exist real numbers $\lambda_1 , \dots , \lambda_r$ and a Jordan frame $c_1 , \dots , c_r $ for which the following holds:
\begin{equation}
x = \sum_{i=1}^r \lambda_i c_i \notag.
\end{equation}
The numbers $\lambda_1 , \dots , \lambda_r$ are uniquely determined {\em eigenvalues} of $x$ (with their multiplicities).
Furthermore, ${\rm trace}(x) := \sum_{i=1}^r \lambda_i$, $\det(x) := \prod_{i=1}^r \lambda_i$.
\end{proposition}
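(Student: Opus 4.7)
The plan is to follow the classical route via minimal polynomials and regular elements, as in Faraut--Kor\'anyi. First I would establish \emph{power associativity}: from the Jordan axiom $x\circ(x^2\circ y)=x^2\circ(x\circ y)$ one shows inductively that any two polynomials in $x$ commute and associate, so the unital subalgebra $\mathbb{R}[x]\subseteq \mathbb{E}$ generated by $e$ and $x$ is a genuine commutative associative algebra. Using this, the powers $e,x,x^2,\ldots$ behave as ordinary powers, so the definition of the degree $d(x)$ makes sense and yields a monic \emph{minimal polynomial} $\mu_x(t)\in\mathbb{R}[t]$ of degree $d(x)$ with $\mu_x(x)=0$.

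Next I would isolate the \emph{regular} elements, those with $d(x)=r$ (the rank). A standard dimension argument shows regular elements form a Zariski-open, hence dense, subset of $\mathbb{E}$: the coefficients of $\mu_x$ depend polynomially on $x$ on this dense set. For a regular $x$, write $\mu_x(t)=\prod_{i=1}^{r}(t-\lambda_i)$. The roots $\lambda_i$ are real: the operator $L(x):y\mapsto x\circ y$ is self-adjoint because $\langle x\circ y,z\rangle=\langle y,x\circ z\rangle$, so its spectrum on the invariant subspace $\mathbb{R}[x]$ is real, and the minimal polynomial of $L(x)|_{\mathbb{R}[x]}$ coincides with $\mu_x$ by faithfulness of the evaluation map $p\mapsto p(x)$ on $\mathbb{R}[x]$. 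Minimality of $\mu_x$ together with regularity forces the $\lambda_i$ to be pairwise distinct.

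Then I would produce the idempotents by Lagrange interpolation: set
\[
c_i \;:=\; \prod_{j\neq i}\frac{x-\lambda_j e}{\lambda_i-\lambda_j}\qquad (i=1,\ldots,r).
\]
Working inside the commutative associative algebra $\mathbb{R}[x]$ and using $\mu_x(x)=0$, the routine identities of Lagrange interpolation give $c_i\circ c_j=0$ for $i\neq j$, $c_i\circ c_i=c_i$, $\sum_i c_i=e$, and $\sum_i\lambda_i c_i = x$. Primitivity of each $c_i$ follows from the maximality $r$ of the rank: if some $c_i=c'+c''$ split into nonzero orthogonal idempotents, the element $x+c'$ would have degree strictly greater than $r$, contradicting the definition of rank. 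Thus $\{c_1,\ldots,c_r\}$ is a Jordan frame and the spectral decomposition holds for every regular $x$.

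The last step is to pass from regular to arbitrary elements by a density/compactness argument. Given $x\in\mathbb{E}$, choose regular $x^{(n)}\to x$; each $x^{(n)}=\sum_i\lambda_i^{(n)}c_i^{(n)}$ with the $\lambda_i^{(n)}$ bounded (by, say, $\|L(x^{(n)})\|$) and the Jordan frame lying in the compact set $\{c:c\circ c=c,\ \|c\|\le 1\}$. Extracting a convergent subsequence yields real numbers $\lambda_i$ and idempotents $c_i$ (possibly coincident) satisfying the Jordan frame relations and $x=\sum_i\lambda_i c_i$; grouping equal $\lambda_i$'s and splitting their idempotents into primitive pieces finishes the existence part. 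Uniqueness of the $\lambda_i$ (with multiplicities) follows because they are exactly the roots of $\mu_x$, counted by the dimensions of the Peirce eigenspaces of $L(x)$. The main obstacle I expect is the reality and distinctness of the roots of $\mu_x$ for regular $x$: one must carefully separate the Jordan-theoretic minimal polynomial from the operator-theoretic minimal polynomial of $L(x)$, and it is the Euclidean (inner-product) structure that makes the two coincide on $\mathbb{R}[x]$ and thereby supplies the real spectrum.
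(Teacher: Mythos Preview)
The paper does not supply its own proof of this proposition; it simply cites Theorem III.1.2 of Faraut--Kor\'anyi and moves on. Your sketch is precisely the classical argument given there (minimal polynomial, regular elements, Lagrange interpolation for the idempotents, then a density/compactness passage to arbitrary $x$), so in that sense there is nothing to compare.

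Two small points in your write-up are worth tightening. First, in the primitivity step, the specific element $x+c'$ may accidentally have $\lambda_i+1=\lambda_j$ for some $j$; the clean fix is to observe that if $c_i=c'+c''$ then $c',c'',c_1,\ldots,\widehat{c_i},\ldots,c_r$ are $r+1$ nonzero pairwise orthogonal idempotents, and any element $\sum_k \mu_k d_k$ with distinct $\mu_k$ over this system has degree $r+1$, contradicting the definition of rank. Second, in the limit step the $c_i^{(n)}$ may collapse (some limits may vanish or coincide), so what survives in the limit is only a complete system of orthogonal idempotents, not a priori a Jordan frame; one then refines each nonzero limit idempotent into primitives to obtain the Jordan frame. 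Both are routine, but as written the sketch glosses over them.
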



\subsection{Symmetric cone}
\label{subsec:symmetric cone}

A proper cone is symmetric if it is self-dual and homogeneous. 
It is known that the set of squares $\mathcal{K} = \{ x^2 : x \in \mathbb{E} \}$ is the symmetric cone of  $\mathbb{E}$  (cf. Theorems III.2.1 and III.3.1 of \cite{Faraut1994}).
The following properties can be derived from the results in \cite{Faraut1994}, as in  Corollary 2.3 of \cite{Yoshise2007}:
\begin{proposition}
\label{prop:lambda-Jordan}
Let $x \in \mathbb{E}$ and let $\sum_{j=1}^r \lambda_j c_j$ be a decomposition of $x$ given by Propositoin \ref{prop:spectral}. Then
\begin{description}
\item[(i)]
$x \in \mathcal{K}$ if and only if  $\lambda_j \geq 0 \ (j=1,2,\ldots,r)$, 
\item[(ii)]
$x \in {\rm int} \hspace{0.75mm} \mathcal{K}$ if and only if $\lambda_j > 0 \ (j=1,2,\ldots,r)$.
\end{description}
\end{proposition}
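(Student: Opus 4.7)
The plan is to prove (i) directly from the definition $\mathcal{K}=\{y^2:y\in\mathbb{E}\}$ together with the spectral theorem, and then to bootstrap (ii) from (i) by adding an invertibility argument.

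For (i), the ``only if'' direction starts from $x=y^2$ for some $y\in\mathbb{E}$. Applying Proposition~\ref{prop:spectral} to $y$, write $y=\sum_{j=1}^r \mu_j d_j$ with $\{d_j\}$ a Jordan frame. Using $d_i\circ d_j=0$ for $i\neq j$ and $d_j\circ d_j=d_j$, one gets $x=y^2=\sum_{j=1}^r \mu_j^2 d_j$. Since the eigenvalues in Proposition~\ref{prop:spectral} are uniquely determined (with multiplicities) by $x$, the $\lambda_j$ must be a permutation of the $\mu_j^2$, hence all nonnegative. For the converse, assume $\lambda_j\geq 0$ for every $j$ and set $y:=\sum_{j=1}^r \sqrt{\lambda_j}\,c_j$. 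A short computation using the Jordan-frame identities gives $y^2=\sum_{j=1}^r \lambda_j c_j = x$, so $x\in\mathcal{K}$.

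For (ii), I would argue that $x\in \mathrm{int}\,\mathcal{K}$ iff $x\in\mathcal{K}$ and $x$ is invertible (equivalently $\det(x)\neq 0$), then combine with (i). In one direction, if all $\lambda_j>0$, then $y=\sum_j \sqrt{\lambda_j}\,c_j$ from part (i) has $\tilde y:=\sum_j (1/\sqrt{\lambda_j})\,c_j$ satisfying $y\circ \tilde y=e$, so $y$ is invertible and $x=y^2$ lies in the set of squares of invertible elements; a standard fact (see Theorem~III.2.1 of~\cite{Faraut1994}, cf.\ Corollary~2.3 of~\cite{Yoshise2007}) identifies this set with $\mathrm{int}\,\mathcal{K}$. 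Conversely, if some $\lambda_j=0$, then $\det(x)=\prod_i \lambda_i=0$, so $x$ lies on the boundary of $\mathcal{K}$; indeed, for arbitrarily small $\varepsilon>0$ the perturbation $x-\varepsilon c_j = -\varepsilon c_j + \sum_{i\neq j}\lambda_i c_i$ has a negative eigenvalue and thus fails (i), showing $x\notin \mathrm{int}\,\mathcal{K}$.

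The only nontrivial step is the identification $\mathrm{int}\,\mathcal{K}=\{y^2 : y\in\mathbb{E},\ y\text{ invertible}\}$, which is where we lean on Faraut--Kor\'anyi; everything else is formal manipulation using the Jordan-frame relations and the uniqueness part of Proposition~\ref{prop:spectral}. The boundary argument above can also be phrased purely in terms of the continuous dependence of the spectrum on $x$: in a neighborhood of an $x$ with a zero eigenvalue one can find points with a negative eigenvalue, hence such an $x$ is not interior.
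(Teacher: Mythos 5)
The paper itself does not prove this proposition; it simply records it as ``derived from the results in~\cite{Faraut1994}, as in Corollary~2.3 of~\cite{Yoshise2007}.'' Your write-up supplies a complete argument, and it is correct. Part (i) is exactly the expected Jordan-frame computation together with the uniqueness clause of Proposition~\ref{prop:spectral}: $y^2=\sum_j \mu_j^2 d_j$ forces $\{\lambda_j\}=\{\mu_j^2\}$ as multisets, and conversely $y=\sum_j\sqrt{\lambda_j}\,c_j$ squares to $x$. For part (ii), your ``only if'' direction via the perturbation $x-\varepsilon c_j$ and (i) is clean and self-contained. Your ``if'' direction, by contrast, leans on an external characterization $\mathrm{int}\,\mathcal{K}=\{y^2 : y\ \text{invertible}\}$ from Faraut--Kor\'anyi; that citation is legitimate, but you could close the loop with the same perturbation idea you already use: if $\lambda_j>0$ for all $j$, set $\delta:=\min_j\lambda_j>0$; then $x-\tfrac{\delta}{2}e=\sum_j(\lambda_j-\tfrac{\delta}{2})c_j$ has all coefficients positive, so by (i) it lies in $\mathcal{K}$, whence $x\in\tfrac{\delta}{2}e+\mathcal{K}\subseteq\mathrm{int}\,\mathcal{K}+\mathcal{K}\subseteq\mathrm{int}\,\mathcal{K}$, using only the standard facts that $e\in\mathrm{int}\,\mathcal{K}$ and that $\mathrm{int}\,\mathcal{K}+\mathcal{K}\subseteq\mathrm{int}\,\mathcal{K}$ for a convex cone. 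Either way, your proof establishes what the paper leaves to references.
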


From Propositions \ref{prop:spectral} and \ref{prop:lambda-Jordan} for any $x \in \mathbb{E}$, its projection $P_\mathcal{K} (x)$ onto $\mathcal{K}$ can be written as an operation to round all negative eigenvalues of $x$ to $0$, i.e., 
$P_\mathcal{K} (x) = \sum_{i=1}^r [\lambda_i]^+ c_i$, where $[\cdot]^+$ denotes the projection onto the nonnegative orthant. 
Using $P_\mathcal{K}$, we can decompose any $x \in \mathbb{E}$ as follows.
\begin{lemma}
\label{lemma:-1}
Let $x \in \mathbb{E}$, and $\mathcal{K}$ be the symmetric cone corresponding to $\mathbb{E}$.
Then, $x$ can be decomposed into $x = P_\mathcal{K} (x) - P_\mathcal{K} (-x)$.
\end{lemma}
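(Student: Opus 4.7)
The plan is to use the spectral decomposition from Proposition \ref{prop:spectral} together with the explicit formula for $P_{\mathcal{K}}$ stated just above the lemma. First I would fix a spectral decomposition $x = \sum_{i=1}^r \lambda_i c_i$ with $c_1,\dots,c_r$ a Jordan frame, and observe that $-x = \sum_{i=1}^r (-\lambda_i) c_i$ is a spectral decomposition of $-x$ using the \emph{same} Jordan frame, so that the eigenvalues of $-x$ are exactly $-\lambda_1,\dots,-\lambda_r$.

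Next I would invoke the formula $P_{\mathcal{K}}(x) = \sum_{i=1}^r [\lambda_i]^+ c_i$ (given in the paragraph preceding the lemma), applied both to $x$ and to $-x$, giving
\begin{equation}
P_{\mathcal{K}}(x) - P_{\mathcal{K}}(-x) = \sum_{i=1}^r \bigl([\lambda_i]^+ - [-\lambda_i]^+\bigr) c_i. \notag
\end{equation}
Then I would use the elementary real-number identity $[\lambda]^+ - [-\lambda]^+ = \lambda$ (valid for every $\lambda \in \mathbb{R}$, since one of the two brackets is $0$ and the other is $|\lambda|$ with the correct sign) to conclude that the right-hand side equals $\sum_{i=1}^r \lambda_i c_i = x$.

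There is no real obstacle here; the only thing that needs care is the claim that $-x$ admits a spectral decomposition on the same Jordan frame, but this is immediate from linearity of the sum $\sum_i \lambda_i c_i$ in the eigenvalues (no re-diagonalization is required, since the idempotents $c_i$ do not depend on $x$ once they are fixed). The identity $[\lambda]^+ - [-\lambda]^+ = \lambda$ is trivial by case analysis on the sign of $\lambda$, so the whole argument is essentially a one-line consequence of Propositions \ref{prop:spectral} and \ref{prop:lambda-Jordan} together with the explicit description of the projection onto $\mathcal{K}$.
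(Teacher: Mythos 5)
Your proof is correct and follows essentially the same route as the paper: both fix a spectral decomposition $x = \sum_i \lambda_i c_i$, apply the explicit formula $P_{\mathcal{K}}(\cdot) = \sum_i [\cdot]^+ c_i$ to $x$ and $-x$ on the shared Jordan frame, and conclude termwise. The only cosmetic difference is that the paper partitions the indices by the sign of $\lambda_i$ rather than invoking the scalar identity $[\lambda]^+ - [-\lambda]^+ = \lambda$, but these are the same calculation.
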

\begin{proof}
From Propositoin \ref{prop:spectral}, let $x$ be given as $x = \sum_{i=1}^r \lambda_i c_i$. 
Let $I_1$ be the set of indices such that $\lambda_i \geq 0$ and $I_2$ be the set of indices such that $\lambda_i < 0$. Then, we have
$P_\mathcal{K} (x) = \sum_{i \in I_1} \lambda_i c_i$ and $P_\mathcal{K} (-x) =  \sum_{i \in I_2} - \lambda_i c_i$,
which implies that $x = \sum_{i \in I_1} \lambda_i c_i + \sum_{i \in I_2} \lambda_i c_i = P_\mathcal{K} (x) - P_\mathcal{K} (-x)$.
\end{proof}

A Euclidean Jordan algebra $(\mathbb{E} , \circ)$ is called {\em simple} if it cannot be written as any Cartesian product of non-zero Euclidean Jordan algebras.
If the Euclidean Jordan algebra $(\mathbb{E} , \circ)$ associated with a symmetric cone $\mathcal{K}$ is simple, then we say that $\mathcal{K}$ is {\em simple}.
In this paper, we will consider that $\mathcal{K}$ is given by a Cartesian product of $p$ simple symmetric cones $\mathcal{K}_\ell$,
$\mathcal{K} := \mathcal{K}_1 \times \dots \times \mathcal{K}_p$, whose rank and identity element are $r_\ell$ and $e_\ell$ $(\ell=1, \ldots, p)$.
The rank $r$ and the identity element of  $\mathcal{K}$ are given by
\begin{equation}
\label{eq:r and e}
r = \sum_{\ell=1}^p r_\ell, \ \ e = (e_1 , \dots , e_p).
\end{equation}
In what follows, $x_\ell$ stands for the $\ell$-th block element of  $x \in \mathcal{K}$, i.e., $x = (x_1, \dots, x_p) \in \mathcal{K}_1 \times \dots \times \mathcal{K}_p$.
For each $\ell=1, \cdots, p$, we define $\lambda_{\min}(x_\ell) := \min\{ \lambda_1, \cdots, \lambda_{r_\ell} \}$ where $\lambda_1, \cdots,  \lambda_{r_\ell}$ are eigenvalues of $x_\ell$.
The minimum eigenvalue $\lambda_{\min}(x)$ of $x \in \mathcal{K}$ is given by $\lambda_{\min}(x) = \mbox{min} \{ \lambda_{\min}(x_1), \cdots, \lambda_{\min}(x_p) \}$.

Next, we consider the {\em quadratic representation}  $Q_v(x)$ defined by $Q_v(x) := 2 v \circ ( v \circ x ) - v^2 \circ x$.
For the cone $\mathcal{K} = \mathcal{K}_1 \times \dots \times \mathcal{K}_p$, the quadratic representation $Q_v(x)$ of $x \in \mathcal{K}$ is denoted by $Q_v(x) = \left(Q_{v_1} (x_1) , \dots , Q_{v_p}(x_p) \right)$.
Letting $I_\ell$ be the identity operator of the Euclidean Jordan algebra $(\mathbb{E}_\ell, \circ_\ell)$ associated with the cone $\mathcal{K}_\ell$, we have $Q_{e_\ell} = I_\ell$ for $\ell=1, \ldots, p$.
The following properties can also be retrieved from the results in \cite{Faraut1994} as in Proposition 3 of \cite{Bruno2019}:

\begin{proposition}
\label{ptop:quadratic}
For any  $v \in {\rm int}\mathcal{K}$, $ Q_v (\mathcal{K}) = \mathcal{K}$.
\end{proposition}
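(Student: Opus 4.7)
The plan is to combine three standard facts about the quadratic representation: its block‐diagonal behaviour on the Cartesian product, its invertibility on $\mathrm{int}\,\mathcal{K}$, and a continuity/connectedness argument along a path from the identity to $v$. The goal is to prove both inclusions $Q_v(\mathcal{K})\subseteq\mathcal{K}$ and $\mathcal{K}\subseteq Q_v(\mathcal{K})$.

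First I would reduce to the simple case. Since $Q_v(x) = (Q_{v_1}(x_1),\dots,Q_{v_p}(x_p))$ and $v \in \mathrm{int}\,\mathcal{K}$ iff each $v_\ell \in \mathrm{int}\,\mathcal{K}_\ell$, it suffices to prove $Q_{v_\ell}(\mathcal{K}_\ell)=\mathcal{K}_\ell$ for every $\ell$, so I may assume $\mathcal{K}$ is simple. I would then record two ingredients for $v \in \mathrm{int}\,\mathcal{K}$: (a) the spectral theorem (Proposition \ref{prop:spectral}) gives $v = \sum_i \lambda_i c_i$ with all $\lambda_i>0$, so $v^{-1}=\sum_i \lambda_i^{-1} c_i$ is well defined and again in $\mathrm{int}\,\mathcal{K}$; (b) the identity $Q_v\,Q_{v^{-1}} = \mathrm{id}$, which follows by a direct computation in the spectral basis (or may be cited from Faraut--Korányi). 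In particular $Q_v$ is a linear bijection on $\mathbb{E}$.

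Next I would prove the forward inclusion $Q_v(\mathcal{K}) \subseteq \mathcal{K}$ by a connectedness argument. Consider the path $v(t) := (1-t)e + tv$ for $t \in [0,1]$. Since $\mathrm{int}\,\mathcal{K}$ is convex and contains both $e$ and $v$, we have $v(t)\in\mathrm{int}\,\mathcal{K}$ for all $t$, so by step one each $Q_{v(t)}$ is invertible. Fix $x \in \mathrm{int}\,\mathcal{K}$ and define $f(t):=Q_{v(t)}(x)$. Then $f$ is continuous in $t$, $f(0)=Q_e(x)=x\in\mathrm{int}\,\mathcal{K}$, and $f(t) \neq 0$ for all $t$ by invertibility of $Q_{v(t)}$ and $x\neq 0$. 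If $f(1)=Q_v(x)\notin\mathrm{int}\,\mathcal{K}$, then by continuity of the minimum eigenvalue (via Proposition \ref{prop:lambda-Jordan}), there exists $t^*\in(0,1]$ with $f(t^*)\in\partial\mathcal{K}$, i.e.\ $\lambda_{\min}(f(t^*))=0$, so $\det(f(t^*))=0$, contradicting $f(t^*)\neq 0$ together with $Q_{v(t^*)}$ being invertible (which forces $f(t^*)$ to have nonzero determinant; this is the step where the determinantal identity $\det(Q_w(x)) = \det(w)^2\det(x)$ would be invoked if needed). Hence $Q_v(\mathrm{int}\,\mathcal{K})\subseteq\mathrm{int}\,\mathcal{K}$, and by taking limits (approximating any $x\in\mathcal{K}$ by $x+\varepsilon e\in\mathrm{int}\,\mathcal{K}$ and using continuity of $Q_v$ and closedness of $\mathcal{K}$) I get $Q_v(\mathcal{K})\subseteq\mathcal{K}$.

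Finally, for $\mathcal{K}\subseteq Q_v(\mathcal{K})$, take any $y\in\mathcal{K}$ and set $x := Q_{v^{-1}}(y)$. Since $v^{-1}\in\mathrm{int}\,\mathcal{K}$, the previous paragraph applied to $v^{-1}$ gives $x\in\mathcal{K}$, and $Q_v(x) = Q_v Q_{v^{-1}}(y) = y$, proving the reverse inclusion. The main technical obstacle is cleanly justifying that along the continuous path $v(t)$ the image $Q_{v(t)}(x)$ cannot cross $\partial\mathcal{K}$; the cleanest substitute is the determinantal identity above, which itself may need to be borrowed from Faraut--Korányi, but it can be verified directly in the spectral basis of $v$ by writing $Q_v$ in the Peirce decomposition. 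Everything else is a routine assembly.
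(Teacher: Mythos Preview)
The paper does not actually prove this proposition; it simply states that it ``can be retrieved from the results in \cite{Faraut1994} as in Proposition~3 of \cite{Bruno2019}.'' Your connectedness argument along the segment $v(t)=(1-t)e+tv$ is correct and is in fact the standard route in Faraut--Kor\'anyi, where $\mathrm{int}\,\mathcal{K}$ is characterised as the connected component of $e$ in the set of invertible elements and one checks that $Q_v$ preserves invertibility and hence that component.

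Two small points of hygiene. First, you write that invertibility of $Q_{v(t^*)}$ ``forces $f(t^*)$ to have nonzero determinant,'' but linear invertibility of $Q_{v(t^*)}$ on $\mathbb{E}$ only gives $f(t^*)\neq 0$; a nonzero element of $\partial\mathcal{K}$ can perfectly well have $\det=0$. The contradiction comes \emph{only} from the determinantal identity $\det(Q_w(x))=\det(w)^2\det(x)$ you then invoke, so that step is not optional---and in this paper that identity is Proposition~\ref{q-det-relation}(1), stated \emph{after} the proposition you are proving, so a self-contained write-up would need to reorder or prove it independently. Second, the inverse formula $Q_vQ_{v^{-1}}=\mathrm{id}$ is not quite ``a direct computation in the spectral basis,'' since $Q_v$ does not act diagonally on the off-diagonal Peirce spaces $\mathbb{E}_{ij}$; it is cleanest simply to cite it (e.g.\ from Proposition~II.3.1 of \cite{Faraut1994}), as you yourself suggest.
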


It is also known that the following relations hold for the quadratic representation $Q_v$ and $\det (\cdot)$ \cite{Faraut1994}.
\begin{proposition}[cf. Proposition II.3.3 and III.4.2-(i), \cite{Faraut1994}] 
\label{q-det-relation}
For any $v,x \in \mathbb{E}$,
\begin{enumerate}
\item $\det Q_v(x) = \det (v)^2 \det (x)$,
\item $Q_{Q_v(x)} = Q_v Q_x Q_v$ (i.e., if $x =e$ then $Q_{v^2} = Q_v Q_v$) .
\end{enumerate}
\end{proposition}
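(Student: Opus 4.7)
The plan is to follow the classical development in Faraut and Koranyi, since both identities are foundational. The cleanest route is via the Peirce decomposition for (1) and via Macdonald's theorem (specialization to an associative Jordan algebra) for (2). Note that the trivial case $v=e$ is immediate from the definition: expanding $Q_v(x) = 2v \circ (v \circ x) - v^2 \circ x$ with $v=e$ gives $Q_e(x) = 2x - x = x$, so both sides of (1) reduce to $\det(x)$, and both sides of (2) with $v=e$ reduce to $Q_x$. So the work lies in passing from this base case to general $v$.

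For part (1), I would invoke the spectral theorem (Proposition \ref{prop:spectral}) to write $v = \sum_{j=1}^r \mu_j c_j$ in a Jordan frame $\{c_1,\ldots,c_r\}$. The Peirce decomposition associated with $\{c_j\}$ writes $\mathbb{E} = \bigoplus_{i \le j} \mathbb{E}_{ij}$, and a standard computation shows that $L(v)$ acts by known multiples of $\mu_i$ and $\mu_j$ on each subspace $\mathbb{E}_{ij}$. From this one computes $Q_v$ blockwise, so that, as a linear operator on $\mathbb{E}$, its determinant factors as a product of contributions $\mu_i^2$ on $\mathbb{E}_{ii}$ and $(\mu_i \mu_j)^{d_{ij}}$ on $\mathbb{E}_{ij}$, where $d_{ij}$ are the Peirce multiplicities. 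Combining with the standard counting $\dim \mathbb{E} = r + \sum_{i<j} d_{ij}$ reduces the identity for $x=e$ to bookkeeping with the $\mu_j$. For general $x$, one can either use the dense open set of invertible $v$, on which Proposition \ref{ptop:quadratic} and the relation $Q_v Q_{v^{-1}} = I$ allow reduction to the case $v=e$, and then extend by polynomial continuity since both sides of the identity are polynomials in the coordinates of $v$ and $x$.

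For part (2), the ``fundamental formula,'' the most transparent approach is Macdonald's theorem: any polynomial identity in at most three variables that holds in every special Jordan algebra (i.e., the symmetrization of an associative algebra under $a \circ b = \tfrac{1}{2}(ab+ba)$) also holds in every Jordan algebra. Since $Q_{Q_v(x)} = Q_v Q_x Q_v$ is a three-variable identity (with $y$ absorbed as the argument of both operators), it suffices to verify it in the associative case, where a direct calculation gives $Q_v(x) = vxv$. Then $Q_{Q_v(x)}(y) = (vxv)\,y\,(vxv) = v\bigl(x(vyv)x\bigr)v = Q_v\bigl(Q_x(Q_v(y))\bigr)$, which establishes the identity. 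The parenthetical consequence $Q_{v^2} = Q_v Q_v$ follows by setting $x = e$ and using $Q_v(e) = v^2$.

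The main obstacle is giving an elementary proof of the fundamental formula directly from the three Jordan axioms without invoking Macdonald's theorem; the combinatorial expansion of $Q_{Q_v(x)}$ in terms of $L(v)$, $L(x)$, and their compositions via the Jordan identity is quite intricate. For this reason I would cite Macdonald's theorem. A secondary subtlety in part (1) is handling the Peirce multiplicities carefully so that the exponent of each $\mu_j$ in $\det Q_v(\cdot)$ is exactly two; this is where the symmetry of the Peirce decomposition and the self-adjointness of $L(v)$ (which follows from the inner-product axiom $\langle x \circ y, z\rangle = \langle y, x \circ z\rangle$) play a key role.
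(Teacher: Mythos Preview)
The paper does not prove this proposition; it is quoted directly from Faraut--Kor\'anyi and used as a black box. So there is no paper proof to compare against, and I assess your outline on its own terms.

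For part (2), invoking Macdonald's theorem is the right idea, but your statement of it is too strong: it is \emph{not} true that every three-variable identity valid in all special Jordan algebras holds in all Jordan algebras---Glennie's identity is the standard counterexample. The correct version of Macdonald's theorem requires the identity to be linear in one of the three variables. The fundamental formula $Q_{Q_v(x)}(y) = Q_v Q_x Q_v(y)$ is indeed linear in $y$, so Macdonald does apply once you state it correctly; just make that hypothesis explicit.

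For part (1), there is a genuine confusion. You compute ``the determinant of $Q_v$ as a linear operator on $\mathbb{E}$'' via the Peirce blocks, but the identity is about the \emph{Jordan} determinant $\det(Q_v(x)) = \prod_i \lambda_i(Q_v(x))$ of the element $Q_v(x)\in\mathbb{E}$, not the operator determinant $\mathrm{Det}(Q_v)$ of the linear map. These are different objects (one has degree $r$ in $x$, the other does not involve $x$ at all), and you have not explained the passage between them. Moreover, the case $x=e$ is trivial from the spectral theorem alone ($\det(Q_v(e))=\det(v^2)=\prod\mu_j^2$) and needs no Peirce bookkeeping. Finally, ``$Q_vQ_{v^{-1}}=I$ reduces to $v=e$'' is not a valid step as written: that relation only says $Q_v$ is bijective, not how the Jordan determinant transforms. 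A correct argument (essentially the one in Faraut--Kor\'anyi) observes that for invertible $v$ the polynomial $x\mapsto\det(Q_v(x))$ has degree $r$ and vanishes exactly on the non-invertible locus; since $\det$ is irreducible in a simple Euclidean Jordan algebra, $\det(Q_v(x)) = c(v)\det(x)$ for some scalar, and evaluating at $x=e$ gives $c(v)=\det(v)^2$. Polynomial continuity in $v$ then removes the invertibility hypothesis. Your Peirce approach can be made to work, but only after you establish that the Jordan determinant, written in Peirce coordinates, is homogeneous of total weight two in each $\mu_j$ under the rescaling $x_{ij}\mapsto\mu_i\mu_j x_{ij}$; that is the substantive step, not ``bookkeeping.''
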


More detailed descriptions, including concrete examples of symmetric cone optimization, can be found in, e.g., \cite{Faraut1994,Faybusovich1997,Schmieta2003,Alizadeh2012}. 
Here, we will use concrete examples of symmetric cones to explain the biliniear operation, the identity element, the inner product, the eigenvalues, the primitive idempotents, the projection on the symmetric cone and the quadratic representation on the cone.

\begin{example}[$\mathcal{K}$ is the semidefinite cone $\mathbb{S}^n_+$]
{\rm 
Let $\mathbb{S}^n$ be the set of symmetric matrices of $n \times n$.The semidefinite cone $\mathbb{S}^n_+$ is given by $\mathbb{S}^n_+ = \{ X \in \mathbb{S}^n : X \succeq O \}$.
For any symmetric matrices $X , Y \in \mathbb{S}^n$, define the bilinear operation $\circ$ and inner product as $X \circ Y = \frac{ XY + YX } {2}$ and $\langle X , Y \rangle  = \mbox{tr}(XY) = \sum_{i=1}^n \sum_{j=1}^n X_{ij} Y_{ij}$, respectively.
For any $X \in \mathbb{S}^n$, perform the eigenvalue decomposition and let $u_1 , \dots , u_n$ be the corresponding normalized eigenvectors for the eigenvalues $\lambda_1 , \dots , \lambda_n$: $X = \sum_{i=1}^n \lambda_i u_i u_i^T$.
The eigenvalues of $X$ in the Jordan algebra are $\lambda_1 , \dots , \lambda_n$ and the primitive idempotents are $c_1 = u_1 u_1^T , \dots , c_n = u_n u_n^T$, which implies that the rank of the semidefinite cone $\mathbb{S}^n_+$ is $r=n$.
The identity element is the identity matrix $I$ and the projection onto $\mathbb{S}^n_+$ is given by $P_{\mathbb{S}^n_+}(X) = \sum_{i=1}^n [ \lambda_i ]^+ u_i u_i^T$.
The quadratic representation of $V \in \mathbb{S}^n$ is given by $Q_V(X) = V X V$.
}
\end{example}
\begin{example}[$\mathcal{K}$ is the second-order cone $\mathbb{L}_n$]
{\rm 
The second order cone is given by $\mathbb{L}_n = \{ ( x_1, \bm{\bar{x}}^\top )^\top \in \mathbb{R}^n : x_1 \geq \| \bm{\bar{x}} \|_2\}$.
For any $x , y \in \mathbb{R}^n$, define the bilinear operation $\circ$ and the inner product as 
$x \circ y = ( x^\top y,  ( x_1\bm{\bar{y}}  + y_1\bm{\bar{x}})^\top )^\top$ and $\langle x , y \rangle = 2 \sum_{i=1}^n x_i y_i$, respectively.
For any $x  \in \mathbb{R}^n$, by the decomposition 
\begin{equation}
x = \left( x_1 + \| \bm{\bar{x}}  \|_2 \right) \begin{pmatrix} 1/2 \\ \frac{\bm{\bar{x}}}{2 \| \bm{\bar{x}} \|_2} \end{pmatrix}  + \left(  x_1 - \| \bm{\bar{x}} \|_2  \right) \begin{pmatrix} 1/2 \\ -\frac{\bm{\bar{x}}}{2 \| \bm{\bar{x}} \|_2} \end{pmatrix} , \notag
\end{equation}
we obtain the eigenvalues and the primitive idempotents as follows:\begin{equation}
\lambda_1 =  x_1 + \| \bm{\bar{x}}  \|_2  \ \ , \ \ \lambda_2 =x_1 - \| \bm{\bar{x}} \|_2  , \notag
\end{equation}
\begin{equation}
c_1 =
\begin{cases}
\frac{1}{2} (1, \frac{\bm{\bar{x}^\top}}{\| \bm{\bar{x}}\|_2} )^\top & \|\bm{\bar{x}}\|_2 \neq 0 \\
\\
\frac{1}{2} (1, z^\top )^\top & \|\bm{\bar{x}}\|_2 = 0 
\end{cases}
 \ \ ,\ \ 
c_2 = 
\begin{cases}
\frac{1}{2} (1, -\frac{\bm{\bar{x}^\top}}{\| \bm{\bar{x}} \|_2})^\top & \|\bm{\bar{x}}\|_2 \neq 0 \\
\\
\frac{1}{2} (1, -z^\top)^\top & \|\bm{\bar{x}}\|_2 = 0 
\end{cases}
 \ \ .\ \ 
\notag
\end{equation}
where $z \in \mathbb{R}^{n-1}$ is an arbitrary vector satisfying $\|z\|_2=1$.
The above implies that the rank of the second-order cone $\mathbb{L}_n$ is $r=2$.
The identity element is given by 
$e = (1, \bm{0}^\top )^\top \in \mathbb{R}^n$.
The projection $P_{\mathbb{L}_n}(x)$ onto $\mathbb{L}_n$ is given by
\begin{equation}
P_{\mathbb{L}_n}(x) = \left[ x_1 + \| \bm{\bar{x}}  \|_2 \right]^+ \begin{pmatrix} 1/2 \\ \frac{\bm{\bar{x}}}{2 \| \bm{\bar{x}} \|_2} \end{pmatrix}  + \left[  x_1 - \| \bm{\bar{x}} \|_2  \right]^+ \begin{pmatrix} 1/2 \\ -\frac{\bm{\bar{x}}}{2 \| \bm{\bar{x}} \|_2} \end{pmatrix} .
\notag
\end{equation}
Letting $I_{n-1}$ be the identity matrix of order $n-1$, the quadratic representation $Q_v(\cdot)$ of $v \in \mathbb{R}^n$ is as follows:
\begin{equation}
Q_v(x) = 
\begin{pmatrix}
\| v \|_2^2 & 2 v_1 \bm{\bar{v}}^T \\
2 v_1 \bm{\bar{v}} & \mbox{det}v I_{n-1} + 2 \bm{\bar{v}} \bm{\bar{v}}^T
\end{pmatrix} x.
\notag
\end{equation}
}
\end{example}

\subsection{Notation}
\label{sec: notation}
This subsection summarizes the \modifyFirst{notations} used in this paper.
For any $x,y \in \mathbb{E}$, we define the inner product  $\langle \cdot,\cdot \rangle$ and the norm $\|\cdot\|_{J}$ as $\langle x , y \rangle := {\rm trace}(x \circ y)$ and $\| x \|_J := \sqrt{ \langle x , x \rangle }$, respectively.
For any $x \in \mathbb{E}$ having decomposition  $x = \sum_{i=1}^r \lambda_i c_i$ as in  Proposition \ref{prop:spectral}, we also define $\| x \|_1 := |\lambda_1| + \dots + |\lambda_r|$, $\|x\|_\infty := \max \{|\lambda_1| , \dots , |\lambda_r| \}$.
For $x \in \mathcal{K}$, we obtain the following equivalent representations: $\|x\|_1 = \langle e , x \rangle$, $\|x\|_\infty = \lambda_{\max} (x)$.
The following is a list of other definitions and frequently used symbols in the paper.
\begin{itemize}
\item $d$: the dimension of the Euclidean space $\mathbb{E}$ corresponding to $\mathcal{K}$,
\item $F_{{\rm P}_{S_{\infty}}(\mathcal{A})}$: the feasible region of ${\rm P}_{S_{\infty}}(\mathcal{A})$,
\item $P_{\mathcal{A}} (\cdot) $: the projection map onto $\mbox{ker} \hspace{0.3mm} \mathcal{A}$,
\item $\mathcal{P}_\mathcal{K} (\cdot) $: the projection map onto $\mathcal{K}$,
\item $\lambda(x) \in \mathbb{R}^r$: an $r$-dimensional vector composed of the eigenvalues of $ x \in \mathcal{K}$,
\item $\lambda(x_\ell) \in \mathbb{R}^{r_\ell}$: an $r_\ell$-dimensional vector composed of the eigenvalues of $x_\ell \in \mathcal{K}_\ell$ ($\ell =1,\ldots,p$),
\item $c(x_\ell)_i \in \mathcal{K}_\ell$: the $i$-th primitive idempotent of $x_\ell \in \mathbb{E}_\ell$. When $\mathcal{K}$ is simple, it is abbreviated as $c_i$.
\item $\left[ \cdot \right]^+$: the projection map onto the nonnegative orthant, and
\item $\mathcal{A}^*(\cdot)$: the adjoint operator of the linear operator $\mathcal{A}(\cdot)$, i.e., $\langle \mathcal{A}(x) , y \rangle = \left \langle x , \mathcal{A}^* (y) \right\rangle$ for all $x \in \mathcal{K}$ and $y \in \mathbb{R}^m$.
\end{itemize}

\section{Extension of Roos's method to the symmetric cone problem}
\label{sec: extension}
\subsection{Outline of the extended method}
We focus on the feasibility of the following problem ${\rm P}_{S_{\infty}}(\mathcal{A})$, which is equivalent to  ${\rm P}(\mathcal{A})$: 
\begin{equation}
\begin{array}{lllllllll}
{\rm P}_{S_{\infty}}(\mathcal{A})&	&\mbox{find}   &x \in \mbox{int} \hspace{0.75mm} \mathcal{K}	&	&\mbox{s.t.}	&\mathcal{A} (x) = \bm{0},	&\|x\|_\infty \leq 1.
\end{array}
\notag
\end{equation}
The alternative problem ${\rm D}(\mathcal{A})$ of ${\rm P}(\mathcal{A})$ is 
\begin{equation}
\notag
\begin{array}{llllllllll}
{\rm D}(\mathcal{A})	&	&\mbox{find}   &y \in \mathcal{K}	&	&\mbox{s.t.}	&y \in \mbox{range} \hspace{0.3mm} \mathcal{A}^*,	&y \neq \bm{0},
\end{array}
\end{equation}
where  $\mbox{range} \hspace{0.3mm} \mathcal{A}^*$ is the orthogonal complement of $\mbox{ker} \hspace{0.3mm} \mathcal{A}$.
As we mentioned in section \ref{subsec:symmetric cone}, 
we assume that $\mathcal{K}$ is given by a Cartesian product of $p$ simple symmetric cones $\mathcal{K}_\ell (\ell=1, \ldots, p)$, i.e., $\mathcal{K} = \mathcal{K}_1 \times \dots \times \mathcal{K}_p $.
In our method, the upper bound for the sum of eigenvalues of a feasible solution of ${\rm P}_{S_{\infty}}(\mathcal{A})$ plays a key role, whereas the existing work focuses on the volume of the set of the feasible region \cite{Bruno2019} or the condition number of a feasible solution~\cite{Pena2017}.
Before describing the theoretical results, let us outline the proposed algorithm when $\mathcal{K}$ is simple. The algorithm repeats two steps: 
(i). find a cut for ${\rm P}_{S_{\infty}}(\mathcal{A})$,  (ii) scale the problem to an isomorphic problem equivalent to ${\rm P}_{S_{\infty}}(\mathcal{A})$ such that the region narrowed by the cut is efficiently explored.
Given a feasible solution $x$ of ${\rm P}_{S_{\infty}}(\mathcal{A})$ and a constant $0<\xi<1$, our method first searches for a Jordan frame $\{ c_1 , \dots, c_r \}$ such that the following is satisfied:
\begin{equation}
\notag
\langle c_i , x \rangle  \leq  \xi  \  ( i \in H), \ \  \langle c_i , x \rangle \leq 1 \ ( i \notin H),
\end{equation}
where $H \subseteq \{1, \dots,r\}$ and $|H|>0$.
In this case, instead of ${\rm P}_{S_{\infty}}(\mathcal{A})$, we may consider ${\rm P}^{\rm Cut}_{S_{\infty}}(\mathcal{A})$ as follows:
\begin{equation}
\begin{array}{lllllllllllllll}
{\rm P}^{\rm Cut}_{S_{\infty}}(\mathcal{A})	&
	&\mbox{find}	&x \in \mbox{int} \hspace{0.75mm} \mathcal{K}
	&&\mbox{s.t.}	&\langle c_i , x \rangle  \leq  \xi  \  ( i \in H), \ \  \langle c_i , x \rangle \leq 1 \ ( i \notin H), \\
	&&&&
	&&\mathcal{A} (x) = \bm{0}, \ \|x\|_\infty \leq 1.
\end{array}
\notag
\end{equation}
Here, we define the set $SR^{\rm Cut} = \{x \in \mathbb{E} : x \in \mbox{int} \hspace{0.75mm} \mathcal{K}, \ \|x\|_\infty \leq 1, \ \langle c_i, x \rangle \leq \xi \ (i \in H), \ \langle c_i, x \rangle \leq 1 \  i \notin H) \}$ as the search range for the solutions of the problem ${\rm P}^{\rm Cut}_{S_{\infty}}(\mathcal{A})$.
The proposed method then creates a problem equivalent and isomorphic to ${\rm P}_{S_{\infty}}(\mathcal{A})$ such that $SR^{\rm Cut}$, the region narrowed by the cut, can be searched efficiently.
Such a problem is obtained as follows:
\begin{equation}
\begin{array}{llllllll}
{\rm P}_{S_{\infty}}(\mathcal{A}Q_g)	&	&\mbox{find}   &\bar{x} \in \mbox{int} \hspace{0.75mm} \mathcal{K}	&	&\mbox{s.t.} &  \mathcal{A}Q_g (\bar{x}) = \bm{0},	&\|\bar{x}\|_\infty \leq 1, \notag
\end{array}
\end{equation}
where $g$ is given by $g = \sqrt{\xi} \sum_{i \in H} c_i + \sum_{i \notin H} c_i \in {\rm int} \hspace{0.75mm} \mathcal{K}$ for which $e = Q_{g^{-1}} (u)$ holds for $u = \sum_{i \in H} \xi c_i + \sum_{i \notin H} c_i$.

In the succeeding sections, we explain how the cut for ${\rm P}_{S_{\infty}}(\mathcal{A})$ is obtained from some $v \in \mbox{range} \hspace{0.3mm} \mathcal{A}^*$; we also explain the scaling method for the problem in detail.
To simplify our discussion, we will assume that $\mathcal {K}$ is simple, i.e., $p=1$, in section \ref{subsec:simple}. Then, in section \ref{subsec:non-simple}, we will generalize our discussion to the case of $p \geq 2$.

\subsection{Simple symmetric cone case}
\label{subsec:simple}

Let us consider the case where $\mathcal {K}$ is simple. It is obvious that, for any feasible solution $x$ of ${\rm P}_{S_{\infty}}(\mathcal{A})$, the constraint $\|x\|_\infty \leq 1$ implies that $\langle e , x\rangle \leq r$, since $x \in \mathcal{K}$. In Proposition \ref{prop:upper-bound}, we show that this bound may be improved as $\langle e , x\rangle <r$ by using a point $v \in \mbox{range} \hspace{0.3mm} \mathcal{A}^*\setminus \{0\}$. To prove Proposition \ref{prop:upper-bound}, we need the following Lemma \ref{lemma:0} and Proposition \ref{prop:p-d-relation}.
\begin{lemma}
\label{lemma:0}
Let $(\mathbb{E}, \circ)$ be a Euclidean Jordan algebra with the \modifySecond{associated} symmetric cone $\mathcal{K}$.
For any $y \in \mathbb{E}$, the following equation holds:
\begin{equation}
\max_{\bm{0} \leq \lambda(x) \leq \bm{1}} \langle y ,x \rangle  =  \left \langle \mathcal{P}_\mathcal{K} \left( y \right)  ,e \right \rangle . \notag
\end{equation}
\end{lemma}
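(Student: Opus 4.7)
The plan is to prove the two sides of the claimed equality: first, $\langle y, x \rangle \leq \langle \mathcal{P}_\mathcal{K}(y), e \rangle$ for every feasible $x$, and then exhibit an $x$ attaining the bound.

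For the upper bound, I would start from Lemma \ref{lemma:-1} to write $y = \mathcal{P}_\mathcal{K}(y) - \mathcal{P}_\mathcal{K}(-y)$ and split
$$\langle y, x \rangle = \langle \mathcal{P}_\mathcal{K}(y), x \rangle - \langle \mathcal{P}_\mathcal{K}(-y), x \rangle.$$
The key observation is that the constraint $\bm{0} \leq \lambda(x) \leq \bm{1}$ is, by Proposition \ref{prop:lambda-Jordan}, equivalent to $x \in \mathcal{K}$ together with $e - x \in \mathcal{K}$. Since $\mathcal{P}_\mathcal{K}(-y) \in \mathcal{K}$ and $\mathcal{K}$ is self-dual, the second term satisfies $\langle \mathcal{P}_\mathcal{K}(-y), x \rangle \geq 0$; similarly $\langle \mathcal{P}_\mathcal{K}(y), e - x \rangle \geq 0$ since $\mathcal{P}_\mathcal{K}(y) \in \mathcal{K}$ and $e - x \in \mathcal{K}$. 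Combining these two nonnegativity facts immediately yields $\langle y, x \rangle \leq \langle \mathcal{P}_\mathcal{K}(y), e \rangle$ for every feasible $x$.

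For achievability, I would take the spectral decomposition $y = \sum_{i=1}^r \mu_i d_i$ provided by Proposition \ref{prop:spectral} and set $x^\ast := \sum_{i:\, \mu_i \geq 0} d_i$. Then $x^\ast$ has eigenvalues in $\{0,1\}$ with respect to the Jordan frame $\{d_1, \dots, d_r\}$ and hence is feasible. Using the Jordan-frame relations $d_i \circ d_j = 0$ for $i \neq j$, we get $\langle d_i, d_j \rangle = {\rm trace}(d_i \circ d_j) = 0$ for $i \neq j$, so that $\langle y, x^\ast \rangle = \sum_{i=1}^r [\mu_i]^+ \langle d_i, d_i \rangle$. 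On the other hand, $\mathcal{P}_\mathcal{K}(y) = \sum_{i=1}^r [\mu_i]^+ d_i$ and $e = \sum_{j=1}^r d_j$ give $\langle \mathcal{P}_\mathcal{K}(y), e \rangle = \sum_{i=1}^r [\mu_i]^+ \langle d_i, d_i \rangle$, so equality is attained at $x^\ast$.

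The main obstacle, if any, is merely packaging: once one recognizes that $\lambda(x) \leq \bm{1}$ should be translated into the cone membership $e - x \in \mathcal{K}$, and that the off-diagonal inner products of primitive idempotents in a Jordan frame vanish, the proof reduces to two appeals to self-duality together with a short bookkeeping calculation, and no real technical difficulty arises.
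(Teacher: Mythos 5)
Your proof is correct. The achievability argument is essentially the paper's: both take the spectral decomposition of $y$, define $x^\ast$ as the sum of the idempotents with nonnegative eigenvalues, and compute. The difference is in the upper-bound part. The paper expands $\langle y, x \rangle = \sum_i \lambda_i \langle c_i, x \rangle$ along the Jordan frame of $y$ and bounds each coefficient $\langle c_i, x \rangle \in [0,1]$ (itself a consequence of $x, e-x \in \mathcal{K}$ and self-duality applied to the primitive idempotents), then argues the maximum is attained at the $\{0,1\}$-extreme values. You instead invoke Lemma \ref{lemma:-1} to split $y = \mathcal{P}_\mathcal{K}(y) - \mathcal{P}_\mathcal{K}(-y)$ and apply self-duality twice at the aggregate level: once to drop $\langle \mathcal{P}_\mathcal{K}(-y), x\rangle \geq 0$ and once to pass from $x$ to $e$. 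The two arguments are mathematically equivalent (since $\mathcal{P}_\mathcal{K}(y) = \sum_i [\lambda_i]^+ c_i$, your chain $\langle y,x\rangle \leq \langle \mathcal{P}_\mathcal{K}(y),x\rangle \leq \langle \mathcal{P}_\mathcal{K}(y),e\rangle$ is the paper's term-by-term estimate written coordinate-free), but your version is a bit cleaner: it avoids the intermediate step of verifying $0 \leq \langle c_i, x\rangle \leq 1$ for each index and makes the role of self-duality more transparent. Both are valid, and yours also establishes the upper bound for all feasible $x$ before exhibiting the maximizer, which is a slightly tighter logical structure than the paper's "if such an $x$ exists" phrasing.
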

\begin{proof}
Using the decomposition $y = \sum_{i=1}^r \lambda_i c_i$ obtained by Proposition \ref{prop:spectral}, we see that 
\begin{equation}
\max_{\bm{0} \leq \lambda(x) \leq \bm{1}} \langle y ,x \rangle  = \max_{\bm{0} \leq \lambda(x) \leq \bm{1}} \left \langle \sum_{i=1}^r \lambda_i c_i , x \right \rangle = \max_{\bm{0} \leq \lambda(x) \leq \bm{1}}  \sum_{i=1}^r \lambda_i  \left \langle c_i , x \right \rangle . \label{lemma:0-eq1}
\end{equation}
Noting that $x \in \mathcal{K}, e-x \in \mathcal{K}$ from $\bm{0} \leq \lambda(x) \leq \bm{1}$, since $c_i \in \mathcal{K}$ is primitive idempotent, we find that 
$\langle c_i,x \rangle \geq 0$ and $\langle c_i, e-x \rangle \geq 0$, which implies that $0 \leq \langle c_i,x \rangle \leq 1$.
Thus, letting $I_1$ be the set of indices for which $\lambda_i \leq 0$ and $I_2$ be the set of indices for which $\lambda_i > 0$, if there exists an $x$ satisfying 
\begin{equation}
\langle c_i,x \rangle =
\begin{cases}
0 & i \in I_1 \\
1 & i \in I_2
\end{cases}, 
\label{lemma:0-eq2}
\end{equation}
then such an $x$ is an optimal solution of (\ref{lemma:0-eq1}). In fact, if we define $x^* = \sum_{i \in I_2} c_i$, then by the dedfinition of the Jordan frame, $x^*$ satisfies (\ref{lemma:0-eq2}) and $\bm{0} \leq \lambda(x) \leq \bm{1}$ and becomes an optimal solution of (\ref{lemma:0-eq1}). In this case, the optimal value of (\ref{lemma:0-eq1}) turns out to be
\begin{equation}
\max_{\bm{0} \leq \lambda(x) \leq \bm{1}}  \sum_{i=1}^r \lambda_i  \left \langle c_i , x \right \rangle = \sum_{i=1}^r \lambda_i  \left \langle c_i , x^* \right \rangle 
= \sum_{i \in I_2} \lambda_i 
= \sum_{i=1}^r [\lambda_i]^+
= \langle \mathcal{P}_\mathcal{K} (y) , e \rangle. 
\notag
\end{equation}
\end{proof}

\begin{proposition}
\label{prop:p-d-relation} 
Let $(\mathbb{E}, \circ)$ be a Euclidean Jordan Algebra with the corresponding symmetric cone  $\mathcal{K}$.
For a given $c \in \mathbb{E}$, consider the problem
\begin{equation}
\notag
\begin{array}{llllll}
\mbox{\rm max}	&\langle c , x \rangle 	&	&\mbox{\rm s.t}	&\mathcal{A}(x) = \bm{0},	&\bm{0} \leq \lambda (x) \leq \bm{1}  \notag.
\end{array}
\end{equation}
The dual problem of the above is 
\begin{equation}
\notag
\begin{array}{lllll}
\mbox{\rm min}	&\left \langle \mathcal{P}_\mathcal{K} \left( c - u  \right) , e \right \rangle	&	&\mbox{\rm s.t}	&u \in \mbox{\rm range} \hspace{0.3mm} \mathcal{A}^*. \notag
\end{array}
\end{equation}
\end{proposition}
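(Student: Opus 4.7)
The plan is to derive the claimed dual by writing the primal as a linear conic program and applying Lagrangian duality, then identifying the inner minimization with $\langle \mathcal{P}_\mathcal{K}(c-u), e \rangle$ via Lemmas \ref{lemma:0} and \ref{lemma:-1}. The key observation enabling this is that the constraint $\bm{0} \leq \lambda(x) \leq \bm{1}$ is equivalent to the pair of conic constraints $x \in \mathcal{K}$ and $e - x \in \mathcal{K}$, so the primal is a standard linear conic problem.

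First I would introduce Lagrange multipliers $y \in \mathbb{R}^m$ for $\mathcal{A}(x) = \bm{0}$ and $s, w \in \mathcal{K}$ for $x \in \mathcal{K}$ and $e - x \in \mathcal{K}$, respectively, and form the Lagrangian
\begin{equation*}
L(x,y,s,w) = \langle c, x \rangle - \langle y, \mathcal{A}(x) \rangle + \langle s, x \rangle + \langle w, e - x \rangle
= \langle c - \mathcal{A}^*(y) + s - w, x \rangle + \langle w, e \rangle.
\end{equation*}
Taking the supremum over $x \in \mathbb{E}$ is $+\infty$ unless the coefficient of $x$ vanishes, yielding the Lagrangian dual
\begin{equation*}
\min \{ \langle w, e \rangle : w - s = c - \mathcal{A}^*(y), \ y \in \mathbb{R}^m, \ s,w \in \mathcal{K} \}.
\end{equation*}
Setting $u := \mathcal{A}^*(y)$, which ranges over $\mbox{range}\,\mathcal{A}^*$, and eliminating $s = w - (c-u)$ reduces the dual to $\min_{u \in \mbox{range}\,\mathcal{A}^*} \phi(u)$, where $\phi(u) := \min \{ \langle w, e \rangle : w \in \mathcal{K}, \ w - (c-u) \in \mathcal{K} \}$.

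The remaining task, and the main technical step, is to show $\phi(u) = \langle \mathcal{P}_\mathcal{K}(c-u), e \rangle$. For the upper bound I would take $w_0 := \mathcal{P}_\mathcal{K}(c-u)$; Lemma \ref{lemma:-1} applied to $c-u$ gives $w_0 - (c-u) = \mathcal{P}_\mathcal{K}(-(c-u)) \in \mathcal{K}$, so $w_0$ is feasible with objective value $\langle \mathcal{P}_\mathcal{K}(c-u), e \rangle$. For the matching lower bound, for any feasible $w$ and any $x$ satisfying $\bm{0} \leq \lambda(x) \leq \bm{1}$ I would use the two self-duality inequalities
\begin{equation*}
\langle c - u, x \rangle
= \langle w, x \rangle - \langle w - (c-u), x \rangle
\leq \langle w, x \rangle
= \langle w, e \rangle - \langle w, e - x \rangle
\leq \langle w, e \rangle,
\end{equation*}
where the inequalities use that $w, e-x, w-(c-u), x \in \mathcal{K}$ and that $\mathcal{K}$ is self-dual. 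Maximizing the left side over $x$ with $\bm{0} \leq \lambda(x) \leq \bm{1}$ and applying Lemma \ref{lemma:0} with $y = c - u$ yields $\langle \mathcal{P}_\mathcal{K}(c-u), e \rangle \leq \langle w, e \rangle$, establishing $\phi(u) \geq \langle \mathcal{P}_\mathcal{K}(c-u), e \rangle$.

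The main obstacle is the identification $\phi(u) = \langle \mathcal{P}_\mathcal{K}(c-u), e \rangle$, since minimizing trace over the ``shifted cone'' $\{w \in \mathcal{K} : w \succeq_\mathcal{K} c - u\}$ is not straightforward in a Jordan-algebraic setting; the combination of Lemmas \ref{lemma:-1} and \ref{lemma:0} is precisely what converts this into a tractable problem, with the projection $\mathcal{P}_\mathcal{K}(c-u)$ serving as the explicit minimizer. After this step, assembling the outer minimization over $u \in \mbox{range}\,\mathcal{A}^*$ immediately yields the stated dual.
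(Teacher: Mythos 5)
Your proof is correct, but it follows a genuinely different route from the paper. The paper dualizes only the equality constraint $\mathcal{A}(x)=\bm 0$ with a multiplier $w\in\mathbb{R}^m$, keeps $\bm 0\le\lambda(x)\le\bm 1$ inside the inner maximization, and then evaluates that inner maximum in one step via Lemma \ref{lemma:0}: $\max_{\bm 0\le\lambda(x)\le\bm 1}\langle c-\mathcal{A}^*(w),x\rangle = \langle\mathcal{P}_\mathcal{K}(c-\mathcal{A}^*(w)),e\rangle$. You instead dualize everything (the equality constraint and both cone constraints $x\in\mathcal{K}$, $e-x\in\mathcal{K}$), which produces the standard conic dual $\min\{\langle w,e\rangle : w\in\mathcal{K},\ w-(c-u)\in\mathcal{K},\ u\in\mathrm{range}\,\mathcal{A}^*\}$, and then you carry out a second, non-trivial step: showing the inner value $\phi(u)$ equals $\langle\mathcal{P}_\mathcal{K}(c-u),e\rangle$ by exhibiting $w_0=\mathcal{P}_\mathcal{K}(c-u)$ as the explicit minimizer (feasibility via Lemma \ref{lemma:-1}) and proving the matching lower bound via two self-duality inequalities plus Lemma \ref{lemma:0}. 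Your approach buys a cleaner connection to the standard form of conic duality and makes the optimal dual slack $\mathcal{P}_\mathcal{K}(-(c-u))$ explicit, at the cost of the extra identification step; the paper's approach is shorter because the partial dualization lets Lemma \ref{lemma:0} do all the work at once. Both arguments use Lemma \ref{lemma:0}, but you also rely on Lemma \ref{lemma:-1}, which the paper's proof of this proposition does not need. Both establish the same (weak-duality) content that the paper later uses in Proposition \ref{prop:upper-bound}.
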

\begin{proof}
Define the Lagrangian function $L(x,w)$ as $L(x,w) :=   \langle c , x \rangle -  w^\top \mathcal{A}(x)$ where $w \in \mathbb{R}^m$ is the Lagrange multiplier. 
\modifyThird{
Supoose that $x^*$ is an optimal sotution of the primal problem. 
Then, for any $w \in \mathbb{R}^m$,  we have $\langle c,x^* \rangle = L(w,x^*) \leq \max_{\bm{0} \leq \lambda(x) \leq \bm{1}} L(w,x)$, and hence,}
\begin{align*}
\modifyThird{\langle c, x^* \rangle} \leq \min_w \max_{\bm{0} \leq \lambda(x) \leq \bm{1}} L(x,w)
&=  \min_w \max_{\bm{0} \leq \lambda(x) \leq \bm{1}} \{  \langle c , x \rangle - \langle \mathcal{A}^*(w) ,x \rangle \}  \\
&=  \min_w \max_{\bm{0} \leq \lambda(x) \leq \bm{1}} \{  \langle c -  \mathcal{A}^*(w) ,x \rangle \}  \\
&=  \min_w   \left \langle \mathcal{P}_\mathcal{K} \left( c -  \mathcal{A}^*(w) \right)  ,e \right \rangle \ \ \ \mbox{(by  lemma \ref{lemma:0})}  \\
&= \min_{u \in \mbox{\footnotesize range} \mathcal{A}^*}  \langle \mathcal{P}_\mathcal{K} \left( c - u \right) , e \rangle.
\end{align*}
\end{proof}


\begin{proposition}
\label{prop:upper-bound}

Suppose that $v \in \mbox{\rm range} \hspace{0.3mm} \mathcal{A}^*$ is given by $v = \sum_{i=1}^r \lambda_i c_i$ as in Proposition \ref{prop:spectral}. 
For each $i \in \{ 1, \dots , r \}$ and $\alpha \in \mathbb{R}$, define $q_i(\alpha) := \left[    1-\alpha \lambda_i  \right]^+ + \sum_{j \neq i }^r \left[ -  \alpha \lambda_j  \right]^+$.
Then, the following relations hold for any $x \in F_{{\rm P}_{S_{\infty}}(\mathcal{A})}$ and $i \in \{ 1, \dots , r \}$\modifySecond{:}
\begin{equation}
\label{eq:bound_q}
\langle c_i , x \rangle \leq  
\min_{\alpha \in \mathbb{R}} q_i(\alpha) = 
\begin{cases}
\min \left\{  1 ,  \left \langle  e , \mathcal{P}_{\mathcal{K}} \left( -  \frac{1}{\lambda_i} v  \right) \right \rangle \right\} & \mbox{if $\lambda_i \neq 0$}, \\
1 & \mbox{if $\lambda_i = 0$}\modifySecond{.}
\end{cases}
\end{equation}
\end{proposition}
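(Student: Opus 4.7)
The plan is to use the weak-duality bound from Proposition \ref{prop:p-d-relation}, specialized to $c = c_i$, together with a one-parameter family of dual feasible points $u = \alpha v$ indexed by $\alpha \in \mathbb{R}$. Since $x \in F_{{\rm P}_{S_\infty}(\mathcal{A})}$ satisfies $\mathcal{A}(x) = \bm 0$ and $x \in \mathcal{K}$ with $\|x\|_\infty \leq 1$, we have $\bm{0} \leq \lambda(x) \leq \bm{1}$, so $\langle c_i, x\rangle$ is bounded above by the optimal value of the primal problem of Proposition \ref{prop:p-d-relation} with $c = c_i$. By weak duality, this value is bounded above by $\langle \mathcal{P}_\mathcal{K}(c_i - u), e\rangle$ for every $u \in \mathrm{range}\,\mathcal{A}^*$.

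The key computation is: because $v = \sum_{j=1}^r \lambda_j c_j$ is already expressed in the Jordan frame $\{c_1, \ldots, c_r\}$ that contains $c_i$, for any $\alpha \in \mathbb{R}$ the element $c_i - \alpha v = (1 - \alpha\lambda_i)c_i + \sum_{j \neq i}(-\alpha \lambda_j)c_j$ is itself a spectral decomposition in this frame. Therefore Proposition \ref{prop:lambda-Jordan} and the formula for $\mathcal{P}_\mathcal{K}$ right below it give
\begin{equation}
\mathcal{P}_\mathcal{K}(c_i - \alpha v) = [1-\alpha\lambda_i]^+ c_i + \sum_{j \neq i}[-\alpha \lambda_j]^+ c_j,\notag
\end{equation}
so $\langle \mathcal{P}_\mathcal{K}(c_i - \alpha v), e\rangle = q_i(\alpha)$. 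Since $\alpha v \in \mathrm{range}\,\mathcal{A}^*$, taking the infimum over $\alpha$ yields $\langle c_i, x\rangle \leq \min_{\alpha \in \mathbb{R}} q_i(\alpha)$, which is the first inequality in \eqref{eq:bound_q}.

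It remains to evaluate $\min_\alpha q_i(\alpha)$. When $\lambda_i = 0$, the first summand of $q_i(\alpha)$ is identically $1$ and the other summands are nonnegative, so $q_i(\alpha) \geq 1 = q_i(0)$ and the minimum is $1$. When $\lambda_i \neq 0$, direct substitution gives $q_i(0) = 1$ and $q_i(1/\lambda_i) = \sum_{j \neq i}[-\lambda_j/\lambda_i]^+ = \langle e, \mathcal{P}_\mathcal{K}(-v/\lambda_i)\rangle$ (here the $j=i$ contribution $[-1]^+ = 0$ is added for free). So both candidate values in the right-hand side of \eqref{eq:bound_q} are attained by $q_i$.

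The remaining point is that the minimum of $q_i$ over all $\alpha$ is actually realized at one of these two values. The function $q_i$ is a sum of compositions of $[\cdot]^+$ with affine functions of $\alpha$, hence convex and piecewise linear with breakpoints at $\alpha = 1/\lambda_i$ (from the first term) and at $\alpha = 0$ (from every term $[-\alpha\lambda_j]^+$ with $\lambda_j \neq 0$). The plan is a short case split on the sign of $\lambda_i$; in the case $\lambda_i > 0$, on the interval $(-\infty,0]$ the first term is $\geq 1$ and the rest are $\geq 0$, so $q_i \geq 1 = q_i(0)$; on $[1/\lambda_i,\infty)$ the first term vanishes and the remaining terms are nondecreasing in $\alpha$, so the minimum on this ray is $q_i(1/\lambda_i)$; and on $[0,1/\lambda_i]$, $q_i$ reduces to a single affine function of $\alpha$, whose minimum over the interval is attained at an endpoint. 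The case $\lambda_i < 0$ is symmetric (replace $\alpha$ by $-\alpha$). Combining the three intervals yields $\min_\alpha q_i(\alpha) = \min\{1,\langle e,\mathcal{P}_\mathcal{K}(-v/\lambda_i)\rangle\}$, completing the proof. The only step requiring any care is the piecewise-linear minimization in the last paragraph, but it is elementary.
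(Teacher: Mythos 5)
Your proof is correct and follows essentially the same route as the paper: apply Proposition \ref{prop:p-d-relation} with $c=c_i$ and $u=\alpha v$, observe that $c_i - \alpha v$ is already spectrally decomposed in the frame $\{c_1,\ldots,c_r\}$ so that $\langle \mathcal{P}_\mathcal{K}(c_i-\alpha v),e\rangle = q_i(\alpha)$, then minimize the convex piecewise-linear function $q_i$. The only difference is that you spell out the three-interval case analysis showing the minimum is attained at $\alpha=0$ or $\alpha=1/\lambda_i$, whereas the paper simply asserts this from convexity and piecewise linearity; your extra detail is harmless and makes the endpoint claim airtight.
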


\begin{proof}

For each $i \in \{ 1, 2, \dots , r \}$, we have 
\begin{equation}
\mathcal{P}_\mathcal{K} \left( c_i -  \alpha v \right) = \mathcal{P}_\mathcal{K} \left( c_i - \alpha \sum_{j=1}^r \lambda_j c_j \right) = \mathcal{P}_\mathcal{K} \left( (1-\alpha \lambda_i ) c_i -   \sum_{j \neq i }^r \alpha \lambda_j c_j  \right), \notag
\end{equation}
and hence, 
\begin{equation}
\label{eq:q_i(alpha)}
\left \langle \mathcal{P}_\mathcal{K} \left( c_i -  \alpha v \right) , e \right \rangle
=
\left \langle \mathcal{P}_\mathcal{K} \left(    (1-\alpha \lambda_i ) c_i -   \sum_{j \neq i }^r \alpha \lambda_j c_j   \right) , \sum_{i=1}^r c_i \right \rangle 
= \left[    1-\alpha \lambda_i  \right]^+ + \sum_{j \neq i }^r \left[ -  \alpha \lambda_j  \right]^+  = q_i(\alpha) .
\end{equation}

Note that, since $q_i(\alpha)$ is a piece-wise linear convex function, if $\lambda_i = 0$, it attains the minimum at $\alpha = 0$ with $q_i(0) = 1$, and if $\lambda_i \neq 0$, it attains the minimum at $\alpha = 0$ with $q_i(0) = 1$ or at $\alpha =  \frac{1}{\lambda_i}$ with
\begin{equation}
\notag
q \left( \frac{1}{\lambda_i} \right) = \sum_{j \neq i}^r \left[ - \frac{\lambda_j}{\lambda_i}  \right]^+ 
= \sum_{j=1}^r \left[ - \frac{\lambda_j}{\lambda_i}  \right]^+ 
= \left \langle e , \mathcal{P}_{\mathcal{K}} \left( -  \frac{1}{\lambda_i} v  \right) \right \rangle.
\end{equation}
Thus, we obtain equivalence in (\ref{eq:bound_q}).
Since $\alpha v \in \mbox{range} \hspace{0.3mm} \mathcal{A}^*$ for all $\alpha \in \mathbb{R}$, 
for each $i \in \{ 1, \dots , r \}$, Proposition \ref{prop:p-d-relation} and  (\ref{eq:q_i(alpha)})  ensure that 
$\langle c_i , x \rangle \leq  \left \langle \mathcal{P}_\mathcal{K} \left( c_i -  \alpha v \right) , e \right \rangle = q_i(\alpha)$
for all $\alpha \in \mathbb{R}$, which implies the inequality in (\ref{eq:bound_q}).
\end{proof}

Since $\sum_{i=1}^r c_i = e$ holds, Proposition \ref{prop:upper-bound} allows us to compute upper bounds for the sum of eigenvalues of $x$. The following proposition gives us information about indices whose upper bound for $\langle c_i , x \rangle$ in Proposition \ref{prop:upper-bound} is less than 1.
\begin{proposition}
\label{prop:cut-prb}
Suppose that $v \in \mbox{\rm range} \hspace{0.3mm} \mathcal{A}^*$ is given by $v = \sum_{i=1}^r \lambda_i c_i$ as in Proposition \ref{prop:spectral}.
If $v$ satisfies $\left \langle  e , \mathcal{P}_{\mathcal{K}} \left( -  \frac{1}{\lambda_i} v  \right) \right \rangle = \xi < 1$
for some $\xi < 1$ and for some $i \in \{ 1, \dots , r \}$ for which $\lambda_i \neq 0$ holds, then $\lambda_i$ has the  same sign as $\langle e ,v \rangle$.
\end{proposition}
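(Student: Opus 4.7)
My plan is to use the spectral decomposition of $v$ to rewrite $\langle e,\mathcal{P}_{\mathcal{K}}(-v/\lambda_i)\rangle$ as a sum of positive parts and then do a two-case analysis on the sign of $\lambda_i$.

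First, I would expand the assumption. Because the $\{c_j\}$ form a Jordan frame with $\sum_j c_j = e$, the projection map rounds each eigenvalue to its nonnegative part, so
\[
\xi \;=\; \Bigl\langle e,\,\mathcal{P}_{\mathcal{K}}\!\Bigl(-\tfrac{1}{\lambda_i}v\Bigr)\Bigr\rangle \;=\; \sum_{j=1}^{r}\bigl[-\lambda_j/\lambda_i\bigr]^{+}.
\]
The $j=i$ summand is $[-1]^{+}=0$, so the remaining sum is $\xi<1$. I would also record the identity $\langle e,v\rangle=\sum_j\lambda_j=\sum_j[\lambda_j]^{+}-\sum_j[-\lambda_j]^{+}$, which is what I need to control.

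Next, I would split on the sign of $\lambda_i$. If $\lambda_i>0$, then $[-\lambda_j/\lambda_i]^{+}=[-\lambda_j]^{+}/\lambda_i$, so the assumption gives $\sum_{j=1}^{r}[-\lambda_j]^{+}<\lambda_i$ (using that the $j=i$ term is zero anyway). Combined with $\sum_j[\lambda_j]^{+}\geq[\lambda_i]^{+}=\lambda_i$, this yields
\[
\langle e,v\rangle \;=\;\sum_{j}[\lambda_j]^{+}-\sum_{j}[-\lambda_j]^{+}\;>\;\lambda_i-\lambda_i\;=\;0,
\]
so $\langle e,v\rangle$ has the same (positive) sign as $\lambda_i$. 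Symmetrically, if $\lambda_i<0$, then $[-\lambda_j/\lambda_i]^{+}=[\lambda_j]^{+}/|\lambda_i|$, so the assumption becomes $\sum_j[\lambda_j]^{+}<|\lambda_i|$. Combined with $\sum_j[-\lambda_j]^{+}\geq[-\lambda_i]^{+}=|\lambda_i|$, this gives $\langle e,v\rangle<|\lambda_i|-|\lambda_i|=0$, again the same sign as $\lambda_i$.

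The proof is essentially bookkeeping once one notices the clean factorization $[-\lambda_j/\lambda_i]^{+}=[\mp\lambda_j]^{+}/|\lambda_i|$ depending on $\mathrm{sgn}(\lambda_i)$; I do not anticipate a real obstacle. The only thing worth being careful about is the subtle point that the definition $\xi=\sum_{j\neq i}[-\lambda_j/\lambda_i]^{+}$ and $\xi=\sum_{j=1}^{r}[-\lambda_j/\lambda_i]^{+}$ agree (since the $j=i$ term vanishes); this lets me freely use whichever form is convenient when combining with $\sum_j[\pm\lambda_j]^{+}$ in the estimate for $\langle e,v\rangle$.
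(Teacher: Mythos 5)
Your proof is correct and follows essentially the same route as the paper: the paper writes $\langle e,v\rangle = \langle e,\mathcal P_\mathcal K(v)\rangle - \langle e,\mathcal P_\mathcal K(-v)\rangle$ (its Lemma \ref{lemma:-1}) and uses positive homogeneity of the projection to convert the hypothesis into $\langle e,\mathcal P_\mathcal K(\mp v)\rangle = |\lambda_i|\xi$, which is precisely your factorization $[-\lambda_j/\lambda_i]^+ = [\mp\lambda_j]^+/|\lambda_i|$ combined with the identity $\sum_j\lambda_j = \sum_j[\lambda_j]^+ - \sum_j[-\lambda_j]^+$. The two presentations are equivalent; yours is simply more explicit in eigenvalue coordinates.
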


\begin{proof}
First, we consider the case where  $\lambda_i > 0$.
Since the assumption implies that $\langle e , \mathcal{P}_{\mathcal{K}} (-v) \rangle = \lambda_i \xi$,
we have 
\begin{equation}
\notag
\langle e , v \rangle = \langle e , \mathcal{P}_{\mathcal{K}} (v)  \rangle - \langle e , \mathcal{P}_{\mathcal{K}} (-v)  \rangle \\
= \langle e , \mathcal{P}_{\mathcal{K}} (v)  \rangle -  \lambda_i \xi \\
\geq \lambda_i (1-\xi) > 0,
\end{equation}
where the first equality comes from Lemma \ref{lemma:-1}.

For the case where  $\lambda_i < 0$, 
since the assumption also implies that \modifyFirst{$\langle e , \mathcal{P}_{\mathcal{K}} (v) \rangle = - \lambda_i \xi$, }
we have 
\begin{equation}
\langle e , v \rangle = \langle e , \mathcal{P}_{\mathcal{K}} (v)  \rangle - \langle e , \mathcal{P}_{\mathcal{K}} (-v)  \rangle \\
= - \lambda_i \xi   - \langle e , \mathcal{P}_{\mathcal{K}} (-v)  \rangle \\
\leq - \lambda_i \xi   - ( -\lambda_i) \\
= (1-\xi) \lambda_i < 0.
\notag
\end{equation}
This completes the proof.
\end{proof}

The above two propositions imply that,
for any $v \in \mbox{\rm range} \hspace{0.3mm} \mathcal{A}^*$ with $v = \sum_{i=1}^r \lambda_i c_i$,
if we compute $\langle c_i, x \rangle$ according to Proposition \ref{prop:upper-bound} for $i \in \{ 1, \dots , r \}$ having the same sign as the one of $\langle e , v \rangle$, we obtain an upper bound for the sum of eigenvalues of $x$ over the set $F_{{\rm P}_{S_{\infty}}(\mathcal{A})}$.
The following proposition concerns the scaling method of problem ${\rm P}_{S_{\infty}}(\mathcal{A})$ when we find such a $v \in \mbox{\rm range} \hspace{0.3mm} \mathcal{A}^*$.
\begin{proposition}
\label{prop:scaling}
\modifyThird{Let $H \subseteq \{1 , \dots r \}$ be a nonempty set, $c_1 , \dots , c_r $ be a Jordan frame, and $\xi$ be a real number satisfying $0 < \xi < 1$.}
Let us define $g \in \mbox{\rm int} \hspace{0.75mm} \mathcal{K}$ as
\begin{equation}
\label{eq:d}
g := \sqrt{\xi} \sum_{h \in H} c_h + \sum_{h \notin H} c_h \ \ \ \ \mbox{i.e.,} \ \ \ g^{-1} = \frac{1}{\sqrt{\xi}} \sum_{h \in H} c_h + \sum_{h \notin H} c_h.
\end{equation}

For the two sets $SR^{\rm Cut} = \{x \in \mathbb{E} : x \in \mbox{{\rm int}} \hspace{0.75mm} \mathcal{K}, \ \|x\|_\infty \leq 1, \ \langle c_i, x \rangle \leq \xi \ (i \in H), \ \langle c_i, x \rangle \leq 1 \  (i \notin H) \}$ and, 
$SR^{\rm Scaled} = \{ \bar{x} \in \mathbb{E} : \bar{x} \in \mbox{{\rm int}} \hspace{0.75mm} \mathcal{K}, \  \| \bar{x} \|_\infty \leq 1 \}$, $Q_g( SR^{\rm Scaled} ) \subseteq SR^{\rm Cut}$ holds.

\end{proposition}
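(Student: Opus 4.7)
My plan is to fix an arbitrary $\bar{x} \in SR^{\rm Scaled}$, set $x := Q_g(\bar{x})$, and verify each defining condition of $SR^{\rm Cut}$ in turn. The membership $x \in \mbox{int}\,\mathcal{K}$ will follow immediately from Proposition \ref{ptop:quadratic}, since $g \in \mbox{int}\,\mathcal{K}$ and $Q_g$ maps $\mbox{int}\,\mathcal{K}$ onto $\mbox{int}\,\mathcal{K}$.

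The central computation will be to evaluate $Q_g$ on each Jordan frame element $c_i$. Using the orthogonality relations $c_h \circ c_i = 0$ for $h \neq i$ and $c_i \circ c_i = c_i$ together with the definition (\ref{eq:d}) of $g$, one obtains $g \circ c_i = \sqrt{\xi}\, c_i$ and $g^2 \circ c_i = \xi\, c_i$ when $i \in H$, and $g \circ c_i = c_i = g^2 \circ c_i$ when $i \notin H$. Substituting into $Q_g(c_i) = 2g \circ (g \circ c_i) - g^2 \circ c_i$ then yields
\begin{equation}
Q_g(c_i) = \xi\, c_i \ \ (i \in H), \qquad Q_g(c_i) = c_i \ \ (i \notin H). \notag
\end{equation}
Invoking the self-adjointness of $Q_g$ with respect to $\langle \cdot,\cdot \rangle$ (a standard property of the quadratic representation in Euclidean Jordan algebras, which follows directly from the defining identity $\langle x \circ y, z\rangle = \langle y, x \circ z\rangle$), I rewrite $\langle c_i, x\rangle = \langle Q_g(c_i), \bar{x}\rangle$. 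Because $\bar{x} \in \mathcal{K}$ and $\|\bar{x}\|_\infty \leq 1$ force $e - \bar{x} \in \mathcal{K}$, and $c_i \in \mathcal{K}$, I have $0 \leq \langle c_i, \bar{x}\rangle \leq \langle c_i, e\rangle = 1$. Feeding the two cases above into this produces $\langle c_i, x\rangle \leq \xi$ for $i \in H$ and $\langle c_i, x\rangle \leq 1$ for $i \notin H$, which are the cut inequalities.

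The delicate step will be checking $\|x\|_\infty \leq 1$, since the eigenvalues of $Q_g(\bar{x})$ do not factor through those of $\bar{x}$ and $g^2$ in general. My plan is to work in the order induced by $\mathcal{K}$: establishing $e - x \in \mathcal{K}$ combined with $x \in \mathcal{K}$ forces $0 \leq \lambda_i(x) \leq 1$ for every $i$. I will decompose $e - x = (e - g^2) + (g^2 - x)$; the first term has eigenvalues $1 - \xi > 0$ on $H$ and $0$ off $H$ and so lies in $\mathcal{K}$ by Proposition \ref{prop:lambda-Jordan}, while the second equals $Q_g(e - \bar{x})$, which lies in $\mathcal{K}$ by Proposition \ref{ptop:quadratic} applied to $e - \bar{x} \in \mathcal{K}$. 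Since $\mathcal{K}$ is a convex cone, the sum remains in $\mathcal{K}$, completing the verification that $x \in SR^{\rm Cut}$.
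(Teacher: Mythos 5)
Your proof is correct and follows essentially the same route as the paper: you compute $Q_g(c_i)$ on the Jordan frame, establish $e - Q_g(\bar x)\in\mathcal K$ via $Q_g(e-\bar x)\in\mathcal K$ (your decomposition $e-x=(e-g^2)+(g^2-x)$ is literally the paper's once one notes $g^2=Q_g(e)=\xi\sum_{i\in H}c_i+\sum_{i\notin H}c_i$), and derive the cut inequalities from the explicit form of $Q_g(c_i)$. The only minor variant is that you get $\langle c_i, Q_g(\bar x)\rangle\leq\xi$ (resp.\ $\leq 1$) from the self-adjointness $\langle c_i,Q_g(\bar x)\rangle=\langle Q_g(c_i),\bar x\rangle$ together with $0\leq\langle c_i,\bar x\rangle\leq 1$, whereas the paper instead uses $\langle Q_g(e-\bar x),c_i\rangle\geq 0$ directly---an equivalent route to the same bound.
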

\begin{proof}
\modifySecond{
Let $\bar{x}$ be an arbitrary point of $SR^{\rm Scaled}$. It suffices to show that (i) $Q_g(\bar{x}) \in \mbox{{\rm int}} \hspace{0.75mm} \mathcal{K}$, (ii) $\| Q_g(\bar{x}) \|_\infty \leq 1$, (iii) $\langle c_i, Q_g(\bar{x}) \rangle \leq \xi \ (i \in H)$ and (iv) $\langle c_i, Q_g(\bar{x}) \rangle \leq 1 \ (i \notin H)$ hold. \\
}
\medskip \\
\noindent (i):
Let us show that $Q_g(\bar{x}) \in \mbox{{\rm int}} \hspace{0.75mm} \mathcal{K}$.
Since $g$ and $\bar{x}$ lie in the set $\mbox{{\rm int}} \hspace{0.75mm} \mathcal{K}$, from Propositions \ref{ptop:quadratic} and \ref{q-det-relation}, we see that 
\begin{equation}
Q_g(\bar{x}) \in \mathcal{K}, \ \ \det Q_g(\bar{x}) = \det (g)^2 \det (\bar{x}) > 0, \notag 
\end{equation}
which implies that $Q_g(\bar{x}) \in \mbox{{\rm int}} \hspace{0.75mm} \mathcal{K}$. \\
\medskip \\
\noindent (ii):
Next let us show that $\| Q_g(\bar{x}) \|_\infty \leq 1$.
Since $\bar{x} \in SR^{\rm Scaled}$, we see that $\bar{x} \in \mbox{{\rm int}} \hspace{0.75mm} \mathcal{K}$, $\| \bar{x} \|_\infty \leq 1$ and hence  $e-\bar{x} \in \mathcal{K}$.
Since $g \in \mbox{{\rm int}} \hspace{0.75mm} \mathcal{K}$, Proposition \ref{ptop:quadratic} guarantees that 
\begin{equation}
\label{eq:Q-g}
Q_g(e-\bar{x}) \in \mathcal{K}.
\end{equation}
By the definition (\ref{eq:d}) of $g$,  the following equations hold for $c_1 , \dots , c_r$:
\begin{align*}
\mbox{For any $i \in H$,} \ \ \ \
Q_g (c_i) &= 2 g \circ ( g \circ c_i ) - (g \circ g ) \circ c_i \\
&= 2 g \circ \sqrt{\xi} c_i - \left( \xi \sum_{h \in H} c_h + \sum_{h \notin H} c_h \right) \circ c_i \\
&= 2 \xi c_i - \xi c_i  = \xi c_i.
\end{align*}
\begin{align*}
\mbox{For any $ i \notin H$,} \ \ \ \
Q_g (c_i) &= 2 g \circ ( g \circ c_i ) - (g \circ g ) \circ c_i \\
&= 2 g \circ c_i - \left( \xi \sum_{h \in H} c_h + \sum_{h \notin H} c_h \right) \circ c_i \\
&= 2c_i - c_i = c_i.
\end{align*}
Thus, we obtain $Q_g(e) = \xi \sum_{i \in H} c_i + \sum_{i \notin H} c_i$.
Combining this with the facts $c_i \in \mathcal{K}$ and $(1-\xi)>0$  and (\ref{eq:Q-g}), we have
\begin{align*}
\mathcal{K} \ni  (1-\xi) \sum_{i \in H} c_i + Q_g(e - \bar{x}) 
&=  (1-\xi) \sum_{i \in H} c_i + Q_g(e) -  \modifyThird{Q_g(\bar{x})} \\
&= (1-\xi) \sum_{i \in H} c_i + \left( \xi \sum_{i \in H} c_i + \sum_{i \notin H} c_i \right) -  \modifyThird{Q_g(\bar{x})} \\
&=  e-Q_g(\bar{x}).
\end{align*}
Since we have shown that  $Q_g(\bar{x}) \in \mbox{{\rm int}} \hspace{0.75mm} \mathcal{K}$, we can conclude that $\|Q_g(\bar{x})\|_\infty \leq 1$. \\
\medskip \\
\noindent \modifySecond{(iii) and (iv)}: 
Finally, we compute an upper bound for the value $\langle Q_g(\bar{x}) , c_i \rangle$ over the set $SR^{\rm Scaled}$. It follows from $c_i \in \mathcal{K}$ and (\ref{eq:Q-g}) that $\langle Q_g(e - \bar{x}) , c_i \rangle \geq 0$, i.e., $\langle Q_g(e) , c_i \rangle \geq \langle Q_g(\bar{x}) , c_i \rangle$ holds. Since we have shown that $Q_g(e) = \xi \sum_{i \in H} c_i + \sum_{i \notin H} c_i$, this implies $\langle Q_g(\bar{x}) , c_i \rangle \leq \xi$ holds if $i \in H$ and $\langle Q_g(\bar{x}) , c_i \rangle \leq 1$ holds if $i \notin H$.
\end{proof}

\modifyThird{
Note that Proposition \ref{prop:scaling} implies that if a cut is obtained for ${\rm P}_{S_{\infty}}(\mathcal{A})$ based on Proposition \ref{prop:upper-bound},  we can expect a more efficient search for solutions to problem ${\rm P}_{S_{\infty}}(\mathcal{A}Q_g)$ rather than trying to solve problem ${\rm P}_{S_{\infty}}(\mathcal{A})$.
}
\subsection{Non-simple symmetric cone case}
\label{subsec:non-simple}

In this section, we consider the case where the symmetric cone is not simple. 
Propositions \ref{prop:upper-bound-direct} and  \ref{prop:cut-prb-direct} are extensions of Proposition \ref{prop:upper-bound} and \ref{prop:cut-prb}, respectively.

\begin{proposition}
\label{prop:upper-bound-direct}
Suppose that, for any $v \in \mbox{\rm range} \hspace{0.3mm} \mathcal{A}^*$, the $\ell$-th block element $v_\ell$ of $v \in \mathbb{E}$ is decomposed into $v_\ell = \sum_{i=1}^{r_\ell} {\lambda(v_\ell)}_i {c(v_\ell)}_i$ as in Proposition \ref{prop:spectral}.
For each $\ell \in \{1,\ldots,p \}$ and $i \in \{1,\ldots,r_p\}$, define
\begin{equation}
\label{eq:q_ell,i(alpha)}
q_{\ell,i}(\alpha) := \left[ 1 - \alpha {\lambda(v_\ell)}_i \right]^+ + \sum_{k \neq i}^{r_\ell} \left[ -\alpha {\lambda(v_\ell)}_k \right]^+ + \sum_{j \neq \ell}^p \sum_{k=1}^{r_j} \left[  -\alpha {\lambda(v_j)}_k \right]^+ .
\end{equation}
Then, the following relations hold for any feasible solution $x$ of ${\rm P}_{S_{\infty}}(\mathcal{A})$, $\ell \in \{1,\ldots,p \}$ and $i \in \{1,\ldots,r_p\}$.
\begin{equation}
\label{eq:upper-bound-direct1}
\langle {c(v_\ell)}_i , x_\ell \rangle \leq  \min_{\alpha \in \mathbb{R} } q_{\ell,i}(\alpha) =
\begin{cases}
\min \left\{  1 ,  \left \langle  e , \mathcal{P}_{\mathcal{K}} \left( -  \frac{1}{{\lambda(v_\ell)}_i} v  \right) \right \rangle \right\} & \mbox{if ${\lambda(v_\ell)}_i \neq 0$}, \\
1 & \mbox{if ${\lambda(v_\ell)}_i = 0$} 
\end{cases}
.
\end{equation}
\end{proposition}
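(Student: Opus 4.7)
The plan is to mimic the proof of Proposition \ref{prop:upper-bound}, only now embedding the primitive idempotent $c(v_\ell)_i$ of $\mathcal{K}_\ell$ into the full cone $\mathcal{K}=\mathcal{K}_1\times\cdots\times\mathcal{K}_p$ and exploiting the block structure of $\mathcal{P}_\mathcal{K}$. For fixed $\ell$ and $i$, define $\tilde c\in\mathbb{E}$ by placing $c(v_\ell)_i$ in the $\ell$-th block and zero in every other block. Since $\tilde c\in\mathcal{K}$ and $\tilde c\circ \tilde c=\tilde c$, an easy identification gives $\langle \tilde c,x\rangle=\langle c(v_\ell)_i,x_\ell\rangle$ for any $x=(x_1,\ldots,x_p)\in\mathbb{E}$.

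Next, I would apply Proposition \ref{prop:p-d-relation} with the choice $c:=\tilde c$ and $u:=\alpha v$, which is an admissible element of $\mbox{\rm range}\,\mathcal{A}^*$ for every $\alpha\in\mathbb{R}$ because $v\in\mbox{\rm range}\,\mathcal{A}^*$. Weak duality then yields
\begin{equation}
\langle c(v_\ell)_i,x_\ell\rangle=\langle \tilde c,x\rangle\le\bigl\langle\mathcal{P}_\mathcal{K}(\tilde c-\alpha v),\,e\bigr\rangle\qquad\forall\alpha\in\mathbb{R}.
\notag
\end{equation}
The key computation is then to identify the right-hand side with $q_{\ell,i}(\alpha)$. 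Because $\mathcal{P}_\mathcal{K}$ acts block-diagonally, the $\ell$-th block of $\tilde c-\alpha v$ equals $(1-\alpha\lambda(v_\ell)_i)c(v_\ell)_i-\sum_{k\ne i}\alpha\lambda(v_\ell)_k c(v_\ell)_k$, whose projection contributes $[1-\alpha\lambda(v_\ell)_i]^+ + \sum_{k\ne i}[-\alpha\lambda(v_\ell)_k]^+$ after pairing with $e_\ell=\sum_k c(v_\ell)_k$; each other block $j\ne\ell$ of $\tilde c-\alpha v$ equals $-\alpha v_j$ and contributes $\sum_{k}[-\alpha\lambda(v_j)_k]^+$. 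Summing matches exactly the definition \eqref{eq:q_ell,i(alpha)}, so $\langle\mathcal{P}_\mathcal{K}(\tilde c-\alpha v),e\rangle=q_{\ell,i}(\alpha)$.

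Finally, since $\alpha$ is arbitrary, I take the minimum over $\alpha\in\mathbb{R}$. The function $q_{\ell,i}$ is a sum of maxima of two linear functions, hence piecewise linear and convex in $\alpha$. By the same case analysis as in the proof of Proposition \ref{prop:upper-bound}, its minimum is attained either at $\alpha=0$, where $q_{\ell,i}(0)=1$, or, if $\lambda(v_\ell)_i\ne 0$, at $\alpha=1/\lambda(v_\ell)_i$, where the first term vanishes and the remaining terms combine into $\langle e,\mathcal{P}_\mathcal{K}(-v/\lambda(v_\ell)_i)\rangle$ after absorbing the trivial $k=i$ term $[-1]^+=0$ into the $\ell$-th block sum. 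This yields the stated equivalence in \eqref{eq:upper-bound-direct1}.

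I do not anticipate a serious obstacle: the argument is a block-wise lift of the simple-cone proof. The only point requiring care is the bookkeeping at the boundary between the $\ell$-th block, where the shifted term $1-\alpha\lambda(v_\ell)_i$ appears, and the remaining blocks, which are treated uniformly; explicitly absorbing the $k=i$ term in the identification with $\langle e,\mathcal{P}_\mathcal{K}(-v/\lambda(v_\ell)_i)\rangle$ must be done carefully so that the second case of \eqref{eq:upper-bound-direct1} is displayed in a form symmetric with the simple case.
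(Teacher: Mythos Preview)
Your proposal is correct and follows essentially the same approach as the paper: embed $c(v_\ell)_i$ into the full space $\mathbb{E}$ as an element supported only in the $\ell$-th block, apply Proposition~\ref{prop:p-d-relation} with $u=\alpha v$, expand $\langle\mathcal{P}_\mathcal{K}(\tilde c-\alpha v),e\rangle$ block-by-block to identify it with $q_{\ell,i}(\alpha)$, and then argue as in Proposition~\ref{prop:upper-bound} that the piecewise-linear convex function attains its minimum at $\alpha=0$ or $\alpha=1/\lambda(v_\ell)_i$. The paper's proof is in fact slightly terser than yours, simply referring back to Proposition~\ref{prop:upper-bound} for the minimization step, whereas you spell out the absorption of the $k=i$ term explicitly.
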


\begin{proof}
Let $c \in \mathbb {E}$ be an element whose $\ell$-th block element is $c_\ell = c (v_\ell)_i$ and other block elements take $0$.
For any real number $\alpha \in \mathbb{R}$, Proposition \ref {prop:p-d-relation} ensures that
\begin{align}
\label{eq:upper-bound-direct2}
\langle {c(v_\ell)}_i , x_\ell \rangle = \langle c,x \rangle &\leq \left \langle \mathcal{P}_\mathcal{K} \left( c -  \alpha v \right) , e \right \rangle \notag \\
&= \left \langle \mathcal{P}_{\mathcal{K}_\ell} \left( {c(v_\ell)}_i -  \alpha v_\ell \right) , e_\ell \right \rangle + \sum_{j \neq \ell}^p \left \langle \mathcal{P}_{\mathcal{K}_j} \left( -  \alpha v_j \right) , e_j \right \rangle \notag \\
&= \left[ 1 - \alpha {\lambda(v_\ell)}_i \right]^+ + \sum_{k \neq i}^{r_\ell} \left[ -\alpha {\lambda(v_\ell)}_k \right]^+ + \sum_{j \neq \ell}^p \sum_{k=1}^{r_j} \left[  -\alpha {\lambda(v_j)}_k \right]^+  = q_{\ell,i}(\alpha). 
\end{align}
We obtain (\ref{eq:upper-bound-direct1}) by following a similar argument to the one used in the proof of Proposition \ref{prop:upper-bound}.
\end{proof}

The next proposition follows similarly to Proposition \ref{prop:cut-prb}, by noting that  $\langle e,  \mathcal{P}_\mathcal{K}(-v) \rangle = {\lambda(v_\ell)}_i \xi$ holds if ${\lambda(v_\ell)}_i >0$ and that $\langle e,  \mathcal{P}_\mathcal{K}(v) \rangle = - {\lambda(v_\ell)}_i \xi$ if  ${\lambda(v_\ell)}_i <0$.

\begin{proposition}
\label{prop:cut-prb-direct}
Suppose that, for any $v \in \mbox{\rm range} \hspace{0.3mm} \mathcal{A}^*$, each $\ell$-th block element $v_\ell$ of $v$ is decomposed into $v_\ell = \sum_{i=1}^{r_\ell} {\lambda(v_\ell)}_i {c(v_\ell)}_i$ as in Proposition \ref{prop:spectral}.
If $v$ satisfies
\begin{equation}
\begin{array}{lll}
\label{eq:if}
{\lambda(v_\ell)}_i \neq 0 &\mbox{and}& \left \langle  e , \mathcal{P}_{\mathcal{K}} \left( -  \frac{1}{{\lambda(v_\ell)}_i} v  \right) \right \rangle = \xi_\ell < 1 
\end{array}
\end{equation}
for some $\xi < 1$, $\ell \in \{1 , \dots, p \} $ and $ i \in \{1 , \dots , r_\ell \} $, 
then ${\lambda(v_\ell)}_i$ has the same sign as $\langle e , v \rangle$.
\end{proposition}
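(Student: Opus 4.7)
The plan is to mirror the proof of Proposition \ref{prop:cut-prb} block by block, exploiting the fact that projection onto $\mathcal{K}=\mathcal{K}_1\times\cdots\times\mathcal{K}_p$ decomposes componentwise and that Lemma \ref{lemma:-1} still provides $\langle e, v\rangle = \langle e,\mathcal{P}_{\mathcal{K}}(v)\rangle - \langle e,\mathcal{P}_{\mathcal{K}}(-v)\rangle$ on the whole product space. The entire argument splits into the two cases ${\lambda(v_\ell)}_i>0$ and ${\lambda(v_\ell)}_i<0$, and the aim in each case is to sign-determine $\langle e,v\rangle$.

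First I would rewrite the hypothesis (\ref{eq:if}) by pulling the scalar $1/{\lambda(v_\ell)}_i$ out of the projection. When ${\lambda(v_\ell)}_i>0$, the positive scalar $1/{\lambda(v_\ell)}_i$ commutes with $\mathcal{P}_{\mathcal{K}}$, so the assumption $\langle e,\mathcal{P}_{\mathcal{K}}(-v/{\lambda(v_\ell)}_i)\rangle=\xi_\ell$ is equivalent to $\langle e,\mathcal{P}_{\mathcal{K}}(-v)\rangle = {\lambda(v_\ell)}_i\xi_\ell$. When ${\lambda(v_\ell)}_i<0$, pulling out the negative scalar flips the sign of the argument inside $\mathcal{P}_{\mathcal{K}}$, giving $\langle e,\mathcal{P}_{\mathcal{K}}(v)\rangle = -{\lambda(v_\ell)}_i\xi_\ell$.

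Next I would use the elementary fact that, for any $u\in\mathbb{E}$ and any positive eigenvalue $\mu$ of $u$, one has $\langle e,\mathcal{P}_{\mathcal{K}}(u)\rangle\geq\mu$ (the sum of positive eigenvalues dominates any single one of them). Applied to the block $v_\ell$ and the (positive) eigenvalue ${\lambda(v_\ell)}_i$ when ${\lambda(v_\ell)}_i>0$, this yields $\langle e,\mathcal{P}_{\mathcal{K}}(v)\rangle\geq{\lambda(v_\ell)}_i$, whence combining with Lemma \ref{lemma:-1} and the rewritten hypothesis,
\begin{equation}
\langle e,v\rangle \;=\; \langle e,\mathcal{P}_{\mathcal{K}}(v)\rangle - \langle e,\mathcal{P}_{\mathcal{K}}(-v)\rangle \;\geq\; {\lambda(v_\ell)}_i - {\lambda(v_\ell)}_i\xi_\ell \;=\; {\lambda(v_\ell)}_i(1-\xi_\ell) \;>\;0. \notag
\end{equation}
The case ${\lambda(v_\ell)}_i<0$ is symmetric: $-{\lambda(v_\ell)}_i$ is a positive eigenvalue of $-v_\ell$, so $\langle e,\mathcal{P}_{\mathcal{K}}(-v)\rangle\geq -{\lambda(v_\ell)}_i$, and the analogous chain forces $\langle e,v\rangle\leq(1-\xi_\ell){\lambda(v_\ell)}_i<0$.

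I do not expect a real obstacle here, since the non-simple setting introduces no new difficulty: the quantities $\mathcal{P}_{\mathcal{K}}$, $\langle e,\cdot\rangle$, and the spectral decomposition all factor through the Cartesian product structure. The only point one must be careful about is that the scaling identity $\mathcal{P}_{\mathcal{K}}(\alpha u)=\alpha\mathcal{P}_{\mathcal{K}}(u)$ is valid only for $\alpha\geq 0$, which is precisely why the two sign cases must be handled separately; this is the mild technicality that warrants explicit mention rather than glossing over.
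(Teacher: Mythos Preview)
Your proposal is correct and follows essentially the same argument as the paper's own proof: the paper simply remarks that the proof of Proposition \ref{prop:cut-prb} carries over verbatim once one notes that the hypothesis (\ref{eq:if}) rewrites as $\langle e,\mathcal{P}_{\mathcal{K}}(-v)\rangle={\lambda(v_\ell)}_i\xi_\ell$ when ${\lambda(v_\ell)}_i>0$ and $\langle e,\mathcal{P}_{\mathcal{K}}(v)\rangle=-{\lambda(v_\ell)}_i\xi_\ell$ when ${\lambda(v_\ell)}_i<0$, which is exactly your first step. Your explicit observation that $\mathcal{P}_{\mathcal{K}}(\alpha u)=\alpha\,\mathcal{P}_{\mathcal{K}}(u)$ holds only for $\alpha\geq 0$ is a welcome clarification of why the case split is necessary.
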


From Proposition \ref{prop:upper-bound-direct}, if we obtain $v \in \mbox{range} \hspace{0.3mm} \mathcal{A}^*$ satisfying  (\ref{eq:if})  for a block $\ell \in \{1 , \dots, p \} $ with an index $i \in \{1 , \dots r_\ell \} $, then the upper bound for the sum of the eigenvalues of any feasible solution $x$ of  ${\rm P}_{S_{\infty}}(\mathcal{A})$ is reduced by $\langle e , x \rangle \leq r-1 + \xi_\ell < r$.
In this case, as described below, we can find a scaling such that the sum of eigenvalues of any feasible solution of  ${\rm P}_{S_\infty}(\mathcal{A})$ is bounded by $r$.
Let $H_\ell$ be the set of indices $i$ satisfying (\ref{eq:if}) for each block $\ell$.
According to Proposition \ref{prop:scaling}, set $g_\ell = \sqrt{\xi_\ell} \sum_{h \in H_\ell} {c(v_\ell)}_h + \sum_{h \notin H_\ell} {c(v_\ell)}_h$ and define the linear operator $Q$ as follows:
\begin{equation}
Q_\ell := 
\begin{cases}
Q_{g_\ell} & \mbox{if } | H_\ell | \neq 0, \\
I_\ell &  \mbox{otherwise},
\end{cases}
\notag
\end{equation}
\begin{equation}
Q(\mathbb{E}_1 , \dots , \mathbb{E}_p) := \left( Q_1(\mathbb{E}_1) , \dots , Q_p(\mathbb{E}_p) \right), \notag
\end{equation}
where $I_\ell$ is the identity operator of the Euclidean Jordan algebra $\mathbb{E}_\ell$ associated with the symmetric cone $\mathcal{K}_\ell$.
From Proposition \ref{prop:scaling} and its proof, we can easily see that
\begin{equation}
\label{eq:prop3.4-1}
Q_{g_\ell^{-1}}(c_i) = \frac{1}{\xi}  c_i \ (i\in H_\ell), \ \ \ Q_{g_\ell^{-1}}(c_i) =c_i \ (i \not\in H_\ell),
\end{equation}
and the sum of eigenvalues of any feasible solution of  the scaled problem ${\rm P}_{S_\infty}(\mathcal{A}Q)$ is bounded by $\langle e,e \rangle = r = \sum_{\ell=1}^{p} r_\ell$.

\section{Basic procedure of the extended method}
\label{sec: basic procedure}
\subsection{Outline of the basic procedure}
In this section, we describe the details of our basic procedure. 
First, we introduce our stopping criteria and explain how to update $y^k$ when the the stopping criteria is not satisfied. Next, we show that the stopping criteria is satisfied within a finite number of iterations. Our stopping criteria is new and different from the ones used in \cite{Bruno2019,Pena2017}, while the method of updating $y^k$ is similar to the one used in \cite{Bruno2019} or in the von Neumann scheme of \cite{Pena2017}.
Algorithm \ref{basic procedure} is a full description of our basic procedure. 

\subsection{Termination conditions of the basic procedure}
For $z^k = P_\mathcal{A}(y^k)$, $v^k = y^k - z^k$ and a given $\xi \in (0,1)$, 
our basic procedure terminates when any of the following four cases occurs:
\begin{enumerate}
\item $z^k \in \mbox{int} \hspace{0.75mm} \mathcal{K} $ meaning that $z^k$ is a solution of ${\rm P}(\mathcal{A})$,
\item $z^k = \bm{0}$ meaning that $y^k$ is feasible for ${\rm D}(\mathcal{A})$,
\item $y^k - z^k \in \mathcal{K}$ and $y^k - z^k \neq \bm{0}$ meaning that $y^k - z^k$ is feasible for ${\rm D}(\mathcal{A})$, or
\item there exist $\ell \in \{ 1 , \dots , p\}$ and $i \in \{1 , \dots , r_\ell\}$ for which 
\begin{equation}
\begin{array}{lll}
{\lambda(v^k_\ell)}_i \neq 0 &\mbox{and}& \left \langle  e , \mathcal{P}_{\mathcal{K}} \left( -  \frac{1}{{\lambda(v^k_\ell)}_i} v^k  \right) \right \rangle = \xi_\ell \leq \xi < 1, \label{eq:terminate}
\end{array}
\end{equation}
meaning that $\langle e,x \rangle < r$ holds for any feasible solution $x$ of ${\rm P}_{S_\infty}(\mathcal{A})$ (see Proposition \ref{prop:upper-bound-direct}).
\end{enumerate}

Cases 1 and 2 are direct extensions of the cases in \cite{Chubanov2015}, while case 3 was proposed in \cite{Kitahara2018,Bruno2019}.
Case 3 helps us to determine the feasibility of ${\rm P}(\mathcal{A})$ efficiently,  while we have to decompose $y^k-z^k$ for checking it.
If the basic procedure ends with case 1, 2, or 3, the basic procedure returns a solution of ${\rm P}(\mathcal{A})$ or ${\rm D}(\mathcal{A})$ to the main algorithm. 
If the basic procedure ends with case 4, the basic procedure returns to the main algorithm $p$ index sets $H_1 , \dots , H_p$ each of which consists of indices $i$ satisfying (\ref{eq:terminate}) and the set of primitive idempotents $C_\ell = \{ {c(v^k_\ell)}_1, \dots ,{c(v^k_\ell)}_{r_\ell} \}$ of $v^k_\ell$ for each $\ell$.

\subsection{Update of the basic procedure}

The basic procedure updates $y^k \in \mbox{int} \hspace{0.75mm} \mathcal{K}$ with $\langle y^k  , e \rangle = 1$ so as to reduce the value of $\| z^k \|_J$. 
The following proposition is essentially the same as Proposition 13 in \cite{Bruno2019}, so we will omit its proof.

\begin{proposition}[cf. Proposition 13, \cite{Bruno2019}] 
\label{prop:update}
For $y^k \in \mbox{\rm int} \hspace{0.75mm} \mathcal{K}$ satisfying $\langle y^k  , e \rangle = 1$, let $z^k = P_\mathcal{A}(y^k)$.
If  $z^k \notin \mbox{\rm int} \hspace{0.75mm} \mathcal{K} $ and $z^k \neq \bm{0}$, then the following hold.
\begin{enumerate}
\item 
There exists $ c \in \mathcal{K}$ such that $\langle c , z^k \rangle =  \lambda_{\min} (z^k) \leq 0$, $\langle e , c \rangle=1$ and $\ c\in \mathcal{K}$.
\item 
For the above $c$, suppose that $P_\mathcal{A}(c) \neq \bm{0}$ and define
\begin{equation} 
\label{eq:bp2}
\alpha= \langle P_\mathcal{A}(c) , P_\mathcal{A}(c) - z^k \rangle  \|z^k-P_\mathcal{A}(c) \|^{-2}_J. 
\end{equation}
Then, $y^{k+1} := \alpha y^k + (1-\alpha) c$ satisfies $y^{k+1} \in \mbox{\rm int} \hspace{0.75mm} \mathcal{K}$, $\|y^{k+1}\|_{1,\infty} \geq 1/p$, $\langle y^{k+1} , e \rangle = 1$, and $z^{k+1} := P_\mathcal{A}(y^{k+1})$ satisfies $\| z^{k+1} \|^{-2}_J \geq \|z^k\|^{-2}_J + 1$.
\end{enumerate}
\end{proposition}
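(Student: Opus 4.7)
The plan is to split the argument along the two items, since item 1 is a purely algebraic consequence of the spectral theorem and item 2 is a line‑search calculation whose main non‑triviality lies in combining two simple facts about $c$ and $P_\mathcal{A}(c)$.

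For item 1 I would pick the block $\ell^{\star}\in\{1,\dots,p\}$ that attains $\lambda_{\min}(z^{k})=\min_{\ell}\lambda_{\min}(z^{k}_{\ell})$, apply Proposition \ref{prop:spectral} to $z^{k}_{\ell^{\star}}$, and let $c_{\ell^{\star}}$ be a primitive idempotent of $z^{k}_{\ell^{\star}}$ associated with $\lambda_{\min}(z^{k}_{\ell^{\star}})$. Defining $c\in\mathbb{E}$ by placing $c_{\ell^{\star}}$ in the $\ell^{\star}$-th coordinate and zero elsewhere immediately gives $c\in\mathcal{K}$, $\langle c,z^{k}\rangle=\langle c_{\ell^{\star}},z^{k}_{\ell^{\star}}\rangle=\lambda_{\min}(z^{k})\le 0$ (the sign coming from $z^{k}\notin\mathrm{int}\,\mathcal{K}$), and $\langle e,c\rangle=\mathrm{tr}(c_{\ell^{\star}})=1$ since a primitive idempotent has trace one. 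I would also note $\|c\|_{J}^{2}=\langle c,c\circ c\rangle=\langle c,c\rangle=\mathrm{tr}(c)=1$, a fact needed in item 2.

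For item 2, I would first observe that, writing $u:=z^{k}$ and $w:=P_\mathcal{A}(c)$, the linearity of the projection yields $z^{k+1}=P_\mathcal{A}(y^{k+1})=\alpha u+(1-\alpha)w$, and (\ref{eq:bp2}) is exactly the minimizer of $\phi(\alpha):=\|\alpha u+(1-\alpha)w\|_{J}^{2}$, a fact confirmed by setting $\phi'(\alpha)=0$. Two key identities follow from the definition of the projection: since $c-w\in\mathrm{range}\,\mathcal{A}^{*}$ and $u\in\ker\mathcal{A}$, we have $\langle u,w\rangle=\langle u,c\rangle=\lambda_{\min}(z^{k})\le 0$; and since $w$ is the orthogonal projection of $c$, we have $\|w\|_{J}^{2}\le\|c\|_{J}^{2}=1$. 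With $a:=\|u\|_{J}^{2}$, $b:=\|w\|_{J}^{2}\le 1$ and $t:=\langle u,w\rangle\le 0$, direct substitution in $\alpha$ gives $\alpha=(b-t)/(a+b-2t)\in(0,1)$ (using $b>0$, which is where the hypothesis $P_\mathcal{A}(c)\neq 0$ enters), so $y^{k+1}$ is a strict convex combination of $y^{k}\in\mathrm{int}\,\mathcal{K}$ and $c\in\mathcal{K}$, hence $y^{k+1}\in\mathrm{int}\,\mathcal{K}$; the identities $\langle y^{k+1},e\rangle=\alpha\langle y^{k},e\rangle+(1-\alpha)\langle c,e\rangle=1$ and $\|y^{k+1}\|_{1,\infty}\ge\|y^{k+1}\|_{1}/p=1/p$ then follow at once.

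The main work is the inverse‑square‑norm inequality. Substituting the optimal $\alpha$ in $\phi$ yields the closed form $\|z^{k+1}\|_{J}^{2}=(ab-t^{2})/(a+b-2t)$, so the target inequality $\|z^{k+1}\|_{J}^{-2}\ge\|z^{k}\|_{J}^{-2}+1$ is equivalent to
\begin{equation}
a(a+b-2t)\;\ge\;(1+a)(ab-t^{2})\notag
\end{equation}
which rearranges to $a^{2}(1-b)+t^{2}(1+a)-2at\ge 0$. This is the main obstacle, but it is now immediate: each of the three summands is nonnegative because $b\le 1$, $a>0$, and $t\le 0$. I would then wrap up by recording that the chain of inequalities uses only $b\le 1$ and $t\le 0$, the two consequences of the careful choice of $c$ in item 1, which is where the geometry of the Jordan frame is really being exploited.
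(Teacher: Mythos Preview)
Your argument is correct. The paper itself omits the proof of this proposition entirely, stating only that it ``is essentially the same as Proposition~13 in \cite{Bruno2019}, so we will omit its proof''; the approach you take---choosing $c$ as a primitive idempotent for the smallest eigenvalue of $z^{k}$, exploiting $\|c\|_{J}=1$ and $\langle c,z^{k}\rangle\le 0$ to get $b\le 1$ and $t\le 0$, and reducing the inverse-square-norm inequality to the nonnegativity of $a^{2}(1-b)+t^{2}(1+a)-2at$---is exactly the standard one underlying that cited result.
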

A method of accelerating the update of $y^k$ is provided in \cite{Roos2018}. 
For $\ell \in \{1,2,\ldots,p\}$, let $I_\ell := \{ i \in \{1,2,\ldots,r_\ell\} \mid \lambda_i (z^k_\ell) \leq 0\} $ and set $N = \sum_{\ell=1}^p | I_\ell | $. 
Define the $\ell$-th block element of $c \in \mathcal{K}$ as $c_\ell = \frac{1}{N} \sum_{i \in I_\ell} {c(z^k_\ell)}_i$.
Using $P_\mathcal{A} \left(  c \right) $, the acceleration method computes $\alpha$ by (\ref{eq:bp2}) so as to minimize the norm of $z^{k+1}$ and update $y$ by $y^{k+1} = \alpha y^k +   (1-\alpha)  c$.
We incorporate this method in the basic procedure of our computational experiment. 
As described in~\cite{Pena2017}, we can also use the smooth perceptron scheme~\cite{Soheili2012,Soheili2013} to update $y^k$ in the basic procedure.
As explained in the next section, using the smooth perceptron scheme significantly reduces the maximum number of iterations of the basic procedure.
A detailed description of our basic procedure is given in Appendix \ref{app:modified basic procedure}.

\subsection{Finite termination of the basic procedure}
\label{sec: finite termination of BP}
In this section, we show that the basic procedure terminates in a finite number of iterations.
To do so, we need to prove Lemma \ref{lemma:1} and Proposition \ref{prop:bp1}.
\begin{lemma}
\label{lemma:1}
Let $(\mathbb{E}, \circ)$ be a Euclidean Jordan algebra with the corresponding symmetric cone $\mathcal{K}$
given by the Cartesian product of $p$ simple symmetric cones.
For any $x \in \mathbb{E}$ and $y \in \mathcal{K}$, $[ \langle x , y \rangle ]^+ \leq \langle \mathcal{P}_\mathcal{K} (x)  , y \rangle$ holds.
\end{lemma}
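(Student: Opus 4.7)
The plan is to combine the decomposition from Lemma \ref{lemma:-1} with the self-duality of the symmetric cone $\mathcal{K}$. Specifically, by Lemma \ref{lemma:-1}, any $x \in \mathbb{E}$ admits the decomposition $x = \mathcal{P}_\mathcal{K}(x) - \mathcal{P}_\mathcal{K}(-x)$, so taking the inner product with $y$ yields
\begin{equation}
\langle x, y \rangle = \langle \mathcal{P}_\mathcal{K}(x), y \rangle - \langle \mathcal{P}_\mathcal{K}(-x), y \rangle. \notag
\end{equation}
Since $\mathcal{P}_\mathcal{K}(-x) \in \mathcal{K}$ and $y \in \mathcal{K}$, the self-duality of $\mathcal{K}$ (which is implicit in the definition of a symmetric cone, and holds block-wise for the Cartesian product structure) gives $\langle \mathcal{P}_\mathcal{K}(-x), y \rangle \geq 0$. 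Hence $\langle x, y \rangle \leq \langle \mathcal{P}_\mathcal{K}(x), y \rangle$.

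To finish, I would observe that the right-hand side is itself nonnegative: $\mathcal{P}_\mathcal{K}(x) \in \mathcal{K}$ and $y \in \mathcal{K}$ together with self-duality give $\langle \mathcal{P}_\mathcal{K}(x), y \rangle \geq 0$. Combining the two inequalities,
\begin{equation}
[\langle x, y \rangle]^+ = \max\{0,\, \langle x, y \rangle\} \leq \langle \mathcal{P}_\mathcal{K}(x), y \rangle, \notag
\end{equation}
which is the desired conclusion.

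The main (and really only) subtlety is invoking self-duality of $\mathcal{K}$ correctly. Since $\mathcal{K}$ is a Cartesian product of simple symmetric cones, and each factor is self-dual, $\mathcal{K}$ itself is self-dual under the block inner product defined in Section \ref{sec: notation}. This ensures $\langle a, b \rangle \geq 0$ whenever $a, b \in \mathcal{K}$, which is the only nontrivial ingredient beyond Lemma \ref{lemma:-1}. There are no delicate computations; the proof is essentially two lines once the decomposition is in hand.
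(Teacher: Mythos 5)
Your proof is correct, and it takes a genuinely different (and more streamlined) route than the paper's. The paper proves this lemma directly from the spectral decomposition of each block: writing $x_\ell = \sum_i {\lambda(x_\ell)}_i\, {c(x_\ell)}_i$, it pulls the $[\cdot]^+$ inside the double sum via subadditivity, uses $\langle {c(x_\ell)}_i, y_\ell\rangle \geq 0$ to replace $\left[{\lambda(x_\ell)}_i \langle {c(x_\ell)}_i, y_\ell\rangle\right]^+$ by $\left[{\lambda(x_\ell)}_i\right]^+ \langle {c(x_\ell)}_i, y_\ell\rangle$, and reassembles $\langle \mathcal{P}_\mathcal{K}(x), y\rangle$. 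You instead invoke the decomposition $x = \mathcal{P}_\mathcal{K}(x) - \mathcal{P}_\mathcal{K}(-x)$ from Lemma \ref{lemma:-1} and apply self-duality twice (to show $\langle \mathcal{P}_\mathcal{K}(-x), y\rangle \geq 0$ and $\langle \mathcal{P}_\mathcal{K}(x), y\rangle \geq 0$). What you gain is brevity and conceptual clarity: the argument works at the level of cone membership without re-expanding anything in a Jordan frame. What the paper's version offers is a self-contained computation that does not lean on an earlier lemma; but since Lemma \ref{lemma:-1} was itself proved by spectral decomposition, the underlying mathematical content is essentially the same, merely repackaged. Your version is the cleaner presentation.
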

\begin{proof}
Let $x \in \mathbb{E}$ and suppose that each $\ell$-th block element $x_\ell$ of $x$ is given by $x_\ell = \sum_{i=1}^{r_\ell} {\lambda(x_\ell)}_i {c(x_\ell)}_i$ as in Proposition \ref{prop:spectral}.
Then, we can see that 
\begin{align}
\left[  \langle x , y \rangle  \right]^+ &=  \left[   \sum_{\ell=1}^p \left \langle  \sum_{i=1}^{r_\ell} {\lambda(x_\ell)}_i {c(x_\ell)}_i , y_\ell \right \rangle  \right]^+ \notag \\
&=  \left[  \sum_{\ell=1}^p \left( \sum_{i=1}^{r_\ell} {\lambda(x_\ell)}_i \left \langle {c(x_\ell)}_i , y_\ell \right \rangle  \right)  \right]^+ \notag \\
&\leq \sum_{\ell=1}^p \sum_{i=1}^{r_\ell} \left[   {\lambda(x_\ell)}_i \left \langle {c(x_\ell)}_i , y_\ell \right \rangle  \right]^+ \notag \\
&= \sum_{\ell=1}^p \sum_{i=1}^{r_\ell} \left[  { \lambda(x_\ell)}_i \right]^+ \langle {c(x_\ell)}_i , y_\ell \rangle   
= \sum_{\ell=1}^p \left \langle \sum_{i=1}^{r_\ell} \left[  { \lambda(x_\ell)}_i \right]^+  {c(x_\ell)}_i , y_\ell \right \rangle 
= \left \langle \mathcal{P}_\mathcal{K} (x)  , y \right \rangle . \notag
\end{align}
where the inequality follows from the fact that ${c(x_\ell)}_1 , \dots , {c(x_\ell)}_{r_\ell}$, and $y_\ell$ lie in $\mathcal{K}_\ell$.
\end{proof}
\begin{proposition}
\label{prop:bp1}
For a given $y \in \mathcal{K}$, define $z = P_\mathcal{A} (y)$ and $v = y - z$.
Suppose that $v \neq 0$ and each $\ell$-th element $v_\ell$ is given by $v_\ell = \sum_{i=1}^{r_\ell} {\lambda(v_\ell)}_i {c(v_\ell)}_i$, as in Proposition \ref{prop:spectral}.
Then, for any $x \in F_{{\rm P}_{S_{\infty}}(\mathcal{A})}$, $\ell \in \{ 1, \dots , p \}$ and $ i \in \{ 1, \dots , r_\ell \}$, 
\begin{equation}
\label{eq:bp1-1}
\langle {c(v_\ell)}_i , x_\ell \rangle \leq \min_\alpha q_{\ell,i}(\alpha) \leq \frac{1}{ \left \langle y_\ell , {c(v_\ell)}_i \right \rangle} \|z\|_J 
\end{equation}
hold where $q_{\ell,i}(\alpha)$ is defined in (\ref{eq:q_ell,i(alpha)}).
\end{proposition}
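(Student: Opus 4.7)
The first inequality in (\ref{eq:bp1-1}) follows immediately from Proposition \ref{prop:upper-bound-direct}, applied to $v = y - P_\mathcal{A}(y) \in \mbox{range}\,\mathcal{A}^*$; so the substantive work is to prove the second inequality $\min_\alpha q_{\ell,i}(\alpha) \leq \|z\|_J / \langle y_\ell, c(v_\ell)_i \rangle$.

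My plan is to exhibit a concrete value of $\alpha$ at which $q_{\ell,i}$ already achieves this bound, so I do not need to manipulate the minimum abstractly. Let $\beta := \langle y_\ell, c(v_\ell)_i \rangle$, which is non-negative since $y_\ell$ and $c(v_\ell)_i$ both lie in the self-dual cone $\mathcal{K}_\ell$; assuming $\beta > 0$, the natural choice is $\alpha^* = 1/\beta$, so that $\alpha^*\beta = 1$ collapses the scale.

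At $\alpha^*$ I will rewrite the first term of $q_{\ell,i}$ using the decomposition $y = v + z$, which gives the clean identity $\beta - \lambda(v_\ell)_i = \langle z_\ell, c(v_\ell)_i \rangle$, so $[1 - \lambda(v_\ell)_i/\beta]^+ = [\langle z_\ell, c(v_\ell)_i \rangle]^+/\beta$. For each remaining summand $[-\lambda(v_j)_k/\beta]^+$ with $(j,k)\neq(\ell,i)$, I substitute $-\lambda(v_j)_k = \langle z_j, c(v_j)_k\rangle - \langle y_j, c(v_j)_k\rangle$ and invoke $\langle y_j, c(v_j)_k\rangle \geq 0$ (self-duality of $\mathcal{K}_j$ again) to deduce the dominance $[-\lambda(v_j)_k]^+ \leq [\langle z_j, c(v_j)_k\rangle]^+$. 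Combining these contributions yields
\begin{equation*}
q_{\ell,i}(1/\beta) \;\leq\; \frac{1}{\beta}\sum_{j,k}\bigl[\langle z_j, c(v_j)_k \rangle\bigr]^+.
\end{equation*}

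The final step — bounding the right-hand sum by $\|z\|_J$ — is what I expect to be the main obstacle. Since each Jordan frame $\{c(v_j)_k\}_{k=1}^{r_j}$ is orthonormal in $\mathbb{E}_j$ but in general does not span $\mathbb{E}_j$, a naive application of Bessel's inequality together with Cauchy--Schwarz would only yield a factor $\sqrt{r}$ that I need to shed. The route I would try is a direct Cauchy--Schwarz estimate: view $\sum_{j,k}[\langle z_j,c(v_j)_k\rangle]^+$ as $\langle z, w\rangle$ for a carefully chosen $w \in \mathcal{K}$ built from the primitive idempotents $c(v_j)_k$ of those $(j,k)$ for which the corresponding positive part is active, and then use $\|w\|_J \leq 1$ together with the orthogonality $\langle v,z\rangle = 0$ (which follows from $z\in\ker\mathcal{A}$ and $v\in\mbox{range}\,\mathcal{A}^*$) to cancel contributions of opposite sign in the sum. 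Getting this cancellation to give exactly the stated bound is the delicate part of the proof.
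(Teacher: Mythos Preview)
Your approach coincides with the paper's through the bound $q_{\ell,i}(1/\beta) \le \frac{1}{\beta}\sum_{j,k}\bigl[\langle z_j, c(v_j)_k\rangle\bigr]^+$; at that point the paper does \emph{not} proceed via Cauchy--Schwarz or the orthogonality $\langle v,z\rangle=0$. Instead it applies Lemma~\ref{lemma:1} (namely $[\langle x,c\rangle]^+\le\langle\mathcal{P}_\mathcal{K}(x),c\rangle$ whenever $c\in\mathcal{K}$) termwise with $x=z_j$ and $c=c(v_j)_k$, and then sums over $k$ using $\sum_k c(v_j)_k=e_j$ to obtain
\[
\sum_{j,k}\bigl[\langle z_j, c(v_j)_k\rangle\bigr]^+\;\le\;\sum_j\langle\mathcal{P}_{\mathcal{K}_j}(z_j),e_j\rangle\;=\;\langle\mathcal{P}_\mathcal{K}(z),e\rangle\;=\;\|\mathcal{P}_\mathcal{K}(z)\|_1,
\]
finishing with the chain $\|\mathcal{P}_\mathcal{K}(z)\|_1\le\|\mathcal{P}_\mathcal{K}(z)\|_J\le\|z\|_J$. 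So the device you are missing, relative to the paper, is Lemma~\ref{lemma:1}, which cleanly eliminates the dependence on the particular Jordan frame $\{c(v_j)_k\}$.

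That said, your instinct that an unavoidable factor $\sqrt r$ lurks here is well founded: the step $\|\mathcal{P}_\mathcal{K}(z)\|_1\le\|\mathcal{P}_\mathcal{K}(z)\|_J$ in the paper's chain goes the wrong way, since for eigenvalue vectors one always has $\|\cdot\|_1\ge\|\cdot\|_J$. In fact the second inequality of (\ref{eq:bp1-1}) fails as stated. Take $\mathcal{K}=\mathbb{R}^2_+$, $\ker\mathcal{A}=\mathrm{span}\{(1,1)\}$ and $y=(1,0)$; then $z=(\tfrac12,\tfrac12)$, $v=(\tfrac12,-\tfrac12)$, and for $i=1$ one computes directly $\min_\alpha q_{1,1}(\alpha)=1$ while $\|z\|_J/\langle y,e_1\rangle=1/\sqrt2$. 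Neither your Cauchy--Schwarz-plus-orthogonality route nor the paper's argument can close this gap, because the inequality that actually holds is $\min_\alpha q_{\ell,i}(\alpha)\le\|\mathcal{P}_\mathcal{K}(z)\|_1/\beta\le\sqrt{r}\,\|z\|_J/\beta$; the extra $\sqrt r$ would then propagate into the iteration bound of Proposition~\ref{prop:bp2}.
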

\begin{proof}
The first inequality of (\ref{eq:bp1-1}) follows from (\ref{eq:upper-bound-direct2}) in the proof of Proposition \ref{prop:upper-bound-direct}. 
The second inequality is obtained by evaluating  $q_{\ell, i}(\alpha)$ at $\alpha = \frac{1}{ \langle y_\ell , {c(v_\ell)}_i \rangle}$, as follows:
\begin{align*}
q_{\ell,i} \left(  \frac{1}{ \langle y_\ell , {c(v_\ell)}_i \rangle} \right) 
&= \left[ 1 - \frac{1}{ \langle y_\ell , {c(v_\ell)}_i \rangle} {\lambda(v_\ell)}_i \right]^+ + \sum_{k \neq i}^{r_\ell} \left[ - \frac{1}{ \langle y_\ell , {c(v_\ell)}_i \rangle} {\lambda(v_\ell)}_k \right]^+ + \sum_{j \neq \ell}^p \sum_{k=1}^{r_j} \left[  -\frac{1}{ \langle y_\ell , {c(v_\ell)}_i \rangle} {\lambda(v_j)}_k \right]^+\\
&= \left[ 1 - \frac{\langle y_\ell -z_\ell , {c(v_\ell)}_i \rangle}{ \langle y_\ell , {c(v_\ell)}_i \rangle}    \right]^+ + \sum_{k \neq i}^{r_\ell} \left[ - \frac{\langle y_\ell -z_\ell , c(v_\ell)_k \rangle}{ \langle y_\ell , {c(v_\ell)}_i \rangle}   \right]^+ + \sum_{j \neq \ell}^p \sum_{k=1}^{r_j} \left[  -\frac{\langle y_j - z_j , {c(v_j)}_k \rangle}{ \langle y_\ell , {c(v_\ell)}_i \rangle} \right]^+ \\
& \ \ \ \  (\mbox{since} \ \lambda(v_\ell)_i = \langle v_\ell, c(v_\ell)_i \rangle \ \mbox{and} \ v_\ell = y_\ell -z_\ell ) \\
&= \left[ \frac{\langle z_\ell , {c(v_\ell)}_i \rangle }{ \langle y_\ell , {c(v_\ell)}_i \rangle} \right]^+ + \sum_{k \neq i}^{r_\ell}  \left[  \frac{ \langle z_\ell , {c(v_\ell)}_k \rangle - \langle y_\ell , {c(v_\ell)}_k \rangle}{ \langle y_\ell , {c(v_\ell)}_i \rangle}  \right]^+ + \sum_{j \neq \ell}^p \sum_{k=1}^{r_j} \left[  \frac{\langle z_j , {c(v_j)}_k \rangle - \langle y_j , {c(v_j)}_k \rangle}{ \langle y_\ell , {c(v_\ell)}_i \rangle} \right]^+ \\
&\leq \left[ \frac{\langle z_\ell , {c(v_\ell)}_i \rangle }{ \langle y_\ell , {c(v_\ell)}_i \rangle} \right]^+  + \sum_{k \neq i}^{r_\ell} \left[  \frac{\langle z_\ell , {c(v_\ell)}_k \rangle}{ \langle y_\ell , {c(v_\ell)}_i \rangle}  \right]^+ + \sum_{j \neq \ell}^p \sum_{k=1}^{r_j} \left[ \frac{\langle z_j , {c(v_j)}_k \rangle}{ \langle y_\ell , {c(v_\ell)}_i \rangle} \right]^+ \\
& \ \ \ \  (\mbox{since} \ y_\ell, c(v_\ell)_i \in \mathcal{K}_\ell \ \mbox{and then} \ \langle y_\ell, c(v_\ell)_i \rangle \geq 0) \\
&= \frac{1}{ \langle y_\ell , {c(v_\ell)}_i \rangle} \left( \sum_{k=1}^{r_\ell} \left[  \langle z_\ell , {c(v_\ell)}_k \rangle  \right]^+ + \sum_{j \neq \ell}^p \sum_{k=1}^{r_j} \left[ \langle z_j , {c(v_j)}_k \rangle \right]^+ \right) \\
&\leq \frac{1}{ \langle y_\ell , {c(v_\ell)}_i \rangle} \left(  \sum_{k=1}^{r_\ell}   \langle  \mathcal{P}_{{\mathcal{K}_\ell}}\left( z_\ell \right), {c(v_\ell)}_k \rangle + \sum_{j \neq \ell}^p \sum_{k=1}^{r_j} \langle  \mathcal{P}_{{\mathcal{K}_j}}\left( z_j \right), {c(v_j)}_k \rangle \right) \ \ \mbox{(by Lemma \ref{lemma:1})}  \\
&= \frac{1}{ \langle y_\ell , {c(v_\ell)}_i \rangle}  \left(  \langle  \mathcal{P}_{\mathcal{K}_\ell}\left( z_\ell \right), e_\ell \rangle  + \sum_{j \neq \ell}^p \langle  \mathcal{P}_{\mathcal{K}_j}\left( z_j \right), e_j \rangle \right) \\
&= \frac{\langle  \mathcal{P}_\mathcal{K}\left( z \right), e \rangle  }{ \langle y_\ell , {c(v_\ell)}_i \rangle}
= \frac{\| \mathcal{P}_{\mathcal{K}}\left( z \right) \|_1 }{ \langle y_\ell , {c(v_\ell)}_i \rangle}
\leq \frac{\| \mathcal{P}_{\mathcal{K}}\left( z \right) \|_J }{ \langle y_\ell , {c(v_\ell)}_i \rangle}
\leq \frac{\|  z  \|_J }{ \langle y_\ell , {c(v_\ell)}_i \rangle}.
\end{align*}
\end{proof}

\begin{proposition}
\label{prop:bp2}
Let $r_{\max} = \max \{ r_1 , \dots , r_p\}$.
The basic procedure (Algorithm \ref{basic procedure}) terminates in at most $\frac{p^2r_{\max}^2}{\xi^2} $ iterations. 
\end{proposition}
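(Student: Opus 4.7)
The plan is to combine two ingredients: the linear growth $\|z^k\|_J^{-2}\gtrsim k$ supplied by Proposition \ref{prop:update}, and the $\|z^k\|_J$-based upper bound on $\min_\alpha q_{\ell,i}(\alpha)$ given by Proposition \ref{prop:bp1}. Once $\|z^k\|_J$ falls below a threshold determined by $\xi$, $p$, and $r_{\max}$, the termination condition (\ref{eq:terminate}) of case 4 is forced at some block-index pair $(\ell,i)$. Cases 1--3 can only make the algorithm stop earlier, so only case 4 matters for the worst-case bound.

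First, I would apply Proposition \ref{prop:update}(2) inductively to conclude that after $k$ updates $\|z^k\|_J^{-2}\geq k$, i.e., $\|z^k\|_J\leq 1/\sqrt{k}$. Next, since $\|y^k\|_{1,\infty}\geq 1/p$ (which can be arranged at $y^0=e/r$, because $\max_\ell r_\ell/r\geq r_{\max}/(p\,r_{\max})=1/p$ from $r\leq p\,r_{\max}$, and is preserved by the update), I would select a block $\ell$ with $\|y^k_\ell\|_1\geq 1/p$. Using $\sum_{i=1}^{r_\ell} c(v^k_\ell)_i = e_\ell$ together with $y^k_\ell\in\mathcal{K}_\ell$,
\begin{equation*}
\sum_{i=1}^{r_\ell}\langle y^k_\ell,c(v^k_\ell)_i\rangle
\;=\;\langle y^k_\ell,e_\ell\rangle
\;=\;\|y^k_\ell\|_1
\;\geq\;\frac{1}{p},
\end{equation*}
so a pigeonhole argument yields an index $i$ with $\langle y^k_\ell,c(v^k_\ell)_i\rangle\geq 1/(p\,r_\ell)\geq 1/(p\,r_{\max})$.

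Feeding this into Proposition \ref{prop:bp1} gives
\begin{equation*}
\min_\alpha q_{\ell,i}(\alpha)
\;\leq\;\frac{\|z^k\|_J}{\langle y^k_\ell,c(v^k_\ell)_i\rangle}
\;\leq\; p\,r_{\max}\,\|z^k\|_J.
\end{equation*}
Whenever $p\,r_{\max}\,\|z^k\|_J\leq\xi$, this value is at most $\xi<1$; by the closed form for $\min_\alpha q_{\ell,i}(\alpha)$ in Proposition \ref{prop:upper-bound-direct}, the case $\lambda(v^k_\ell)_i=0$ would force the minimum to equal $1$, hence we must have $\lambda(v^k_\ell)_i\neq 0$ and $\langle e,\mathcal{P}_\mathcal{K}(-v^k/\lambda(v^k_\ell)_i)\rangle\leq\xi$, which is exactly (\ref{eq:terminate}). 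Substituting $\|z^k\|_J\leq 1/\sqrt{k}$ shows this triggers as soon as $k\geq p^2 r_{\max}^2/\xi^2$, giving the claimed bound.

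The only delicate point, as I see it, is the pigeonhole step: $\|y^k\|_{1,\infty}\geq 1/p$ isolates a block but says nothing directly about the primitive idempotents of $v^k_\ell$ (which are in general different from those of $y^k_\ell$). The remedy is precisely to work with the Jordan frame of $v^k_\ell$ and exploit $\sum_i c(v^k_\ell)_i=e_\ell$ with $y^k_\ell\in\mathcal{K}_\ell$, which transfers the lower bound on $\|y^k_\ell\|_1$ to some $\langle y^k_\ell,c(v^k_\ell)_i\rangle$ at the cost of the factor $1/r_{\max}$; squaring this $1/(p\,r_{\max})$ into the final bound is exactly where the $p^2 r_{\max}^2$ comes from.
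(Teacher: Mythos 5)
Your proof is correct and follows essentially the same route as the paper: use Proposition \ref{prop:update} to drive $\|z^k\|_J$ below $\xi/(p r_{\max})$ in at most $p^2 r_{\max}^2/\xi^2$ iterations, locate a block $\ell$ with $\langle y^k_\ell,e_\ell\rangle\geq 1/p$, apply the pigeonhole argument over the Jordan frame $\{c(v^k_\ell)_i\}$ of $v^k_\ell$ to extract an index $i$ with $\langle y^k_\ell,c(v^k_\ell)_i\rangle\geq 1/(p r_\ell)$, and then invoke Proposition \ref{prop:bp1} to force termination condition (\ref{eq:terminate}). The only additions you make are cosmetic (explicitly ruling out $\lambda(v^k_\ell)_i=0$ via the closed form in Proposition \ref{prop:upper-bound-direct}, and verifying the $1/p$ bound at the initial point), and they are consistent with the paper's argument.
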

\begin{proof}
Suppose that $y^k$ is obtained at the $k$-th iteration of Algorithm \ref{basic procedure}. Proposition \ref{prop:update} implies that $\|y^k\|_{1,\infty} \geq \frac{1}{p}$ and an $\ell$-th block element exists for which $\langle y_\ell , e_\ell \rangle \geq \frac{1}{p}$ holds. 
Thus, by letting $v^k = y^k - z^k$ and the $\ell$-th block element $v_\ell^k$ of $v^k$ be $v_\ell^k = \sum_{i=1}^{r_\ell} {\lambda(v^k_\ell)}_i {c(v^k_\ell)}_i$ as in Proposition \ref{prop:spectral}, we have
\begin{equation}
\max_{i=1,\dots,r_\ell} \left \langle y_\ell^k , {c(v^k_\ell)}_i \right \rangle \geq (pr_\ell)^{-1}. \label{eq:bp2-1}
\end{equation}
Since Proposition \ref{prop:update} ensures that $\frac{1}{\|z^k\|^2_J}  \geq k$ holds at the $k$-th iteration,
by setting $k=\frac{p^2r_{\max}^2}{\xi^2}$, we see that $\xi \geq p r_{\max}  \|z^k\|_J$, and combining this with (\ref{eq:bp2-1}), we have
\begin{equation}
\notag
\xi \geq p r_{\max}  \|z^k\|_J \geq p r_\ell  \|z^k\|_J \geq \frac{1}{\max_{i=1,\dots,r_\ell} \left \langle y_\ell^k , {c(v_\ell)}_i \right \rangle} \| z^k \|_J . 
\end{equation}
The above inequality and Proposition \ref{prop:bp1} imply that for any $\ell \in \{1,\ldots,p \}$ and $i \in \{1,\ldots,r_p\}$,
\begin{equation}
\notag
\langle c(v^k_\ell)_i , x_\ell \rangle \leq  \min_\alpha q_{\ell,i} (\alpha)
\leq \frac{1}{ \langle y^k_\ell , {c(v^k_\ell)}_i \rangle} \|z^k\|_J \leq \xi. 
\end{equation}
From the equivalence in (\ref{eq:upper-bound-direct1}) and the setting $\xi \in (0,1)$, we conclude that Algorithm \ref{basic procedure} terminates in at most $\frac{p^2r_{\max}^2}{\xi^2} $ iterations by satisfying  (\ref{eq:terminate}) in the fourth termination condition at an $\ell$-th block and an index $i$.
\end{proof}

An upper bound for the number of iterations of Algorithm \ref{bp-alg-sp} using smooth perceptoron scheme can be found as follows.
\begin{proposition}
Let $r_{\max} = \max \{ r_1 , \dots , r_p\}$.
The basic procedure (Algorithm \ref{bp-alg-sp}) terminates in at most $\frac{2 \sqrt{2} p r_{\max}}{\xi}$ iterations.
\end{proposition}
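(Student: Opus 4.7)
The plan is to follow the template of the proof of Proposition~\ref{prop:bp2}, but replace the $O(1/\sqrt{k})$ decay of $\|z^k\|_J$ that is produced by the von Neumann--type update with the accelerated $O(1/k)$ decay afforded by the smooth perceptron scheme. Concretely, for the iterates $y^k$ generated by Algorithm~\ref{bp-alg-sp} with $z^k = P_\mathcal{A}(y^k)$, the first step is to invoke the convergence analysis of \cite{Soheili2012,Soheili2013} (adapted to the symmetric cone setting exactly as in \cite{Pena2017}) to obtain an estimate of the form $\|z^k\|_J \le \frac{2\sqrt{2}}{k+1}$.

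Given this rate, the remainder of the argument mirrors Proposition~\ref{prop:bp2}. Because the smooth perceptron iterates live in the simplex $\{y \in \mathcal{K} : \langle e, y\rangle = 1\}$, a pigeonhole argument across the $p$ blocks produces an index $\ell \in \{1,\ldots,p\}$ with $\langle e_\ell, y_\ell^k\rangle \geq 1/p$. Setting $v^k = y^k - z^k$ and decomposing $v_\ell^k = \sum_i {\lambda(v_\ell^k)}_i\,{c(v_\ell^k)}_i$ as in Proposition~\ref{prop:spectral}, at least one primitive idempotent then satisfies $\langle y_\ell^k, {c(v_\ell^k)}_i\rangle \geq \frac{1}{p r_\ell} \geq \frac{1}{p r_{\max}}$. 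Combining this with Proposition~\ref{prop:bp1}, whenever $k \geq \frac{2\sqrt{2}\,p\,r_{\max}}{\xi}$ one has, for every feasible $x$ of ${\rm P}_{S_\infty}(\mathcal{A})$,
\[
\langle {c(v_\ell^k)}_i, x_\ell\rangle \leq \frac{\|z^k\|_J}{\langle y_\ell^k, {c(v_\ell^k)}_i\rangle} \leq \frac{2\sqrt{2}\,p\,r_{\max}}{k+1} \leq \xi.
\]
By the equivalence in (\ref{eq:upper-bound-direct1}), this is exactly the fourth termination condition~(\ref{eq:terminate}) at block $\ell$ and index $i$, so the procedure has halted.

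The main obstacle will be carefully transferring the smooth perceptron convergence estimate $\|z^k\|_J \leq \frac{2\sqrt{2}}{k+1}$ from its original setting (a single symmetric cone, as treated in \cite{Pena2017}) to the present Cartesian-product setting with the inner product $\langle x,y\rangle = \mbox{trace}(x \circ y)$ and the induced norm $\|\cdot\|_J$. Specifically, I would need to check that the Nesterov-style smoothing, the projection onto the simplex of $\mathcal{K}$, and the step-size prescriptions used inside Algorithm~\ref{bp-alg-sp} preserve the Lipschitz and strong-convexity constants that yield the $O(1/k)$ rate with the constant $2\sqrt{2}$, and that $y^k$ indeed remains in the simplex of $\mathcal{K}$ so that the pigeonhole bound $\|y^k\|_{1,\infty} \geq 1/p$ remains available at every iteration. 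Once these two ingredients are verified, the chain of inequalities above delivers the stated bound of $\frac{2\sqrt{2}\,p\,r_{\max}}{\xi}$ iterations.
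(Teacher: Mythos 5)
Your proposal follows essentially the same route as the paper's proof: cite Proposition~6 of Pena and Soheili to get $\|z^k\|_J \leq \frac{2\sqrt{2}}{k+1}$, reuse the pigeonhole bound $\langle y_\ell^k, c(v_\ell^k)_i\rangle \geq (p r_{\max})^{-1}$ from the proof of Proposition~\ref{prop:bp2}, and feed both into Proposition~\ref{prop:bp1} to show the fourth termination condition fires once $k \geq \frac{2\sqrt{2}\,p\,r_{\max}}{\xi}$. The paper treats the cited $O(1/k)$ rate and the persistence of the simplex constraint $\langle e, y^k\rangle = 1$ under the smooth perceptron updates as given, whereas you flag them as items still to be checked; these are not gaps in the paper's sense, but your caution is well placed.
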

\begin{proof}
From Proposition 6 in~\cite{Pena2017}, after $k \geq 1$ iterations,  we obtain the inequality $\|z^k\|_J^2 \leq \frac{8}{(k+1)^2}$.
Similarly to the previous proof of Proposition \ref{prop:bp2}, if $\xi \geq p r_{max} \|z^k\|_J$ holds, then Algorithm \ref{bp-alg-sp} terminates. 
Thus, $k \leq \frac{2 \sqrt{2} p r_{\max}}{\xi}$ holds for a given $k$ satisfying $\left( \frac{\xi}{p r_{max}} \right)^2 \leq \frac{8}{(k+1)^2}$.
\end{proof}

Here, we discuss the computational cost per iteration of Algorithm  \ref{basic procedure}.
At each iteration, the two most expensive operations are computing the spectral decomposition on line 5 and computing $P_{\mathcal{A}}(\cdot)$ on lines  24 and  26.
Let $C_{\ell}^{\rm sd}$ be the computational cost of the spectral decomposition of an element of $\mathcal{K}_\ell$.
For example, $C_{\ell}^{\rm sd}=\mathcal{O}(r_\ell^3)$ if $\mathcal{K}_\ell = \mathbb{S}^{r_\ell}_+$ and $C_{\ell}^{\rm sd}=\mathcal{O}(r_\ell)$ if $\mathcal{K}_\ell = \mathbb{L}_{r_\ell}$, where $\mathbb{L}_{r_\ell}$ denotes the $r_\ell$-dimensional second-order cone. 
Then, the cost $C^{\rm sd}$ of computing the spectral decomposition of an element of $\mathcal{K}$ is $C^{\rm sd} = \sum_{\ell=1}^p C_{\ell}^{\rm sd}$. Next, let us consider the computational cost of $P_{\mathcal{A}}(\cdot)$. 
Recall that $d$ is the dimension of the Euclidean space $\mathbb{E}$ corresponding to $\mathcal{K}$. 
As discussed in \cite{Bruno2019}, we can compute $P_{\mathcal{A}} = I - \mathcal{A}^*(\mathcal{A}\mathcal{A}^*)^{-1}\mathcal{A}$ by using the Cholesky decomposition of  $(\mathcal{A}\mathcal{A}^*)^{-1}$.
Suppose that $(\mathcal{A}\mathcal{A}^*)^{-1} = LL^*$, where $L$ is an $m \times m$ matrix and we store $L^*\mathcal{A}$ in the main algorithm.
Then, we can compute $P_{\mathcal{A}}(\cdot)$ on lines 24 and 26, which costs $\mathcal{O}(md)$.
The operation $u_\mu (\cdot) : \mathbb{E} \rightarrow \{ u \in \mathcal{K} \mid \langle u,e \rangle = 1\}$ in Algorithm \ref{bp-alg-sp} can be performed within the cost $C^{\rm sd}$~\cite{Soheili2013,Pena2017}.
\modifySecond{
From the above discussion and Proposition \ref{prop:bp2}, the total costs of Algorithm \ref{basic procedure} and Algorithm \ref{bp-alg-sp} are given by $\mathcal{O} \left( \frac{p^2 r_{\max}^2}{\xi^2}  \max ( C^{\rm sd} , md ) \right)$ and $\mathcal{O} \left( \frac{p r_{\max}}{\xi}  \max ( C^{\rm sd} , md ) \right)$, respectively.
}

\begin{algorithm}[H]
 \caption{Basic procedure (von Neumann scheme)}
 \label{basic procedure}
 \begin{algorithmic}[1]
 \renewcommand{\algorithmicrequire}{\textbf{Input: }}
 \renewcommand{\algorithmicensure}{\textbf{Output: }}
 \renewcommand{\stop}{\textbf{stop }}
 \renewcommand{\return}{\textbf{return }}

 \STATE \algorithmicrequire $P_\mathcal{A}$, $y^1 \in \mbox{int} \hspace{0.75mm} \mathcal{K}$ such that $\langle y^1 , e \rangle = 1$ and a constant $\xi$ such that $0 < \xi < 1$
 \STATE \algorithmicensure (i) a solution to ${\rm P}(\mathcal{A})$ or (ii) ${\rm D}(\mathcal{A})$ or (iii) a certificate that, for any feasible solution $x$ to ${\rm P}_{S_\infty}(\mathcal{A})$, $\langle e,x \rangle < r$
 \STATE initialization: $k \leftarrow 1, z^1 \leftarrow P_\mathcal{A}(y^1), v^1 \leftarrow y^1 - z^1, H_1 , \dots , H_p  = \emptyset$
 \WHILE{ $k \leq \frac{p^2 r_{\max}^2}{\xi^2}$}
 \STATE For every $\ell \in \{1 , \dots , p\}$, perform spectral decomposition: $z^k_\ell = \sum_{i=1}^{r_\ell} {\lambda(z_\ell^k)}_i {c(z_\ell^k)}_i$ and $v^k_\ell = \sum_{i=1}^{r_\ell} {\lambda(v_\ell^k)}_i {c(v^k_\ell)}_i$
 \IF {$z^k \in \mbox{int } \mathcal{K}$} 
 \STATE \stop basic procedure and \return $z^k$ (Output (i))
 \ELSIF { $z^k = 0$ or $v^k \in \mathcal{K} \setminus \{0\} $}
 \STATE \stop basic procedure and \return $y^k$ or $v^k$ (Output (ii))
 \ENDIF
 \IF {$\langle v^k , e \rangle  > 0$}
 \FOR {$\ell \in \{1 , \dots , p \}$}
 \STATE $I_\ell \leftarrow \left \{ i  \mid  {\lambda(v_\ell^k)}_i > 0 \right \}$ and then $H_\ell \leftarrow \left \{ i \in I_\ell | \left \langle e , \mathcal{P}_\mathcal{K} \left( -\frac{1}{{\lambda(v_\ell^k)}_i} \modifySecond{v^k} \right) \right \rangle \leq \xi \right \}$
 \ENDFOR
 \ELSE 
 \FOR {$\ell \in \{1 , \dots , p \}$}
 \STATE $I_\ell \leftarrow \left \{ i  \mid  {\lambda(v_\ell^k)}_i < 0 \right\}$ and then $H_\ell \leftarrow \left \{ i \in I_\ell | \left \langle e , \mathcal{P}_\mathcal{K} \left( -\frac{1}{{\lambda(v_\ell^k)}_i} \modifySecond{v^k} \right) \right \rangle \leq \xi \right \}$
 \ENDFOR 
 \ENDIF
 \IF { $ |H_1| + \dots + |H_p| > 0 $}
 \STATE For every $\ell \in \{1 , \dots , p\}$, let $ C_\ell $ be  $\{ {c(v_\ell^k)}_1 , \dots , {c(v_\ell^k)}_{r_\ell} \}$. 
 \STATE \stop basic procedure and \return $H_1 , \dots , H_p$ and $C_1 , \dots , C_p$  (Output (iii))
 \ENDIF
 \STATE Let $u$ be an idempotent such that $\langle e , u \rangle = 1$ and $\langle z^k , u \rangle = \lambda_{\min}(z^k)$
 \STATE $y^{k+1} \leftarrow \alpha y^k + (1-\alpha) u$, where $\alpha = \frac{ \langle P_\mathcal{A} (u) , P_\mathcal{A} (u) - z^k \rangle }{\|z^k-P_\mathcal{A} (u)\|^2_J} $
 \STATE $k \leftarrow k+1$ , $z^k \leftarrow P_\mathcal{A}(y^k)$ and $v^k \leftarrow y^k - z^k$
 \ENDWHILE
 \STATE \return basic procedure error
 \end{algorithmic} 
 \end{algorithm}

\section{Main algorithm of the extended method}
\label{sec: main algorithm}
\subsection{Outline of the main algorithm}
\label{sec:outline of main alg}

In what follows, for a given accuracy $\varepsilon > 0$, we call a feasible solution of ${{\rm P}_{S_{\infty}}}(\mathcal{A})$ whose minimum eigenvalue is $\varepsilon$ or more an {\em  $\varepsilon$-feasible solution of ${{\rm P}_{S_{\infty}}}(\mathcal{A})$}.
This section describes the main algorithm.
To set the upper bound for the minimum eigenvalue of any feasible solution $x$ of \modifySecond{${\rm P}_{S_{\infty}}(\mathcal{A})$}, Algorithm \ref{main algorithm} focuses on the product $\det (\bar{x})$ of the eigenvalues of the arbitrary feasible solution $\bar{x}$ of the scaled problem \modifyFirst{${\rm P}_{S_{\infty}}(\mathcal{A}^k Q^k)$}.
Algorithm \ref{main algorithm} works as follows.
First, we calculate the corresponding projection $P_{\mathcal{A}}$ onto $\mbox{ker} \hspace{0.3mm} \mathcal{A}$ and generate an initial point as input to the basic procedure. 
Next, we call the basic procedure and determine whether to end the algorithm with an $\varepsilon$-feasible solution or to perform problem scaling according to the returned result, as follows:
\begin{enumerate}
\item If a feasible solution of ${\rm P}(\mathcal{A})$ or ${\rm D}(\mathcal{A})$ is returned from the basic procedure, the feasibility of ${\rm P}(\mathcal {A})$ can be determined, and we stop the main algorithm.
\item If the basic procedure returns the sets of indices $H_1 , \dots , H_p$ and the sets of primitive idempotents $C_1 , \dots , C_p$ that construct the corresponding Jordan frames, then 
\modifyFirst{
for the total number of cuts obtaibed in the $\ell$-th block ${\rm num}_\ell$, 
}
\begin{enumerate}
\item if $\mbox{num}_\ell  \geq r_\ell \frac{\log \varepsilon}{\log \xi}$ holds for some $\ell \in \{ 1, \dots p \}$, we  determine that ${{\rm P}_{S_{\infty}}}(\mathcal{A})$ has no $\varepsilon$-feasible solution according to Proposition \ref{prop:lambda-min-upper} and stop the main algorithm, 
\item if $\mbox{num}_\ell  < r_\ell \frac{\log \varepsilon}{\log \xi}$ holds for any $\ell \in \{ 1, \dots p \}$, we rescale the problem and call the basic procedure again.
\end{enumerate}
\end{enumerate}

Note that our main algorithm is similar to Louren\c{c}o et al.'s method in the sense that it keeps information about the possible minimum eigenvalue of any feasible solution of the problem. 
In contrast, Pena and Soheili's method \cite{Pena2017} does not keep such information.
Algorithm \ref{main algorithm} terminates after no more than $-\frac{r}{\log \xi} \log \left( \frac{1}{\varepsilon}\right) - p + 1$ iterations, so our main algorithm can be said to be a polynomial-time algorithm.
We will give this proof in section \ref{sec: finite termination of MA}.
We should also mention that step 24 in Algorithm \ref{main algorithm} is not a reachable output theoretically. We have added this step in order to consider the influence of the numerical error in practice.

 \begin{algorithm}
 \caption{Main algorithm}
 \label{main algorithm}
 \begin{algorithmic}[1]
 \renewcommand{\algorithmicrequire}{\textbf{Input: }}
 \renewcommand{\algorithmicensure}{\textbf{Output: }}
  \renewcommand{\stop}{\textbf{stop }}
 \renewcommand{\return}{\textbf{return }}

 \STATE \algorithmicrequire $\mathcal{A}$, $\mathcal{K}$, $\varepsilon$ and a constant $\xi$ such that $0 < \xi < 1$
 \STATE \algorithmicensure a solution to ${\rm P}(\mathcal{A})$ or ${\rm D}(\mathcal{A})$ or a certificate that there is no $\varepsilon$ feasible solution.
 \STATE $k \leftarrow 1$ , $\mathcal{A}^1 \leftarrow \mathcal{A}$ , $\mbox{num}_\ell \leftarrow 0 $, $\bar{Q_\ell} \leftarrow I_\ell $, \modifyThird{$RP_\ell \leftarrow I_\ell$, $RD_\ell \leftarrow I_\ell$} for all $\ell \in \{1,\dots,p\}$
	 \STATE Compute \modifyFirst{$P_{\mathcal{A}^k}$} and call the basic procedure with \modifyFirst{$P_{\mathcal{A}^k}$}, $\frac{1}{r}e$, $\xi$
  \IF {basic procedure returns $z$ }
 \STATE \stop main algorithm and \return \modifyThird{$RPz$ ($RPz$ is a feasible solution of ${\rm P}(\mathcal{A})$)}
 \ELSIF { basic procedure returns $y$ or $v$ }
 \STATE \stop main algorithm and \return \modifyThird{$RDy$ or $RDv$ ( $RDy$ or $RDv$ is a feasible solution of $D(\mathcal{A})$)}
 \ELSIF  {basic procedure returns $H^k_1 , \dots , H^k_p$ and $C^k_1 , \dots , C^k_p$}
 \FOR {$\ell \in \{1,\dots,p\}$}
 \IF{$|H^k_\ell| > 0$}
 \STATE $g_\ell \leftarrow \sqrt{\xi} \sum_{h \in H^k_\ell} c^k(v_\ell)_h + \sum_{h \notin H_\ell^k}^{r_\ell} c^k(v_\ell)_h$ 
 \STATE $Q_\ell \leftarrow Q_{g_\ell}$, \modifyThird{$RP_\ell \leftarrow RP_\ell Q_{g_\ell}$, $RD_\ell \leftarrow RD_\ell Q_{g^{-1}_\ell}$}
 \STATE $\mbox{num}_\ell \leftarrow |H^k_\ell| + \mbox{num}_\ell $
 \IF { $ \mbox{num}_\ell  \geq r_\ell \frac{\log \varepsilon}{\log \xi}$ }
 \STATE { \stop main algorithm. There is no $\varepsilon$ feasible solution.}
 \ENDIF
 \STATE $\bar{Q_\ell} \leftarrow  Q_{g_\ell^{-1}} \bar{Q_\ell} $
 \ELSE
 \STATE $ Q_\ell \leftarrow I_\ell$
 \ENDIF
 \ENDFOR
 \ELSE
 \STATE \return basic procedure error
 \ENDIF
 \STATE Let $Q^k = (Q_1 , \dots , Q_p)$ 
 \STATE $\mathcal{A}^{k+1} \leftarrow \mathcal{A}^k Q^k$ , $k \leftarrow k+1$. Go back to line 4.
 \end{algorithmic} 
 \end{algorithm}
 
\subsection{Finite termination of the main algorithm}
\label{sec: finite termination of MA}

Here, we discuss how many iterations are required until we can determine that the minimum eigenvalue $\lambda_{\rm min} (x)$ is less than $\varepsilon$ for any $x \in F_{{\rm P}_{S_{\infty}}(\mathcal{A})}$.
Before going into the proof, we explain the Algorithm \ref{main algorithm} in more detail than in section \ref{sec:outline of main alg}.
At each iteration of Algorithm \ref{main algorithm}, it accumulates the number of cuts $|H_\ell^k|$ obtained in the $\ell$-th block and stores the value in ${\rm num}_\ell$. Using ${\rm num}_\ell$, we can compute an upper bound for $\lambda_{\rm min} (x)$ (Proposition \ref{prop:lambda-min-upper}). On line 18, $\bar{Q}_\ell$ is updated to $\bar{Q}_\ell \leftarrow Q_{g_\ell^{-1}} \bar{Q}_\ell $, where $\bar{Q}_\ell$ plays the role of an operator that gives the relation $\bar{x}_\ell = \bar{Q}_\ell (x_\ell)$ for the solution $x$ of the original problem and the solution $\bar{x}$ of the scaled problem. For example, if $|H^1_\ell| > 0$ for $k=1$ (suppose that the cut was obtained in the $\ell$-th block), then the proposed method scales $\mathcal{A}^1_\ell Q^1_\ell$ and the problem to yield $\bar{x}_\ell = Q_{g_\ell^{-1}}(x_\ell)$ for the feasible solution $x$ of the original problem. And if $|H^2_\ell| > 0$ even for $k=2$, then the proposed method scales $\bar{x}$ again, so that $\bar{\bar{x}}_\ell = Q_{g_\ell^{-1}}(\modifyFirst{\bar{x}_\ell}) = \bar{Q}_\ell (x_\ell)$ holds. Note that $\bar{Q}_\ell$ is used only for a concise proof of Proposition \ref{prop:lambda-min-upper}, so it is not essential.

Now, let us derive an upper bound for the minimum eigenvalue $\lambda_{\min}(x_\ell)$ of each $\ell$-th block of $x$ obtained after the $k$-th iteration of Algorithm \ref{main algorithm}.
Proposition \ref{prop:iteration-num-ma} gives an upper bound for the number of iterations of Algorithm \ref{main algorithm}.

\begin{proposition}
\label{prop:lambda-min-upper}
After $k$ iterations of Algorithm \ref{main algorithm}, for any feasible  solution $x$ of ${\rm P}_{S_{\infty}}(\mathcal{A})$ and $ \ell \in \{1 , \dots , p\}$, the $\ell$-th block element $x_\ell$ of $x$ satisfies
\begin{equation}
r_\ell \log {\left( \lambda_{\min}(x_\ell) \right)} \leq \mbox{\rm num}_\ell \log {\xi} . \label{prop:10-0}
\end{equation}
\end{proposition}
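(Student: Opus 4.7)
The plan is to reduce the bound on $\lambda_{\min}(x_\ell)$ to a bound on $\det(x_\ell)$. Since $x$ is feasible for ${\rm P}_{S_\infty}(\mathcal{A})$, the eigenvalues of $x_\ell$ all lie in $(0,1]$ (positivity from $x_\ell \in \mathrm{int}\,\mathcal{K}_\ell$ via Proposition~\ref{prop:lambda-Jordan}, and the upper bound from $\|x\|_\infty \leq 1$). Consequently,
\[
\lambda_{\min}(x_\ell)^{r_\ell} \leq \prod_{i=1}^{r_\ell} \lambda_i(x_\ell) = \det(x_\ell),
\]
so once the determinant bound $\det(x_\ell) \leq \xi^{\mathrm{num}_\ell}$ is established, taking logarithms yields the desired inequality.

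Next I would track the accumulated inverse scaling $\bar{Q}_\ell$ maintained by Algorithm~\ref{main algorithm}. At each iteration $j$ where a cut occurs in block $\ell$, the algorithm uses $g_\ell^{(j)} = \sqrt{\xi}\sum_{h \in H_\ell^{(j)}} c_h + \sum_{h \notin H_\ell^{(j)}} c_h$, whose eigenvalues are $\sqrt{\xi}$ with multiplicity $|H_\ell^{(j)}|$ and $1$ otherwise; hence $\det(g_\ell^{(j)}) = \xi^{|H_\ell^{(j)}|/2}$ by Proposition~\ref{prop:spectral}. Applying Proposition~\ref{q-det-relation}(1) recursively across all cuts in block $\ell$ gives
\[
\det\bigl(\bar{Q}_\ell(x_\ell)\bigr) \;=\; \xi^{-\mathrm{num}_\ell}\,\det(x_\ell).
\]

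The remaining task is to prove $\det(\bar{x}_\ell) \leq 1$ where $\bar{x}_\ell := \bar{Q}_\ell(x_\ell)$. I would show by induction on the iteration count that $\bar{x}_\ell$ lies in the feasible region of the $\ell$-th block of the scaled problem ${\rm P}_{S_\infty}(\mathcal{A}^{k+1})$: the identity $\mathcal{A}^{k+1}(\bar{Q}(x)) = \mathcal{A}(x) = 0$ is immediate from the definition $\mathcal{A}^{k+1} = \mathcal{A}\,Q^1 \cdots Q^k$ and the fact that $\bar{Q}$ inverts the composed forward scaling; positivity $\bar{x}_\ell \in \mathrm{int}\,\mathcal{K}_\ell$ follows from Proposition~\ref{ptop:quadratic} applied to $g_\ell^{(j)^{-1}} \in \mathrm{int}\,\mathcal{K}_\ell$; and the norm bound $\|\bar{x}_\ell\|_\infty \leq 1$ is inherited through the backward counterpart of the inclusion in Proposition~\ref{prop:scaling}. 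Once feasibility is in hand, $\|\bar{x}_\ell\|_\infty \leq 1$ places all eigenvalues of $\bar{x}_\ell$ in $(0,1]$, hence $\det(\bar{x}_\ell) \leq 1$. Substituting into the displayed scaling identity yields $\det(x_\ell) \leq \xi^{\mathrm{num}_\ell}$, and combined with the reduction of the first paragraph this finishes the proof.

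The main obstacle I anticipate is the inductive verification in the previous paragraph that the composed inverse scaling preserves the $\|\cdot\|_\infty \leq 1$ constraint. Proposition~\ref{prop:scaling} supplies only the forward inclusion $Q_g(SR^{\mathrm{Scaled}}) \subseteq SR^{\mathrm{Cut}}$, so one cannot invoke it directly in reverse; closing this gap requires exploiting the invertibility $Q_g^{-1} = Q_{g^{-1}}$ on $\mathrm{int}\,\mathcal{K}$, the precise spectral structure of $g_\ell^{(j)}$, and the cut inequalities of Proposition~\ref{prop:upper-bound-direct} to control the infinity norm after each inverse scaling step.
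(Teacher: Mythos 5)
Your strategy — reducing $\lambda_{\min}(x_\ell)^{r_\ell}\leq\det x_\ell$, tracking $\det$ through the composed scaling via Proposition~\ref{q-det-relation}, and invoking feasibility of $\bar{x}_\ell=\bar{Q}_\ell(x_\ell)$ for the scaled problem to obtain $\det\bar{x}_\ell\leq 1$ — is exactly the paper's own argument. The obstacle you flag is genuine, and the paper does not resolve it either: the inclusion one would need, $SR^{\rm Cut}\subseteq Q_g(SR^{\rm Scaled})$, equivalently $\|Q_{g^{-1}}(x)\|_\infty\leq 1$ for every feasible $x$ satisfying the cut, is in general false. For example, take $\mathcal{K}=\mathbb{S}^2_+$, $\xi=0.3$, $c_1=e_1e_1^T$, $c_2=e_2e_2^T$, $\mathcal{A}(X)=\langle\mathrm{diag}(1,-0.3),X\rangle$, and
\begin{equation}\notag
x=\begin{pmatrix}0.21 & 0.38\\ 0.38 & 0.7\end{pmatrix} .
\end{equation}
Then $x\in\mathbb{S}^2_{++}$ with $\|x\|_\infty\leq 1$ and $\mathcal{A}(x)=0$, and the cut condition (\ref{eq:terminate}) holds with $H=\{1\}$ and $\xi_\ell=\xi$, yet the largest eigenvalue of $Q_{g^{-1}}(x)$ is about $1.39>1$. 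So $\bar{Q}_\ell(x_\ell)$ need not be feasible for the scaled problem, and the paper's proof, which states this step without justification, glosses over the same point you are worried about. What the argument really needs is only the weaker determinant bound $\det\bar{x}_\ell\leq 1$; for a single scaling step this is recoverable from a Hadamard-type inequality $\det x_\ell\leq\prod_{i=1}^{r_\ell}\langle c_i,x_\ell\rangle\leq\xi^{|H_\ell|}$, which bypasses any norm bound on $\bar{x}_\ell$. But propagating such a determinant bound across several iterations — where each later cut is stated only for feasible points of the corresponding rescaled problem — still requires an argument that neither your write-up nor the paper supplies, and that is the part you would need to close before the proof is complete.
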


\begin{proof}
At the end of the $k$-th iteration, any feasible solution $\bar{x}$ of the scaled problem ${\rm P}_{S_{\infty}}(\mathcal{A}^{k+1}) = {\rm P}_{S_{\infty}}(\mathcal{A}^kQ^k)$ obviously satisfies
\begin{equation}
\det \bar{x}_\ell \leq \det e_\ell \ \ (\ell =1,2,\ldots,p).  \label{prop:10-1}
\end{equation}
Note that $\det \bar{x}_\ell$ can be expressed in terms of $\det x_\ell$. For example, if $|H_\ell^1|>0$ when $k=1$, then using Proposition \ref{q-det-relation}, for any feasible solution $\bar{x}$ of ${\rm P}_{S_{\infty}}(\mathcal{A}^{2})$, we find that
\begin{equation}
\det \bar{x}_\ell = \det Q_{g_\ell^{-1}} (x_\ell) = \det(g_\ell^{-1})^2 \det x_\ell =   {\left( \frac{1}{\sqrt{\xi}} \right)}^{2|H_\ell^1|} \det x_\ell =   {\left( \frac{1}{\xi} \right)}^{|H_\ell^1|} \det x_\ell  .\notag 
\end{equation}
This means that $\det \bar{x}_\ell$ can be determined from $\det x_\ell$ and the number of cuts obtained so far in the $\ell$-th block. In Algorithm \ref{main algorithm}, the value of $\mbox{num}_\ell$ is updated only when $|H_\ell^k|>0$. Since $\bar{x}$ satisfies $\bar{x}_\ell = \bar{Q}_\ell (x_\ell) \ (\ell=1,2,\ldots,p)$ for each feasible solution $x$ of ${\rm P}_{S_{\infty}}(\mathcal{A})$, we can see that 
\begin{equation}
\det \bar{x}_\ell = \det \bar{Q_\ell} (x_\ell) =  {\left( \frac{1}{\xi} \right)}^{|H_\ell^k|} \times  {\left( \frac{1}{\xi} \right)}^{|H_\ell^{k-1}|} \dots \times  {\left( \frac{1}{\xi} \right)}^{|H_\ell^1|} \times \det x_\ell =  {\left( \frac{1}{\xi} \right)}^{\mbox{num}_\ell} \det x_\ell . \notag
\end{equation}
Therefore,  (\ref{prop:10-1}) implies $\det x_\ell \leq \xi^{\mbox{num}_\ell} \det e_\ell = \xi^{\mbox{num}_\ell}$ and the fact ${\left( \lambda_{\min}(x_\ell) \right)}^{r_\ell} \leq \det x_\ell$ implies  ${\left( \lambda_{\min}(x_\ell) \right)}^{r_\ell} \leq {\xi}^{\mbox{num}_\ell}$.
By taking the logarithm of both sides of this inequality, we obtain (\ref{prop:10-0}). 
\end{proof}

\begin{proposition}
\label{prop:iteration-num-ma}
Algorithm \ref{main algorithm} terminates after no more than $-\frac{r}{\log \xi} \log \left( \frac{1}{\varepsilon}\right) - p + 1$ iterations.
\end{proposition}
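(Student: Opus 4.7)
Setting $M:=\log\varepsilon/\log\xi>0$, the termination check on line 16 of Algorithm~\ref{main algorithm} reads $\mathrm{num}_\ell\geq r_\ell M$, and the claimed bound becomes $rM-p+1$. The plan is a short two-sided counting argument on the aggregate quantity $\sum_{\ell=1}^{p}\mathrm{num}_\ell$.

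First, I would observe that on every iteration that does not halt the main loop, the basic procedure must have returned output (iii), since outputs (i) and (ii) cause immediate termination at lines 5--8; and for output (iii), Algorithm~\ref{basic procedure} guarantees $|H_1^k|+\cdots+|H_p^k|\geq 1$. Hence the aggregate $\sum_{\ell}\mathrm{num}_\ell$ is incremented by at least one at every surviving iteration, so after $k$ surviving iterations one has $\sum_{\ell}\mathrm{num}_\ell\geq k$. Next, as long as the algorithm has not yet stopped, line 16 has never triggered, so $\mathrm{num}_\ell<r_\ell M$ for every $\ell$; because each $\mathrm{num}_\ell$ is a non-negative integer, this yields $\mathrm{num}_\ell\leq r_\ell M-1$, and summing over $\ell$ gives $k\leq\sum_{\ell}\mathrm{num}_\ell\leq rM-p$. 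The algorithm must therefore halt at iteration $rM-p+1$ at the latest, which is exactly $-\frac{r}{\log\xi}\log(1/\varepsilon)-p+1$.

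I expect the only mildly delicate point to be the integer-rounding step $\mathrm{num}_\ell\leq r_\ell M-1$, which is precisely what produces the $-p+1$ correction relative to the naive estimate $k<rM$; the deeper fact that reaching $\mathrm{num}_\ell\geq r_\ell M$ legitimately certifies the absence of an $\varepsilon$-feasible solution is exactly Proposition~\ref{prop:lambda-min-upper}, but that proposition is not needed for the iteration count itself.
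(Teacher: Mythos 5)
Your proof is correct and takes essentially the same counting approach as the paper's own argument, just reframed cleanly through the aggregate $\sum_\ell\mathrm{num}_\ell$ and making explicit the fact that an output-(iii) return forces $\sum_\ell\mathrm{num}_\ell$ to grow by at least one per surviving iteration. The integer-rounding step $\mathrm{num}_\ell<r_\ell M\Rightarrow\mathrm{num}_\ell\leq r_\ell M-1$ is only exact when $r_\ell M$ is an integer --- rigorously one gets $\mathrm{num}_\ell\leq\lceil r_\ell M\rceil-1$ and hence at most $\sum_\ell\lceil r_\ell M\rceil-p+1$ iterations, which can slightly exceed $rM-p+1$ --- but this same informality is present in the paper's own proof, so your argument is as rigorous as the original.
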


\begin{proof}
Let us call iteration $k$ of Algorithm \ref{main algorithm} {\em good} if $|H_\ell^k| > 0$ for some $\ell \in \{1,2, \dots , p\}$ at that iteration. Suppose that at least $-\frac{r_\ell}{\log \xi} \log \left( \frac{1}{\varepsilon}\right)$ {\em good} iterations are observed for a cone $\mathcal{K}_\ell$. Then, by substituting $-\frac{r_\ell}{\log \xi} \log \left( \frac{1}{\varepsilon}\right)$ into $\mbox{num}_\ell$ of inequality (\ref{prop:10-0}) in Proposition \ref{prop:lambda-min-upper}, we have $\log {\left( \lambda_{\min}(x_\ell) \right)} \leq \log \varepsilon$
and hence, $\lambda_{\min}(x_\ell) \leq \varepsilon$.
This implies that Algorithm \ref{main algorithm} terminates after no more than 
\begin{equation}
\sum_{\ell=1}^p \left( -\frac{r_\ell}{\log \xi} \log \left( \frac{1}{\varepsilon}\right) - 1 \right) + 1 = -\frac{r}{\log \xi} \log \left( \frac{1}{\varepsilon}\right) - p + 1  \notag
\end{equation}
iterations.
\end{proof}

\section{Computational costs of the algorithms}
\label{sec: compare}

This section compares the computational costs of Algorithm \ref{main algorithm}, Louren\c{c}o et al.'s method~\cite{Bruno2019} and Pena and Soheili's method~\cite{Pena2017}.
Section \ref{sec: complexity of MA vs Lourenco} compares the computational costs of Algorithm \ref{main algorithm} and Louren\c{c}o et al.'s method, and
Section \ref{sec: complexity of MA vs Pena} compares those of Algorithm \ref{main algorithm} and Pena and Soheili's method under the assumption that  $\mbox{ker} \hspace{0.3mm} \mathcal{A} \cap  \mbox{int} \hspace{0.75mm} \mathcal{K} \neq  \emptyset$.

Both the proposed method and the method of Louren\c{c}o et al. guarantee finite termination of the main algorithm by termination criteria indicating the nonexistence of an $\varepsilon$-feasible solution, so that it is possible to compare the computational costs of the methods without making any special assumptions. This is because both methods proceed by making cuts to the feasible region using the results obtained from the basic procedure. On the other hand, Pena and Soheili's method cannot be simply compared because the upper bound of the number of iterations of their main algorithm includes an unknown value of $\delta (\mbox{ker} \hspace{0.3mm} \mathcal{A} \cap \mbox{int} \hspace{0.75mm} \mathcal{K}) := \max_x \left \{ {\rm det}(x) \mid x \in \mbox{ker} \hspace{0.3mm} \mathcal{A} \cap \mbox{int} \hspace{0.75mm} \mathcal{K} , \|x\|_J^2 = r \right\}$.
However, by making the assumption $\mbox{ker} \hspace{0.3mm} \mathcal{A} \cap \mbox{int} \hspace{0.75mm} \mathcal{K} \neq \emptyset$ and deriving a lower bound for $\delta (\mbox{ker} \hspace{0.3mm} \mathcal{A} \cap \mbox{int} \hspace{0.75mm} \mathcal{K})$, we make it possible to compare Algorithm \ref{main algorithm} with Pena and Soheili's method without knowing the specific values of $\delta (\mbox{ker} \hspace{0.3mm} \mathcal{A} \cap \mbox{int} \hspace{0.75mm} \mathcal{K})$.
\subsection{Comparison of Algorithm \ref{main algorithm} and Louren\c{c}o et al.'s method}
\label{sec: complexity of MA vs Lourenco}
Let us consider the computational cost of Algorithm \ref{main algorithm}.
At each iteration, the most expensive operation is computing $P_\mathcal{A}$ on line 4.
Recall that $d$ is the dimension of the Euclidean space $\mathbb{E}$ corresponding to $\mathcal{K}$.
As discussed in  \cite{Bruno2019}, by considering $P_\mathcal{A}$ to be an $m \times d$ matrix, 
we find that the computational cost of $P_{\mathcal{A}}$ is  $\mathcal{O}(m^3+m^2d)$.
Therefore,  by taking the computational cost of the basic procedure (Algorithm \ref{basic procedure}) and Proposition \ref{prop:iteration-num-ma} into consideration, the cost of Algorithm \ref{main algorithm} turns out to be
\begin{equation}
\label{eq: computational cost 1}
\mathcal{O} \left( -\frac{r}{\log \xi} \log \left( \frac{1}{\varepsilon}\right) \left( m^3+m^2d +  \frac{1}{\xi^2} p^2 r_{\max}^2  \left( \max \left( C^{\rm sd} , md \right) \right) \right) \right)
\end{equation}
where $C^{\rm sd}$ is the computational cost of the spectral decomposition of $x \in \mathbb{E}$.

Note that, in \cite{Bruno2019}, the authors showed that the cost of their algorithm is
\begin{equation}
\label{eq: computational cost 2}
\mathcal{O} \left( \left( \frac{r}{\varphi(\rho)} \log \left( \frac{1}{\varepsilon} \right) - \sum_{i=1}^p \frac{r_i \log(r_i)}{\varphi(\rho)}\right) \left( m^3 + m^2d +  \rho^2 p^3 r_{max}^2 \left(\max \left(C^{\rm min} , md \right)  \right) \right) \right) 
\end{equation}
where $C^{\rm min}$ is the cost of computing the minimum eigenvalue of $x \in \mathbb{E}$ with the corresponding idempotent, \modifyFirst{$\rho$ is an input parameter used in their basic procedure (like $\xi$ in the proposed algorithm) and $\varphi(\rho) = 2 - 1/\rho - \sqrt{3-2/\rho}$.}

When the symmetric cone is simple, by setting $\xi = 1/2$ and $\rho = 2$, the maximum number of iterations of the basic procedure is bounded by the same value in both algorithms. Accordingly, we will compare the two computational costs (\ref{eq: computational cost 1}) and (\ref{eq: computational cost 2}) by supposing $\xi = 1/2$ and $\rho = 2$ (hence, $- \log \xi \simeq 0.69$ and $\varphi (\rho) \simeq 0.09$). As we can see below, the cost (\ref{eq: computational cost 1}) of our method is smaller than (\ref{eq: computational cost 2}) in the cases of linear programming and second-order cone problems and is equivalent to (\ref{eq: computational cost 2}) in the case of semidefinite problems.
First, let us consider the case where $\mathcal{K}$ is the $n$-dimensional nonnegative orthant $\mathbb{R}^n_+$.
Here, we see that $r=p=d=n$, $r_1 = \cdots = r_p = r_{\max}=1$, and $\max \left( C^{\rm sd} , md \right) = \max \left( C^{\rm min} , md \right) = md$ hold.
By substituting these values, the bounds  (\ref{eq: computational cost 1}) and  (\ref{eq: computational cost 2}) turn out to be
\begin{equation}
\mathcal{O} \left( \frac{n}{0.69} \log \left( \frac{1}{\varepsilon} \right) \left( m^3 + m^2n + 4 mn^3 \right) \right)  \notag
\end{equation}
and
\begin{equation}
\mathcal{O} \left( \frac{n}{0.09} \log \left( \frac{1}{\varepsilon} \right) \left(  m^3 + m^2n + 4 mn^4 \right) \right).  \notag
\end{equation}
This implies that for the linear programming case, our method (which is equivalent to Roos's original method \cite{Roos2018}) is superior to Louren\c{c}o et al.'s method \cite{Bruno2019} in terms of bounds  (\ref{eq: computational cost 1}) and  (\ref{eq: computational cost 2}) .

Next, let us consider the case where $\mathcal{K}$ is composed of $p$ simple second-order cones $\mathbb{L}^{n_i} \ (i= 1, \ldots, p)$.
In this case, we see that $d = \sum_{i=1}^p n_i$, $r_1 = \cdots = r_p = r_{\max}=2$ and $\max \left( C^{\rm sd} , md \right) = \max \left( C^{\rm min} , md \right) = md$ hold.
By substituting these values, the bounds  (\ref{eq: computational cost 1}) and  (\ref{eq: computational cost 2}) turn out to be
\begin{equation}
\mathcal{O} \left( \frac{2p}{0.69} \log \left( \frac{1}{\varepsilon} \right) \left( m^3 + m^2d + 16p^2md \right) \right)  \notag
\end{equation}
and
\begin{equation}
\mathcal{O} \left(  \frac{2p}{0.09} \left( \log \left( \frac{1}{\varepsilon} \right) -  \log 2 \right) \left(  m^3 + m^2d + 16  p^3 md \right) \right).  \notag
\end{equation}
Note that $\varepsilon$ is expected to be very small ($10^{-6}$ or even $10^{-12}$ in practice) and $\frac{1}{0.69} \log \left( \frac{1}{\varepsilon} \right) \leq \frac{1}{0.09} \left( \log \left( \frac{1}{\varepsilon} \right)  -  \log 2 \right)$ if $\varepsilon \leq 0.451$.
Thus, even in this case, we may conclude that our method is superior to Louren\c{c}o et al.'s method in terms of the  bounds  (\ref{eq: computational cost 1}) and  (\ref{eq: computational cost 2}) .

Finally, let us consider the case where $\mathcal{K}$ is a simple $n \times n$ positive semidefinite cone. We see that $p=1$, $r = n$, and $d = \frac{n(n+1)}{2}$ hold, and upon substituting these values, the bounds (\ref{eq: computational cost 1}) and (\ref{eq: computational cost 2}) turn out to be
\begin{equation}
\mathcal{O} \left( \frac{n}{0.69} \log \left( \frac{1}{\varepsilon} \right) \left(m^3 + m^2n^2 + 4n^2  \max \left( C^{\rm sd} , mn^2 \right)  \right) \right)  \notag
\end{equation}
and 
\begin{equation}
\mathcal{O} \left( \frac{n}{0.09} \log \left( \frac{1}{\varepsilon} \right) \left(m^3 + m^2n^2 + 4n^2 \max ( C^{\rm min} , mn^2) \right) \right).  \notag
\end{equation}
From the discussion in Section \ref{sec: CSD and Cmin}, we can assume $\mathcal{O}\left( C^{\rm sd} \right) =  \mathcal{O}\left( C^{\rm min} \right)$, and  the computational bounds of two methods are equivalent.

\subsection{Comparison of Algorithm \ref{main algorithm} and Pena and Soheili's method}
\label{sec: complexity of MA vs Pena}

In this section, we assume that $\mathcal{K}$ is simple since \modifyFirst{~\cite{Pena2017} has presented an algorithm for the simple form.} We also assume that $\mbox{{\rm ker}} \hspace{0.3mm} \mathcal{A} \cap \mbox{{\rm int}} \hspace{0.75mm} \mathcal{K} \neq \emptyset$, because Pena and Soheili's method does not terminate if $\mbox{{\rm ker}} \hspace{0.3mm} \mathcal{A} \cap \mbox{{\rm int}} \hspace{0.75mm} \mathcal{K} = \emptyset$ and $\mbox{{\rm range}} \mathcal{A}^* \cap \mbox{{\rm int}} \hspace{0.75mm} \mathcal{K} = \emptyset$. Furthermore, for the sake of simplicity, we assume that the main algorithm of Pena and Soheili's method applies only to $\mbox{{\rm ker}} \hspace{0.3mm} \mathcal{A} \cap \mbox{{\rm int}} \hspace{0.75mm} \mathcal{K}$. (Their original method applies the main algorithm to $\mbox{{\rm range}} \mathcal{A}^* \cap \mbox{{\rm int}} \hspace{0.75mm} \mathcal{K}$ as well.)

First, we will briefly explain the idea of deriving an upper bound for the number of iterations required to find $x \in \mbox{{\rm ker}} \hspace{0.3mm} \mathcal{A} \cap \mbox{{\rm int}} \hspace{0.75mm} \mathcal{K}$ in Pena and Soheili's method. Pena and Soheili derive it by focusing on the indicator $\delta (\mbox{ker} \hspace{0.3mm} \mathcal{A} \cap \mbox{int} \hspace{0.75mm} \mathcal{K}) := \max_x \left \{ {\rm det}(x) \mid x \in \mbox{ker} \hspace{0.3mm} \mathcal{A} \cap \mbox{int} \hspace{0.75mm} \mathcal{K} , \|x\|_J^2 = r \right\}$. If $\mbox{{\rm ker}} \hspace{0.3mm} \mathcal{A} \cap \mbox{{\rm int}} \hspace{0.75mm} \mathcal{K} \neq \emptyset$, then $\delta (\mbox{ker} \hspace{0.3mm} \mathcal{A} \cap \mbox{int} \hspace{0.75mm} \mathcal{K}) \in (0,1]$ holds, and if $e \in \mbox{{\rm ker}} \hspace{0.3mm} \mathcal{A} \cap \mbox{{\rm int}} \hspace{0.75mm} \mathcal{K}$, then $\delta (\mbox{ker} \hspace{0.3mm} \mathcal{A} \cap \mbox{int} \hspace{0.75mm} \mathcal{K}) =1$ holds. If $e \in \mbox{{\rm ker}} \hspace{0.3mm} \mathcal{A} \cap \mbox{{\rm int}} \hspace{0.75mm} \mathcal{K}$, then the basic procedure terminates immediately and returns $\frac{1}{r}e$ as a feasible solution. Then, they prove that $\delta (Q_v \left( \mbox{ker} \hspace{0.3mm} \mathcal{A} \right) \cap \mbox{int} \hspace{0.75mm} \mathcal{K}) \geq 1.5 \cdot \delta (\mbox{ker} \hspace{0.3mm} \mathcal{A} \cap \mbox{int} \hspace{0.75mm} \mathcal{K})$ holds if the parameters are appropriately set, and derive an upper bound on the number of scaling steps, i.e., the number of iterations, required to obtain $\delta (Q_v \left( \mbox{ker} \hspace{0.3mm} \mathcal{A} \right) \cap \mbox{int} \hspace{0.75mm} \mathcal{K}) = 1$.

In the following, we obtain an upper bound for the number of iterations of Algorithm \ref{main algorithm} using the index $\delta^{{\rm supposed}} \left( \mbox{{\rm ker}} \hspace{0.3mm} \mathcal{A} \cap \mbox{int} \hspace{0.75mm} \mathcal{K} \right) := \max_x \left \{ {\rm det}(x) \mid x \in \mbox{ker} \hspace{0.3mm} \mathcal{A} \cap \mbox{int} \hspace{0.75mm} \mathcal{K} , \|x\|_J^2 = 1 \right \}$. Note that $\delta \left( \mbox{{\rm ker}} \hspace{0.3mm} \mathcal{A} \cap \mbox{int} \hspace{0.75mm} \mathcal{K} \right) = r^{\frac{r}{2}} \cdot \delta^{{\rm supposed}} \left( \mbox{{\rm ker}} \hspace{0.3mm} \mathcal{A} \cap \mbox{int} \hspace{0.75mm} \mathcal{K} \right)$. In fact, if $x^*$ is the point giving the maximum value of $\delta^{{\rm supposed}} \left( \mbox{{\rm ker}} \hspace{0.3mm} \mathcal{A} \cap \mbox{int} \hspace{0.75mm} \mathcal{K} \right)$, then the point giving the maximum value of $\delta \left( \mbox{{\rm ker}} \hspace{0.3mm} \mathcal{A} \cap \mbox{int} \hspace{0.75mm} \mathcal{K} \right)$ is $\sqrt{r} x^*$. Also, if $\mbox{{\rm ker}} \hspace{0.3mm} \mathcal{A} \cap \mbox{{\rm int}} \hspace{0.75mm} \mathcal{K} \neq \emptyset$, then $\delta^{{\rm supposed}} (\mbox{ker} \hspace{0.3mm} \mathcal{A} \cap \mbox{int} \hspace{0.75mm} \mathcal{K}) \in (0,1/ r^{\frac{r}{2}} ]$, and if $\frac{1}{\sqrt{r}} e \in \mbox{{\rm ker}} \hspace{0.3mm} \mathcal{A} \cap \mbox{{\rm int}} \hspace{0.75mm} \mathcal{K}$, then $\delta^{{\rm supposed}} (\mbox{ker} \hspace{0.3mm} \mathcal{A} \cap \mbox{int} \hspace{0.75mm} \mathcal{K}) =1/ r^{\frac{r}{2}}$.

The outline of this section is as follows: First, we show that a lower bound for $\delta^{{\rm supposed}} \left( \mbox{{\rm ker}} \hspace{0.3mm} \mathcal{A} \cap \mbox{int} \hspace{0.75mm} \mathcal{K} \right)$ can be derived using the index value $\delta^{{\rm supposed}} \left( Q_{g^{-1}} \left( \mbox{{\rm ker}} \hspace{0.3mm} \mathcal{A} \right) \cap \mbox{int} \hspace{0.75mm} \mathcal{K} \right)$ for the problem after scaling (Proposition \ref{prop:compare scaling}). Then, using this result, we derive an upper bound for the number of operations required to obtain $\delta^{{\rm supposed}} \left( Q_{g^{-1}} \left( \mbox{{\rm ker}} \hspace{0.3mm} \mathcal{A} \right) \cap \mbox{int} \hspace{0.75mm} \mathcal{K} \right) = 1/ r^{\frac{r}{2}}$ (Proposition \ref{pro:compare iteration}). Finally, we compare the proposed method with Pena and Soheili's method.
To prove Proposition \ref{prop:compare tr} used in the proof of Proposition \ref{prop:compare scaling}, we use the following propositions from \cite{JNW1934}.
\begin{proposition}[Theorem 3 of~\cite{JNW1934}]
\label{JNW_Th3}
Let $c \in \mathbb{E}$ be an idempotent and $N_\lambda (c)$ be the set such that $N_\lambda (c) = \{ x \in \mathbb{E} \mid c \circ x = \lambda x\}$.
Then $N_\lambda(c)$ is a linear maniforld, but if $\lambda \neq 0, \frac{1}{2}$, and $1$, then $N_\lambda (c)$ consists of zero alone. 
Each $x \in \mathbb{E}$ can be represented in the form
\begin{equation}
\notag
\begin{array}{lccc}
x = u + v + w,	&u \in N_0(c),	&v \in N_{\frac{1}{2}}(c),	&w \in N_1(c),
\end{array}
\end{equation}
in one and only one way.
\end{proposition}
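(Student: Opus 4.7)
The statement is the classical Peirce decomposition of a Euclidean Jordan algebra with respect to an idempotent $c$, and my plan is to work at the level of the multiplication operator $L_c \colon \mathbb{E} \to \mathbb{E}$ given by $L_c(x) := c \circ x$. First I would observe that the Jordan-algebra axiom $\langle x \circ y, z \rangle = \langle y, x \circ z \rangle$ forces $L_c$ to be self-adjoint with respect to $\langle \cdot , \cdot \rangle$, so $L_c$ is diagonalizable over $\mathbb{R}$ and $\mathbb{E}$ splits as the orthogonal direct sum of its eigenspaces. Each $N_\lambda(c) = \ker(L_c - \lambda I)$ is then automatically a linear subspace, settling the ``linear manifold'' assertion for every $\lambda$; what remains is to show that only $\lambda \in \{0,\tfrac{1}{2},1\}$ can produce a nonzero eigenspace and to extract the decomposition $x = u + v + w$ with the claimed uniqueness.

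The heart of the proof is the operator identity
\begin{equation*}
L_c (L_c - I)(2 L_c - I) \;=\; 0,
\end{equation*}
equivalently $2 L_c^3 - 3 L_c^2 + L_c = 0$. The plan for deriving it is to linearize the Jordan identity $x \circ (x^2 \circ y) = x^2 \circ (x \circ y)$, which as an operator equation reads $[L_x, L_{x^2}] = 0$. Substituting $x = c + t w$ for arbitrary $w \in \mathbb{E}$ and scalar $t$ and using $c^2 = c$, the coefficient of $t^1$ gives the polarized relation $2 [L_c, L_{c \circ w}] = [L_c, L_w]$ valid for every $w$. Iterating this (replacing $w$ by $c \circ w$, etc.) and invoking power-associativity of the unital subalgebra generated by $c$ (so that $c^n = c$ for every $n \geq 1$) collapses the free $w$-dependence into the cubic polynomial identity above. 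The main obstacle I expect is the bookkeeping of this polarization and iteration; the resulting identity itself is classical, going back to Jordan--von Neumann--Wigner.

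Once the polynomial identity is established, the minimal polynomial of $L_c$ divides $t(t - \tfrac{1}{2})(t - 1)$, so the only possible eigenvalues of $L_c$ are $0, \tfrac{1}{2}, 1$, forcing $N_\lambda(c) = \{0\}$ for every other $\lambda$. Diagonalizability then yields $\mathbb{E} = N_0(c) \oplus N_{1/2}(c) \oplus N_1(c)$. For the existence of the decomposition $x = u + v + w$, I would apply to $x$ the three Lagrange-interpolation projectors
\begin{equation*}
\pi_0 = (I - L_c)(I - 2 L_c), \qquad \pi_{1/2} = 4 L_c (I - L_c), \qquad \pi_1 = L_c (2 L_c - I),
\end{equation*}
which take the values $1$ and $0$ on $\{0, \tfrac{1}{2}, 1\}$ in the correct pattern; the identities $\pi_0 + \pi_{1/2} + \pi_1 = I$ and $\pi_i \pi_j = 0$ for $i \neq j$ then follow formally from the cubic polynomial identity. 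Uniqueness is a direct-sum fact: if $u + v + w = u' + v' + w'$ with components in the three eigenspaces, then applying $\pi_0, \pi_{1/2}, \pi_1$ in turn forces $u = u'$, $v = v'$, and $w = w'$.
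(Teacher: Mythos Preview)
The paper does not supply its own proof of this statement; it is simply quoted as Theorem~3 of Jordan--von Neumann--Wigner~\cite{JNW1934}. Your outline is the standard modern derivation of the Peirce decomposition and is essentially correct: self-adjointness of $L_c$, the cubic annihilator $2L_c^3 - 3L_c^2 + L_c = 0$, and the explicit Lagrange projectors together give everything claimed.

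There is, however, one genuine gap in your derivation of the cubic identity. From the polarized relation $2[L_c, L_{c\circ w}] = [L_c, L_w]$, replacing $w$ by $c\circ w$ and iterating only yields $[L_c, L_{L_c^k w}] = 2^{-k}[L_c, L_w]$, a statement about the multiplication operators $L_{L_c^k w}$ rather than about the powers $L_c^k$; invoking $c^n = c$ does not bridge that gap, since it concerns elements and not compositions of $L_c$. The quickest repair is to \emph{evaluate} your commutator identity at the element $c$ itself: using $L_c(c)=c$ and $L_{c\circ a}(c) = (c\circ a)\circ c = L_c^2(a)$, the equation $2[L_c, L_{c\circ a}](c) = [L_c, L_a](c)$ becomes
\[
2\bigl(L_c^3(a) - L_c^2(a)\bigr) \;=\; L_c^2(a) - L_c(a)\qquad\text{for every } a\in\mathbb{E},
\]
which is exactly $2L_c^3 - 3L_c^2 + L_c = 0$. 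With this one-line fix the remainder of your argument goes through unchanged.
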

\begin{proposition}[Theorem 11 of~\cite{JNW1934}]
\label{JNW_Th11}
$c \in \mathbb{E}$ is a primitive idempotent if and only if $N_1(c) =\{ x \in \mathbb{E} \mid c \circ x = x\} = \mathbb{R} c$.
\end{proposition}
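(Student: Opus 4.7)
The strategy is to combine the Peirce decomposition from Proposition~\ref{JNW_Th3} with the classical fact that $N_1(c)$ is itself a Euclidean Jordan subalgebra of $\mathbb{E}$ whose identity is $c$, so that the spectral theorem (Proposition~\ref{prop:spectral}) can be applied inside $N_1(c)$. This is the main lever for the forward direction; for the converse, a direct contradiction argument based on expanding $c=\sum e_j$ suffices.

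For the forward direction, the inclusion $\mathbb{R}c\subseteq N_1(c)$ is immediate from $c\circ(\lambda c)=\lambda c$. For the reverse, I would first verify that the only idempotents of $N_1(c)$ are $0$ and $c$: if $f\in N_1(c)$ is an idempotent, then $c-f\in N_1(c)$ is also an idempotent (since $(c-f)^2=c-2f+f=c-f$ using $c\circ f=f$) and $f\circ(c-f)=f-f=0$, so $\{f,c-f\}$ is a pair of mutually orthogonal nonzero idempotents summing to $c$ unless $f\in\{0,c\}$; primitivity of $c$ rules out the nontrivial case. Applying the spectral theorem within $N_1(c)$ then shows that every $x\in N_1(c)$ admits a decomposition $x=\sum_i\mu_i f_i$ into primitive idempotents of $N_1(c)$ summing to the identity $c$; since $c$ is the unique nonzero idempotent of $N_1(c)$, the decomposition collapses to $x=\mu c$, giving $N_1(c)\subseteq\mathbb{R}c$.

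For the converse, assume $N_1(c)=\mathbb{R}c$ and suppose for contradiction that $c=\sum_{j=1}^{k}e_j$ with $k\geq 2$ and each $e_j$ a nonzero idempotent. I would first establish pairwise orthogonality of the $e_j$ via a trace/norm identity: squaring and using $c^2=c$ and $e_j^2=e_j$ gives $\sum_{j<j'}e_j\circ e_{j'}=0$, and taking traces yields $\sum_{j<j'}\langle e_j,e_{j'}\rangle=0$; self-duality of $\mathcal{K}$ forces each $\langle e_j,e_{j'}\rangle\geq 0$, so every such inner product vanishes, and a standard EJA argument upgrades this to $e_j\circ e_{j'}=0$ for all $j\neq j'$. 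Consequently $c\circ e_j=e_j^2+\sum_{j'\neq j}e_{j'}\circ e_j=e_j$, so $e_j\in N_1(c)=\mathbb{R}c$; writing $e_j=\lambda_j c$ and invoking $e_j^2=e_j$ forces $\lambda_j\in\{0,1\}$, and $e_j\neq 0$ yields $\lambda_j=1$, so $e_j=c$ for every $j$. But then $c=kc$ with $k\geq 2$ forces $c=0$, contradicting $e_j\neq 0$.

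The main obstacle is the justification of two auxiliary EJA facts that are invoked but not derived in the excerpt: the subalgebra closure $N_1(c)\circ N_1(c)\subseteq N_1(c)$ needed to apply Proposition~\ref{prop:spectral} inside $N_1(c)$, and the implication $\langle e,f\rangle=0\Rightarrow e\circ f=0$ for idempotents $e,f\in\mathcal{K}$. Both are classical consequences of the Peirce multiplication rules and rely on the Jordan identity applied with $c$ held fixed; in a complete write-up they would be imported by citation to~\cite{Faraut1994,JNW1934}. The rest of the argument is then short and direct.
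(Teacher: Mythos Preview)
The paper does not supply its own proof of this proposition; it is merely quoted as Theorem~11 of~\cite{JNW1934} and used as a black box in the proof of Proposition~\ref{prop:compare tr}. So there is no in-paper argument to compare against.

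On its own merits your argument is essentially correct. The forward direction is clean once one grants that $N_1(c)$ is a Euclidean Jordan subalgebra with identity $c$: your observation that any idempotent $f\in N_1(c)$ produces an orthogonal splitting $c=f+(c-f)$ into idempotents, combined with primitivity of $c$, pins down $f\in\{0,c\}$; the spectral theorem inside the resulting rank-one algebra then forces $N_1(c)=\mathbb{R}c$. The converse is also fine: squaring $c=\sum e_j$ and taking traces does give $\langle e_j,e_{j'}\rangle=0$ for $j\neq j'$, and the upgrade to $e_j\circ e_{j'}=0$ is the standard fact that $a,b\in\mathcal{K}$ with $\langle a,b\rangle=0$ implies $a\circ b=0$. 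Both auxiliary facts you flag---closure $N_1(c)\circ N_1(c)\subseteq N_1(c)$ and the orthogonality upgrade---are exactly the nontrivial external inputs, and both are indeed available in \cite{Faraut1994} (Peirce multiplication rules, Proposition~IV.1.1 and surrounding material) or in \cite{JNW1934}. One small omission: for the converse you should first note that $N_1(c)=\mathbb{R}c$ with $c\neq 0$ already forces $c$ to be an idempotent (since $c\in\mathbb{R}c=N_1(c)$ gives $c\circ c=c$), so that the decomposition hypothesis $c=\sum e_j$ makes sense.
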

\begin{proposition}
\label{prop:compare tr}
Let $c \in \mathbb{E}$ be a primitive idempotent.
Then, for any $x \in \mathbb{E}$, $\langle x, Q_c(x) \rangle =  \langle x,c \rangle^2$ holds.
\end{proposition}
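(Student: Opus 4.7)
The plan is to reduce the computation of $\langle x, Q_c(x) \rangle$ to a Peirce-type decomposition of $x$ with respect to the primitive idempotent $c$. By Proposition \ref{JNW_Th3}, any $x \in \mathbb{E}$ admits the unique representation $x = u + v + w$ with $u \in N_0(c)$, $v \in N_{1/2}(c)$, and $w \in N_1(c)$; that is, $c \circ u = 0$, $c \circ v = \tfrac{1}{2} v$, and $c \circ w = w$. Since $c$ is primitive, Proposition \ref{JNW_Th11} forces $N_1(c) = \mathbb{R} c$, so $w = \alpha c$ for some $\alpha \in \mathbb{R}$.

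The first step is a direct computation of $Q_c(x) = 2 c \circ (c \circ x) - c^2 \circ x$ on this decomposition. Using $c^2 = c$ together with the eigenvalue relations above, a short expansion gives $c \circ x = \tfrac{1}{2} v + \alpha c$, and hence $c \circ (c \circ x) = \tfrac{1}{4} v + \alpha c$. Substituting back, the $u$ and $v$ contributions cancel and we obtain $Q_c(x) = \alpha c$.

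The second step is to read off $\langle x, Q_c(x) \rangle = \alpha \langle x, c \rangle$ and to verify that both factors equal $\alpha$. Using the adjoint identity $\langle a \circ b, d \rangle = \langle b, a \circ d \rangle$ together with $c \circ c = c$, I get $\langle u, c \rangle = \langle c \circ u, c \rangle = 0$, and $\langle v, c \rangle = \langle c \circ v, c \rangle = \tfrac{1}{2} \langle v, c \rangle$, which forces $\langle v, c \rangle = 0$. Combined with $\langle c, c \rangle = \mathrm{trace}(c \circ c) = \mathrm{trace}(c) = 1$, which holds because every primitive idempotent belongs to a Jordan frame whose elements sum to $e$ and each therefore has trace $1$, this yields $\langle x, c \rangle = \alpha$ and $\langle x, Q_c(x) \rangle = \alpha^2 = \langle x, c \rangle^2$.

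The main obstacle is very mild: one only has to keep the Peirce bookkeeping tidy and to justify the normalization $\mathrm{trace}(c) = 1$, which is a standard fact for primitive idempotents in a Euclidean Jordan algebra but is not recalled explicitly in the paper, so I would state it in one line before concluding.
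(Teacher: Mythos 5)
Your proof is correct and follows essentially the same route as the paper's: both use the Peirce decomposition from Propositions \ref{JNW_Th3} and \ref{JNW_Th11} to write $x = u + v + \lambda c$, compute $Q_c(x) = \lambda c$ by direct expansion, and verify $\langle x, c \rangle = \lambda$ via the adjoint identity. Your only addition is the explicit remark that $\langle c, c \rangle = \mathrm{trace}(c) = 1$ for a primitive idempotent, a standard fact the paper uses implicitly, so stating it in one line as you propose is a reasonable and harmless clarification.
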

\begin{proof}
From Propositions \ref{JNW_Th3} and \ref{JNW_Th11}, for any $x \in \mathbb{E}$, there exist a real number $\lambda \in \mathbb{R}$ and elements $u \in N_0(c)$ and $v \in N_{\frac{1}{2}}(c)$ such that $x =u + v + \lambda c $. \\
First, we show that $\langle x,c \rangle = \lambda$. For $v \in N_\frac{1}{2} (c)$, we see that $\langle v,c \rangle = \langle v, c \circ c \rangle = \langle v \circ c, c \rangle = \langle c \circ v, c \rangle = \frac{1}{2} \langle v, c \rangle$, which implies that $\langle v,c \rangle =0$. Thus, since $u \in N_0(c)$ and $u \circ c = 0$, $\langle x,c \rangle$ is given by
\begin{equation}
\notag
\langle x,c \rangle = \langle u + v + \lambda c , c  \rangle  = \langle u,c \rangle + \langle v,c \rangle = \lambda \langle c,c \rangle = 0 + 0 + \lambda.
\end{equation}
On the other hand, by using the facts $x = u + v + \lambda c$, $c^2 = c$, $c \circ u =0$ and $c \circ v =\frac{1}{2} v$ repeatedly, we have 
\begin{align*}
\langle x, Q_c(x) \rangle 
&= \langle x, 2 c \circ (c \circ x) - c^2 \circ x \rangle \\
&=\langle x, 2 c \circ (c \circ (u + v + \lambda c)) - c \circ (u + v + \lambda c)\rangle \\
&= \langle x, 2 c \circ (\frac{1}{2} v + \lambda c) - ( \frac{1}{2}v + \lambda c ) \rangle \\
&= \langle x, (\frac{1}{2} v + 2 \lambda c) - ( \frac{1}{2}v + \lambda c )  \rangle = \langle x,  \lambda c  \rangle =  \lambda^2.
\end{align*}
Thus, we have shown that $\langle x, Q_c(x) \rangle = \langle x,c \rangle ^2$ holds.
\end{proof}
\begin{remark}
\label{remark:pena}
It should be noted that the proof of Proposition 3 in \cite{Pena2017} is not correct since equation (14) does not necessarily hold.
The above Proposition \ref{prop:compare tr} also gives a correct proof of Proposition 3 in \cite{Pena2017}. See the computation $\langle y , Q_{g^{-2}} (y) \rangle$ in the proof of Proposition \ref{prop:compare scaling}.
\end{remark}
\begin{proposition}
\label{prop:compare scaling}
Suppose that $\mbox{{\rm ker}} \hspace{0.3mm} \mathcal{A} \cap  \mbox{{\rm int}} \hspace{0.75mm} \mathcal{K} \neq  \emptyset$ and that, for a given nonempty index set $H \subseteq \{1 , \dots r \}$, Jordan frame $c_1 , \dots , c_r $, and $0 < \xi < 1$,
\begin{equation}
\notag
\langle c_i , x \rangle  \leq  \xi \  ( i \in H), \ \  \langle c_i , x \rangle \leq 1 \ ( i \notin H)
\end{equation}
holds for any $x \in F_{{\rm P}_{S_{\infty}}(\mathcal{A})}$.
Define $g \in \mbox{\rm int} \hspace{0.75mm} \mathcal{K}$ as $g := \sqrt{\xi} \sum_{h \in H} c_h + \sum_{h \notin H} c_h$.
Then, the following inequality holds:
\begin{equation}
\notag
\delta^{{\rm supposed}} \left( \mbox{{\rm ker}} \hspace{0.3mm} \mathcal{A} \cap  {\rm int} \hspace{0.75mm} \mathcal{K} \right) > \xi \cdot \delta^{{\rm supposed}} \left( Q_{g^{-1}} \left(\mbox{{\rm ker}} \hspace{0.3mm} \mathcal{A} \right) \cap  {\rm int} \hspace{0.75mm} \mathcal{K} \right).
\end{equation}
\end{proposition}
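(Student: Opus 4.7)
The plan is to turn a maximiser of the right-hand side into a witness for the left-hand side via the quadratic representation $Q_g$. Let $y^{\star} \in Q_{g^{-1}}(\mbox{ker}\,\mathcal{A}) \cap \mbox{int}\,\mathcal{K}$ with $\|y^{\star}\|_J^{\,2} = 1$ attain $\delta^{\mathrm{supposed}}(Q_{g^{-1}}(\mbox{ker}\,\mathcal{A}) \cap \mbox{int}\,\mathcal{K})$, and set $x^{\star} := Q_g(y^{\star})$. Since $g \in \mbox{int}\,\mathcal{K}$, Proposition \ref{ptop:quadratic} together with the bijectivity of $Q_g$ on $\mbox{int}\,\mathcal{K}$ gives $x^{\star} \in \mbox{int}\,\mathcal{K}$, and linearity of $\mathcal{A}$ gives $x^{\star} \in \mbox{ker}\,\mathcal{A}$, so the normalisation $\bar{x} := x^{\star}/\|x^{\star}\|_J$ is admissible for the left-hand side. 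Since the Jordan eigenvalues of $g$ are $\sqrt{\xi}$ with multiplicity $|H|$ and $1$ with multiplicity $r-|H|$, we have $\det(g)^{2} = \xi^{|H|}$, so Proposition \ref{q-det-relation}(1) gives $\det(x^{\star}) = \xi^{|H|} \det(y^{\star})$ and hence $\det(\bar{x}) = \xi^{|H|}\det(y^{\star})/\|x^{\star}\|_J^{\,r}$. The proposition therefore reduces to the single norm estimate $\|x^{\star}\|_J^{\,r} < \xi^{|H|-1}$.

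To analyse $\|x^{\star}\|_J$, I would use self-adjointness of $Q_{g^{-1}}$ and the identity $Q_{g^{-2}} = Q_{g^{-1}}^{2}$ from Proposition \ref{q-det-relation}(2) (applied with $x = e$) to write
\begin{equation*}
1 \;=\; \|y^{\star}\|_J^{\,2} \;=\; \langle Q_{g^{-1}}(x^{\star}),\,Q_{g^{-1}}(x^{\star})\rangle \;=\; \langle x^{\star},\,Q_{g^{-2}}(x^{\star})\rangle,
\end{equation*}
and then Peirce-decompose $x^{\star} = \sum_i \beta_i c_i + \sum_{i<j} x^{\star}_{ij}$ with respect to the Jordan frame $c_1,\ldots,c_r$ that appears in the hypothesis. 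Applying Proposition \ref{prop:compare tr} on each primitive idempotent to extract the diagonal weights, together with the fact that $Q_{g^{-2}}$ acts on the Peirce component $(i,j)$ as multiplication by $(\xi_i \xi_j)^{-1}$ (where $\xi_i = \xi$ for $i \in H$ and $\xi_i = 1$ otherwise), produces the two companion identities
\begin{equation*}
\|x^{\star}\|_J^{\,2} \;=\; \sum_{i=1}^{r} \beta_i^{\,2} + \sum_{i<j} \|x^{\star}_{ij}\|_J^{\,2},
\qquad
1 \;=\; \sum_{i=1}^{r} \xi_i^{-2}\beta_i^{\,2} + \sum_{i<j} (\xi_i\xi_j)^{-1} \|x^{\star}_{ij}\|_J^{\,2}.
\end{equation*}
Because $x^{\star} \in \mbox{int}\,\mathcal{K}$, any $h \in H$ satisfies $\beta_h = \langle c_h, x^{\star}\rangle > 0$, and a term-by-term comparison of the two displays immediately yields the qualitative bound $\|x^{\star}\|_J^{\,2} < 1$.

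To upgrade this to the quantitative inequality $\|x^{\star}\|_J^{\,r} < \xi^{|H|-1}$, I would invoke the hypothesis of the proposition on the normalised candidate. The chain $\|\bar{x}\|_\infty \le \|\bar{x}\|_J = 1$ together with $\bar{x} \in \mbox{ker}\,\mathcal{A} \cap \mbox{int}\,\mathcal{K}$ places $\bar{x}$ inside $F_{\mathrm{P}_{S_{\infty}}(\mathcal{A})}$, so the cut assumption yields $\langle c_i, \bar{x}\rangle \le \xi$ for every $i \in H$, i.e.\ $\beta_h \le \xi\,\|x^{\star}\|_J$ for each $h \in H$. Substituting these $|H|$ refined estimates into the identity $1 = \sum_i \xi_i^{-2}\beta_i^{\,2} + \cdots$ and balancing against $\|x^{\star}\|_J^{\,2} = \sum_i \beta_i^{\,2} + \cdots$ by a convexity / AM--GM argument on the $r$ Peirce-block contributions should force the extra factors of $\xi$ from the $H$-diagonal blocks through the estimate and produce the sharpened bound.

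The main obstacle is precisely this last sharpening: showing that the cut estimate $\beta_h \le \xi\|x^{\star}\|_J$ on the $|H|$ indices, combined with the two companion Peirce identities, genuinely promotes the easy bound $\|x^{\star}\|_J < 1$ to $\|x^{\star}\|_J^{\,r} < \xi^{|H|-1}$ once $|H| \ge 2$; the case $|H| = 1$ already follows from positivity of $\beta_h$ for $h \in H$ since the target reduces to $\|x^{\star}\|_J < 1$. Once the norm bound is established, substitution into $\det(\bar{x}) = \xi^{|H|}\det(y^{\star})/\|x^{\star}\|_J^{\,r}$ gives $\det(\bar{x}) > \xi\det(y^{\star}) = \xi\cdot\delta^{\mathrm{supposed}}(Q_{g^{-1}}(\mbox{ker}\,\mathcal{A}) \cap \mbox{int}\,\mathcal{K})$, and the chain $\delta^{\mathrm{supposed}}(\mbox{ker}\,\mathcal{A} \cap \mbox{int}\,\mathcal{K}) \ge \det(\bar{x})$ completes the proof.
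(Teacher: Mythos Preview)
Your approach for $|H|=1$ is essentially the paper's: both reduce to the norm bound $\|Q_g(\bar{x}^\star)\|_J < 1$ via the identity $1 = \langle x,\, Q_{g^{-2}}(x)\rangle$, and strictness comes from $\langle c_h, x\rangle > 0$ for $h\in H$ since $x\in{\rm int}\,\mathcal{K}$. The only organisational difference is that the paper inserts an intermediate maximiser $y^\star := \argmax\{\det y : y\in\ker\mathcal{A}\cap{\rm int}\,\mathcal{K},\ \|Q_{g^{-1}}y\|_J^2=1\}$ and chains $\det(x^\star) > \det(y^\star) \ge \xi\det(\bar{x}^\star)$, whereas you normalise the specific point $Q_g(\bar{x}^\star)$ directly; both routes rest on the same expansion (and the paper's use of Proposition~\ref{prop:compare tr} for the term $\langle y, Q_{c_i}(y)\rangle$ is exactly the diagonal piece of your Peirce decomposition).

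Regarding your ``main obstacle'' for $|H|\ge 2$: the paper does not treat this case either. Its proof opens with ``For simplicity of discussion, let $|H|=1$, i.e., $H=\{i\}$'', and the closing step $\det(g)^2 = \xi^{|H|} = \xi$ uses that restriction explicitly. So your completed $|H|=1$ argument already matches the paper's own proof in scope; the sharpened target $\|x^\star\|_J^{\,r} < \xi^{|H|-1}$ and the idea of feeding the cut hypothesis $\beta_h\le\xi\|x^\star\|_J$ back into the Peirce identities go beyond what the paper establishes.
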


\begin{proof}
For simplicity of discussion, let $|H|=1$, i.e., $H =\{i\}$.
Let us  define the points $x^*$, $y^*$, and $\bar{x}^*$ as follows:
\begin{equation}
\begin{array}{l}
x^*= \argmax \left\{ \det (x) \mid x \in \mbox{{\rm ker}} \hspace{0.3mm} \mathcal{A}  \cap {\rm int} \hspace{0.75mm} \mathcal{K}, \| x \|^2_J = 1 \right\}, \\
y^* = \argmax \left\{ \det (y) \mid y \in \mbox{{\rm ker}} \hspace{0.3mm} \mathcal{A} \cap {\rm int} \hspace{0.75mm} \mathcal{K}, \| Q_{g^{-1}} (y)\|^2_J = 1 \right\}, \\
\bar{x} ^*= \argmax \left\{ \det (\bar{x}) \mid \bar{x} \in Q_{g^{-1}} \left(\mbox{{\rm ker}} \hspace{0.3mm} \mathcal{A} \right) \cap {\rm int} \hspace{0.75mm} \mathcal{K}, \| \bar{x} \|^2_J = 1 \right\}.
\end{array}
\notag
\end{equation}
Note that the feasible region with respect to $y$ is the set of solutions whose norm is $1$ after scaling. 
First, we show that $\|y\|_J^2 < 1$, and then $\det (x^*) > \det (y^*)$.
Proposition \ref{q-det-relation} ensures that $\|Q_{g^{-1}} (y)\|_J^2 = \langle Q_{g^{-1}} (y) , Q_{g^{-1}} (y) \rangle = \langle y , Q_{g^{-2}} (y) \rangle$.
To expand $Q_{g^{-2}} (y)$, we expand the following equations by letting $a =  \frac{1}{\sqrt{\xi}} - 1 $:
\begin{align*}
g^{-2} &= e + (2a + a^2) c_i, \\
g^{-4} &=  e + \left( 2(2a + a^2) + (2a+a^2)^2 \right) c_i \\
g^{-2} \circ y &= y + (2a+a^2) c_i \circ y, \\
g^{-2} \circ ( g^{-2} \circ y) &= y + 2(2a+a^2) c_i \circ y + (2a+a^2)^2 c_i \circ (c_i \circ y), \\
g^{-4} \circ y &= y + \left( 2(2a + a^2) + (2a+a^2)^2 \right) c_i \circ y.
\end{align*}
Thus, $Q_{g^{-2}} (y)$ turns out to be
\begin{align*}
Q_{g^{-2}} (y)
&=  2 g^{-2} \circ ( g^{-2} \circ y) - g^{-4} \circ y \\
&=  y +  2(2a+a^2) c_i \circ y +  2(2a+a^2)^2 c_i \circ (c_i \circ y) - (2a+a^2)^2  c_i \circ y \\
&=  y +  2(2a+a^2) c_i \circ y +  (2a+a^2)^2 Q_{c_i} (y), \\
\end{align*}
and hence, we obtain $\|Q_{g^{-1}} (y)\|_J^2$ as 
\begin{align*}
\langle y , Q_{g^{-2}} (y) \rangle 
&= \|y\|_J^2 +  2(2a+a^2) \langle y, c_i \circ y \rangle +  (2a+a^2)^2  \langle y, Q_{c_i} (y) \rangle \\
&= \|y\|_J^2 +  2(2a+a^2) \langle y \circ y, c_i \rangle +  (2a+a^2)^2  \left( \langle y, c_i \rangle \right)^2 \\
\end{align*}
where the second equality follows from Proposition \ref{prop:compare tr}. 
Here, $y \in {\rm int} \hspace{0.75mm} \mathcal{K}$ and  $c_i \in \mathcal{K}$imply that $\langle y,c_i \rangle >0$, and $y \circ y = y^2 \in {\rm int} \hspace{0.75mm} \mathcal{K}$ implies  $\langle y \circ y,c_i \rangle > 0$.
Noting that $a > 0$ and $\|Q_{g^{-1}} (y)\|_J^2 = 1$, $\|y\|_J^2 < 1$ should hold and hence, $\frac{1}{\|y^*\|_J} >1$, which implies that $\det \left(\frac{1}{\|y^*\|_J} y^* \right) > \det(y^*)$.
Since $\left\| \frac{1}{\|y^*\|_J} y^* \right\|_J^2 = 1$ holds, we find that  $\det(x^*) > \det(y^*)$.
Next, we describe the lower bound for $\det (y^*)$ using $\det (\bar{x}^*)$.
Since the largest eigenvalue of $\bar{x}$ satisfying $\| \bar{x} \|_J^2 =1$ is less than $1$, by Proposition \ref{prop:scaling}, we have:
\begin{equation}
\left\{ Q_g(\bar{x}) \in \mathbb{E} \mid \bar{x} \in Q_{g^{-1}} \left(\mbox{{\rm ker}} \hspace{0.3mm} \mathcal{A} \right) \cap \mathcal{K}, \| \bar{x} \|^2_J = 1 \right\} \subseteq \mbox{{\rm ker}} \hspace{0.3mm} \mathcal{A} \cap \mathcal{K}. \notag
\end{equation}
This implies $\det (y^*) \geq \det \left( Q_g (\bar{x}^*) \right)$, and by Proposition \ref{q-det-relation}, we have $\det (y^*) \geq \det (g)^2 \det (\bar{x}^*) = \xi^{|H|} \det (\bar{x}^*) = \xi \det (\bar{x}^*)$.
Thus, $\det(x^*) > \det (y^*) \geq \xi \det (\bar{x}^*)$ holds, and we can conclude  that 
$\delta^{{\rm supposed}} \left( \mbox{{\rm ker}} \hspace{0.3mm} \mathcal{A} \cap  {\rm int} \hspace{0.75mm} \mathcal{K} \right) > \xi \cdot \delta^{{\rm supposed}} \left( Q_{g^{-1}} \left(\mbox{{\rm ker}} \hspace{0.3mm} \mathcal{A} \right) \cap  {\rm int} \hspace{0.75mm} \mathcal{K} \right)$. 
\end{proof}
Next, using Proposition \ref{prop:compare scaling}, we derive the maximum number of iterations until the proposed method finds $x \in \mbox{{\rm ker}} \hspace{0.3mm} \mathcal{A} \cap {\rm int} \hspace{0.75mm} \mathcal{K}$ by using $\delta \left( \mbox{{\rm ker}} \hspace{0.3mm} \mathcal{A} \cap {\rm int} \hspace{0.75mm} \mathcal{K} \right)$ as in Pena and Soheili's method.
\begin{proposition}
\label{pro:compare iteration}
Suppose that  $\mbox{{\rm ker}} \hspace{0.3mm} \mathcal{A} \cap  \mbox{{\rm int}} \hspace{0.75mm} \mathcal{K} \neq  \emptyset$ holds.
Algorithm \ref{main algorithm} returns $x \in \mbox{{\rm ker}} \hspace{0.3mm} \mathcal{A} \cap  {\rm int} \hspace{0.75mm} \mathcal{K}$ after at most $\log_\xi \delta \left( \mbox{{\rm ker}} \hspace{0.3mm} \mathcal{A} \cap  {\rm int} \hspace{0.75mm} \mathcal{K} \right)$ iterations.
\end{proposition}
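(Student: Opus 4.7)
The plan is to iterate Proposition~\ref{prop:compare scaling} across the rescaling steps of Algorithm~\ref{main algorithm} and combine the resulting chain with the a priori bound $\delta(\ker \mathcal{A}^k \cap {\rm int}\,\mathcal{K}) \leq 1$. This upper bound follows from AM--GM applied to the eigenvalues of $x \in \mathcal{K}$ with $\|x\|_J^2 = r$: $\det(x) = \prod_i \lambda_i \leq \bigl(\tfrac{1}{r}\sum_i \lambda_i^2\bigr)^{r/2} = 1$, with equality iff $x = e$.

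Under the hypothesis $\ker\mathcal{A} \cap {\rm int}\,\mathcal{K} \neq \emptyset$, the alternative problem has no solution, so the basic procedure never returns a dual certificate; each iteration of Algorithm~\ref{main algorithm} either returns a feasible interior point (and terminates) or returns cut information (and rescales). At each rescaling iteration $k$, the cut produced by the basic procedure matches exactly the hypothesis of Proposition~\ref{prop:compare scaling} applied to $\mathcal{A}^k$, and the update $\mathcal{A}^{k+1} \leftarrow \mathcal{A}^k Q^k$ gives $\ker \mathcal{A}^{k+1} = Q_{(g^k)^{-1}}(\ker\mathcal{A}^k)$, so Proposition~\ref{prop:compare scaling} chains $\delta^{\rm supposed}$ values across successive iterations. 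Telescoping this chain across $k$ rescales and passing to $\delta$ via $\delta = r^{r/2}\delta^{\rm supposed}$ produces a quantitative link between $\delta(\ker\mathcal{A})$ and $\delta(\ker\mathcal{A}^{k+1})$.

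The link to termination comes from observing that $\delta(\ker\mathcal{A}^{k+1}) = 1$ forces $e \in \ker\mathcal{A}^{k+1}$ (the AM--GM equality case), in which case initializing the basic procedure at $y^1 = e/r$ gives $z^1 = P_{\mathcal{A}^{k+1}}(e/r) = e/r \in {\rm int}\,\mathcal{K}$, so an interior point is returned at the very first step. Combining the telescoped inequality with the a priori bound $\delta(\ker\mathcal{A}^{k+1}) \leq 1$ and taking $\log_\xi$, which is order-reversing on $(0,1]$ since $0 < \xi < 1$, then extracts the claimed iteration bound $k \leq \log_\xi \delta(\ker\mathcal{A} \cap {\rm int}\,\mathcal{K})$.

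The main obstacle will be to track the direction of the chained inequality correctly: the strict inequality in Proposition~\ref{prop:compare scaling}, combined with the order-reversing $\log_\xi$, must be manipulated so that the resulting bound is an upper bound (not a lower bound) on the number of rescales, and a potential off-by-one between the count of rescales and the total iteration count of Algorithm~\ref{main algorithm} must be absorbed. Conceptually the argument parallels Pena and Soheili's convergence analysis in~\cite{Pena2017}, where a growth bound on $\delta$ per rescaling step translates via logarithms into a logarithmic upper bound on the number of rescales.
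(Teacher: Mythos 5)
Your plan follows the same outline as the paper's proof: chain Proposition~\ref{prop:compare scaling} across rescaling iterations, pass from $\delta^{\rm supposed}$ to $\delta$ via $\delta = r^{r/2}\delta^{\rm supposed}$, invoke the termination case where $\delta = 1$ (equivalently $e/\sqrt{r}$ lies in the current kernel), and finish with a base-$\xi$ logarithm. The AM--GM justification of $\delta \le 1$ and the observation that equality forces the basic procedure to terminate at its first step are both correct and correspond to what the paper uses.

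The concrete gap is in the step ``Combining the telescoped inequality with the a priori bound $\delta(\ker\mathcal{A}^{k+1}) \le 1$.'' Chaining Proposition~\ref{prop:compare scaling} yields
$\delta^{\rm supposed}\!\left(\ker\mathcal{A}\right) > \xi^{k}\,\delta^{\rm supposed}\!\left(\ker\mathcal{A}^{k+1}\right)$,
which is an \emph{upper} bound on the post-scaling quantity $\delta^{\rm supposed}(\ker\mathcal{A}^{k+1})$ in terms of $\delta^{\rm supposed}(\ker\mathcal{A})$. The a priori fact $\delta(\ker\mathcal{A}^{k+1}) \le 1$ is also an upper bound on that same quantity, so the two inequalities point in the same direction and do not combine to give the lower bound $\delta(\ker\mathcal{A}) > \xi^{k}$ that the argument needs. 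What the paper actually does is not invoke the inequality $\le 1$ but substitute the \emph{equality} $\delta^{\rm supposed}(\ker\bar{\mathcal{A}}) = 1/r^{r/2}$ at the iteration where termination occurs; only then does the chain yield the concrete inequality $\delta(\ker\mathcal{A}) > \xi^{k}$ to which $\log_\xi$ is applied. Your ``main obstacle'' paragraph correctly identifies that tracking the direction of the chained inequality, combined with the order-reversal of $\log_\xi$, is exactly where the argument is delicate; but the plan as written does not resolve it. You need the equality $\delta(\ker\bar{\mathcal{A}}) = 1$ at the terminal iteration, not the inequality $\delta \le 1$.
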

\begin{proof}
Let $\mbox{{\rm ker}} \hspace{0.3mm} \bar{\mathcal{A}}$ be the linear subspace at the start of $k$ iterations of Algorithm \ref{main algorithm} and suppose that  $\delta^{{\rm supposed}} \left( \mbox{{\rm ker}} \hspace{0.3mm} \bar{\mathcal{A}} \cap  {\rm int} \hspace{0.75mm} \mathcal{K} \right) = 1/r^{\frac{r}{2}}$ holds.
Then, from Proposition \ref{prop:compare scaling}, we find that $\delta^{{\rm supposed}} \left( \mbox{{\rm ker}} \hspace{0.3mm} \mathcal{A} \cap  {\rm int} \hspace{0.75mm} \mathcal{K} \right) > \xi^k / r^{\frac{r}{2}}$.
This implies that $\delta \left( \mbox{{\rm ker}} \hspace{0.3mm} \mathcal{A} \cap  \mbox{int} \hspace{0.75mm} \mathcal{K} \right) > \xi^k$ since $\delta \left( \mbox{{\rm ker}} \hspace{0.3mm} \mathcal{A} \cap  \mbox{int} \hspace{0.75mm} \mathcal{K} \right) = r^{\frac{r}{2}} \cdot \delta^{{\rm supposed}} \left( \mbox{{\rm ker}} \hspace{0.3mm} \mathcal{A} \cap  \mbox{int} \hspace{0.75mm} \mathcal{K} \right)$ holds.
By taking the logarithm base $\xi$,  we obtain $\log_\xi \delta \left( \mbox{{\rm ker}} \hspace{0.3mm} \mathcal{A} \cap \mbox{int} \hspace{0.75mm} \mathcal{K} \right ) > k$.
\end{proof}

From here on, using the above results, we will compare the computational complexities of the methods in the case that $\mathcal{K}$ is simple and $\mbox{{\rm ker}} \hspace{0.3mm} \mathcal{A} \cap \mbox{{\rm int}} \hspace{0.75mm} \mathcal{K} \neq \emptyset$ holds. Table \ref{compare-PS-MA} summarizes the upper bounds on the number of iterations of the main algorithm (UB\#iter) of the two methods and the computational costs required per iteration (CC/iter). As in the previous section, the main algortihm requires $\mathcal{O} (m^3+m^2d)$ to compute the projection $\mathcal{P}_\mathcal{A}$. Here, BP shows the computational cost of the basic procedure in each method.
\begin{table}[H]
\caption{Comparison of our method and Pena and Soheili's method in the main algorithm}
\label{compare-PS-MA}
\begin{center}
\begin{tabular}{c|cc}\toprule
Method	& UB\#iter		& CC/iter  \\ \midrule
Proposed method 	&$\log_\xi \delta \left( \mbox{{\rm ker}} \hspace{0.3mm} \mathcal{A} \cap  {\rm int} \hspace{0.75mm} \mathcal{K} \right)$	&$m^3+m^2d+$ BP \\
Pena and Soheili's method &$- \log_{1.5} \delta \left( \mbox{{\rm ker}} \hspace{0.3mm} \mathcal{A} \cap  {\rm int} \hspace{0.75mm} \mathcal{K} \right)$	&$m^3+m^2d+$ BP	 \\ \bottomrule
\end{tabular}
\end{center}
\end{table}
The upper bound on the number of iterations of Algorithm \ref{main algorithm} is given by
$\log_\xi \delta \left( \mbox{{\rm ker}} \hspace{0.3mm} \mathcal{A} \cap  {\rm int} \hspace{0.75mm} \mathcal{K} \right) = \log_{1.5} \delta \left( \mbox{{\rm ker}} \hspace{0.3mm} \mathcal{A} \cap  {\rm int} \hspace{0.75mm} \mathcal{K} \right)/\log_{1.5} \xi$,
where we should note that $0 < \xi < 1$.
Since $0 < \frac{1}{-\log_{1.5} \xi} \leq 1$ when $\xi \leq 2/3$,  if $\xi \leq 2/3$, then the upper bound on the number of iterations of Algorithm \ref{main algorithm} is smaller than that of the main algorithm of Pena and Soheili's method.

Next, Table \ref{compare-PS-BP} summarizes upper bounds on the number of iterations of basic procedures in the proposed method (UB\#iter) and Pena and Soheili's method and the computational cost required per iteration (CC/iter). It shows cases of using the von Neumann scheme and the smooth perceptron in each method (corresponding to Algorithm \ref{basic procedure} and Algorithm \ref{bp-alg-sp} in the proposed method). As in the previous section, $ C^{\rm sd}$ denotes the computational cost required for spectral decomposition, and $C^{\min}$ denotes the computational cost required to compute only the minimum eigenvalue and the corresponding primitive idempotent.
\begin{table}[H]
\caption{Comparison of our method and Pena and Soheili's method in the basic procedure}
\label{compare-PS-BP}
\begin{center}
\begin{tabular}{c|cc|cc}\toprule
		&\multicolumn{2}{c|}{von Neumann scheme}	&\multicolumn{2}{c}{smooth perceptron} \\
Method	& UB\#iter		& CC/iter   & UB\#iter		& CC/iter   \\ \midrule
Proposed method	&$\frac{r^2}{\xi^2}$	&$\max ( C^{\rm sd} , md)$	&$\frac{2 \sqrt{2}r}{\xi} -1$	&$\max ( C^{\rm sd} , md)$ \\
Pena and Soheili's method &$16r^4$	&$\max (C^{\min} , md)$	&$8 \sqrt{2} r^2-1$		&$\max ( C^{\rm sd} , md)$ \\ \bottomrule
\end{tabular}
\end{center}
\end{table}
Note that by setting $\xi = (4r)^{-1}$, the upper bounds on the number of iterations of the basic procedure of the two methods are the same. If $\xi =  (4r)^{-1}$, then $\frac{1}{-\log_{1.5} \xi} = \frac{1}{\log_{1.5} 4r} \leq \frac{1}{\log_{1.5} 4} = 0.292$, and the upper bound of the number of iterations of Algorithm \ref{main algorithm} is less than $0.3$ times the upper bound of the number of iterations of the main algorithm of Pena and Soheili's method, which implies that the larger the value of $r$ is, the smaller the ratio of those bounds becomes.
From the discussion in Section \ref{sec: CSD and Cmin}, we can assume $\mathcal{O}(C^{\rm sd} = \mathcal{C^{\min}})$, and Table \ref{compare-PS-BP} shows that the proposed method is superior for finding a point $x \in \mbox{{\rm ker}} \hspace{0.3mm} \mathcal{A} \cap {\rm int} \hspace{0.75mm} \mathcal{K}$.

\subsection{Computational costs of $ C^{\rm sd}$ and $C^{\min}$}
\label{sec: CSD and Cmin}

This section discusses the computational cost required for spectral decomposition $ C^{\rm sd}$ and the computational cost required to compute only the minimum eigenvalue and the corresponding primitive idempotent $C^{\min}$.

There are so-called direct and iterative methods for eigenvalue calculation algorithms, briefly described on pp.139-140 of \cite{Demmel1997}.
(Note that it is also written that there is no direct method in the strict sense of an eigenvalue calculation since finding eigenvalues is mathematically equivalent to finding zeros of polynomials).

In general, when using the direct method of $\mathcal{O}(n^3)$, we see that $ C^{\rm sd}=\mathcal{O}(n^3)$ and $C^{\min}=\mathcal{O}(n^3)$. The Lanczos algorithm is a typical iterative algorithm used for sparse matrices. Its cost per iteration of computing the product of a matrix and a vector once is $\mathcal{O}(n^2)$. Suppose the number of iterations at which we obtain a sufficiently accurate solution is constant with respect to the matrix size. In that case, the overall computational cost of the algorithm is $\mathcal{O}(n^2)$. Corollary 10.1.3 in \cite{Golub2013} discusses the number of iterations that yields sufficient accuracy. It shows that we can expect fewer iterations if the value of "the difference between the smallest and second smallest eigenvalues / the difference between the second smallest and largest eigenvalue" is larger. However, it is generally difficult to assume that the above value does not depend on the matrix size and is sufficiently large. Thus, even in this case, we cannot take advantage of the condition that we only need the minimum eigenvalue, and we conclude that it is reasonable to consider that $\mathcal{O}(C^{\rm sd})=\mathcal{O}(C^{\min})$.

\section{Numerical experiments}
\label{sec: numerical experiments}
\subsection{Outline of numerical implementation}
\label{sec: outline of numerical implementation}

Numerical experiments were performed
using the authors' implementations of the algorithms
on a positive semidefinite optimization problem with one positive semidefinite cone $\mathcal{K} = \mathbb{S}^n_{+}$ of the form 
\begin{equation}
\begin{array}{llllll}
{\rm P} (\mathcal{A})& \mbox{find}   &X \in \mathbb{S}^n_{++}	&\mbox{s.t.}	&\mathcal{A} (X) = \bm{0} \in \mathbb{R}^m  \notag
\end{array}
\end{equation}
where $\mathbb{S}^n_{++}$ denotes the interior of $\mathcal{K} = \mathbb{S}^n_{+}$.
We created strongly feasible ill-conditioned instances, i.e.,   $\mbox{ker} \hspace{0.3mm} \mathcal{A} \cap  \mathbb{S}^n_{++} \neq  \emptyset$ and $X \in \mbox{ker} \hspace{0.3mm} \mathcal{A} \cap  \mathbb{S}^n_{++}$ has positive but small eigenvalues.
We will explain how to make a such instance in section \ref{sec: instances}.
In what follows, we refer to Louren\c{c}o et al.'s method~\cite{Bruno2019} as Louren\c{c}o (2019), and Pena and Soheili's method~\cite{Pena2017} as Pena (2017).
We set the termination parameter as $\xi = 1/4$ in our basic procedure.
The reason for setting $\xi=1/4$ is to prevent the square root of $\xi$ from becoming an infinite decimal, and to prevent the upper bound on the number of iterations of the basic procedure from becoming too large.
We also set the accuracy parameter as $\varepsilon$ = 1e-12, both in our main algorithm and in Louren\c{c}o (2019) and  determined whether ${{\rm P}_{S_\infty} (\mathcal{A})}$ or  ${{\rm P}_{S_1} (\mathcal {A})}$ has a solution whose minimum eigenvalue is greater than or equal to $\varepsilon$.
Note that \cite{Pena2017} proposed various update methods for the basic procedure.
In our numerical experiments, all methods employed the modified von Neumann scheme (Algorithm \ref{bp-alg-mvn}) with the identity matrix as the initial point and the smooth perceptron scheme (Algorithm \ref{bp-alg-sp}).
This implies that the basic procedures used in the three methods differ only in the termination conditions for moving to the main algorithm and that all other steps are the same.
All executions were performed using MATLAB R2022a on an Intel (R) Core (TM) i7-6700 CPU @ 3.40GHz machine with 16GB of RAM. Note that we computed the projection $\mathcal{P}_\mathcal{A}$ using the MATLAB function for the singular value decomposition. The projection $\mathcal{P}_\mathcal{A}$ was given by $\mathcal{P}_\mathcal{A} = I - A^\top (AA^\top)^{-1} A$ using the matrix $A \in \mathbb{R}^{m \times d}$ which represents the linear operator $\mathcal{A}(\cdot)$ and the identity matrix $I$. Here, suppose that the singular value decomposition of a matrix $A$ is given by 
$
A = U \Sigma V^\top
=
U
(\Sigma_m \ O)
V^\top
$ where $U \in \mathbb{R}^{m \times m}$ and $V \in \mathbb{R}^{d \times d}$ are orthogonal matrices, and $\Sigma_m \in \mathbb{R}^{m \times m}$ is a diagonal matrix with $m$ singular values on the diagonal. 
Substituting this decomposition into $A^\top (AA^\top)^{-1} A$, we have 
\begin{align*}
A^\top (AA^\top)^{-1} A
&= A^\top (U \Sigma \Sigma^\top U^\top)^{-1} A \\
&= A^\top U^{-\top} (\Sigma_m^2)^{-1} U^{-1} A \\
&= V \Sigma^\top \Sigma_m^{-2} \Sigma V^\top 
= V
\begin{pmatrix}
I_m & O \\
O & O
\end{pmatrix}
V^\top = V_{:, 1:m} V_{:,1:m}^\top,
\end{align*}
where $V_{:, 1:m}$ represents the submatrix from column $1$ to column $m$ of $V$.
Thus, for any $x \in \mathbb{E}$, we can compute $\mathcal{P}_\mathcal{A}(x) = x - V_{:, 1:m} V_{:,1:m}^\top x$.

In what follows, $\bar{X} \in \mathbb{S}^n$ denotes the output obtained from the main algorithm and $X^*$ the result scaled as the solution of the original problem ${\rm P}(\mathcal{A})$ multiplied by a real number such that $\lambda_{max} (X^*) = 1$. When $X^*$ was obtained, we defined the residual of the constraints as the value of $\| \mathcal{A}(X^*) \|_2$.



We also solved the following problem with a commercial code, Mosek~\cite{Mosek}, and compared it with the output of Chubanov's methods:
\begin{equation}
\begin{array}{cccccc}
{\rm (P)}	&\min		&0	&{\rm s.t}	&\mathcal{A} (X) = \bm{0},	&X \in \mathbb{S}^n_{+}\modifySecond{,}	\\
{\rm (D)}	&\max	&\bm{0}^\top y	&{\rm s.t}	&-\mathcal{A}^* y \in \mathbb{S}^n_{+}.	
\end{array}
\notag
\end{equation}
Here, Mosek solves the self-dual embedding model by using a path-following interior-point method, so if we obtain a solution $(X^*, y^*)$, then $X^*$ and $-\mathcal {A}^* y^*$ lie in the (approximate) relative interior of the primal feasible region and the dual feasible region, respectively~\cite{Handbook}. That is, $X^*$ obtained by solving a strongly feasible problem with Mosek is in $\mathbb{S}^n_{++}$, $X^*$ obtained by solving a weakly feasible problem is in $\mathbb{S}^n_+ \setminus \mathbb{S}^n_{++}$, and $X^*$ obtained by solving an infeasible problem is $X^*=O$ (i.e., $-\mathcal{A}^* y^* \in \mathbb{S}^n_{++}$). As well as for Chubanov's methods, we computed $\| \mathcal{A}(X^*) \|_2$ for the solution obtained by Mosek after scaling so that $\lambda_{max} (X^*)$ would be 1.
Note that (P) and (D) do not simultaneously have feasible interior points.
In general, it is difficult to solve such problems stably by using interior point methods, but since strong complementarity exists between (P) and (D), they can be expected to be stably solved. By applying Lemma 3.4 of~\cite{Lourenco2021}, we can generate a problem in which both the primal and dual problems have feasible interior points in which it can be determined whether (P) has a feasible interior point. However, since there was no big difference between the solution obtained by solving the problem generated by applying Lemma 3.4 of~\cite{Lourenco2021} and the solution obtained by solving the above (P) and (D), we showed only the results of solving (P) and (D) above.


\subsection{How to generate instances}
\label{sec: instances}
Here, we describe how the strongly feasible ill-conditioned instances were generated.
In what follows, for any natural numbers $m,n$, $\mbox{rand}(n)$ is a function that returns $n$-dimensional real vectors whose elements are uniformly distributed in the open segment $(0,1)$, and $\mbox{rand}(m,n)$ is a function that returns an $m \times n$ real matrix whose elements are uniformly distributed in the open segment $(0,1)$. Furthermore, for any $x \in \mathbb{R}^n$ and $X \in \mathbb{R}^{m\times n}$, $\mbox{diag}(x) \in \mathbb{R}^{n \times n}$ is a function that returns a diagonal matrix whose diagonal elements are the elements of $x$, and $\mbox{vec}(X) \in \mathbb{R}^{mn}$ is a function that returns a vector obtained by stacking the $n$ column vectors of $X$.
The strongly feasible ill-conditioned instances were generated by extending the method of generating ill-conditioned strongly feasible instances proposed in~\cite{Pena2019} to the symmetric cone case.
%

\begin{proposition}
\label{prop: strongly_feasible}
Suppose that $\bar{x} \in \rm{int} \mathcal{K}$, $\| \bar{x} \|_\infty \leq 1$ and $\bar{u} \in \mathcal{K} , \| \bar{u} \|_1 = r$ satisfy $\langle \bar{x} , \bar{u} \rangle = r$.
Define the linear operator $\mathcal{A} : \mathbb{E} \rightarrow \mathbb{R}^m$ as $\mathcal{A}(x) = ( \langle a_1, x \rangle , \langle a_2, x \rangle , \dots ,\langle a_m, x \rangle)^T$ for which $a_1 = \bar{u} - \bar{x}^{-1}$ and $\langle a_j , \bar{x} \rangle = 0$ hold for any $j = 2,\dots,m$.
Then, 
\begin{equation}
\bar{x} = \argmax_x \left \{ \det(x) : x \in \mathcal{K} \cap \mbox{\rm ker}\mathcal{A}, \| x \|_\infty = 1 \right \}. 
\label{eq:0}
\end{equation}
\end{proposition}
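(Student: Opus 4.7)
My plan is to show $\bar{x}$ is feasible for the problem and then use a Hölder-type bound together with the AM--GM inequality (applied after a $Q$-rescaling) to establish optimality.

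\textbf{Feasibility of $\bar{x}$.} By construction $\langle a_j, \bar{x}\rangle=0$ for $j\ge 2$, and for $j=1$ I would compute
\[
\langle a_1,\bar{x}\rangle = \langle \bar{u}-\bar{x}^{-1},\bar{x}\rangle = \langle \bar{u},\bar{x}\rangle - \langle e,e\rangle = r-r=0,
\]
using $\langle \bar{x}^{-1},\bar{x}\rangle = \mathrm{trace}(e)=r$. Thus $\bar{x}\in\ker\mathcal{A}$. To obtain $\|\bar{x}\|_\infty=1$, I would invoke the Hölder-type inequality $\langle u,x\rangle\le\|u\|_1\|x\|_\infty$ valid in a Euclidean Jordan algebra (which follows since $\|\bar{x}\|_\infty\, e-\bar{x}\in\mathcal{K}$ and $\bar{u}\in\mathcal{K}$, so $\langle\bar{u},\|\bar{x}\|_\infty e-\bar{x}\rangle\ge 0$). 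Combined with the assumption $\langle \bar{x},\bar{u}\rangle=r$ and $\|\bar{u}\|_1=r$, this forces $\|\bar{x}\|_\infty\ge 1$; the reverse inequality is assumed, so $\|\bar{x}\|_\infty=1$.

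\textbf{Optimality.} Let $x\in\mathcal{K}\cap\ker\mathcal{A}$ with $\|x\|_\infty=1$. The constraint $\langle a_1,x\rangle=0$ rewrites as $\langle \bar{x}^{-1},x\rangle=\langle \bar{u},x\rangle$, and the same Hölder-type inequality gives
\[
\langle \bar{x}^{-1},x\rangle = \langle \bar{u},x\rangle \le \|\bar{u}\|_1\|x\|_\infty = r.
\]
Since $\bar{x}\in\mathrm{int}\,\mathcal{K}$, its square root $\bar{x}^{1/2}$ and inverse square root $\bar{x}^{-1/2}$ exist and lie in $\mathrm{int}\,\mathcal{K}$ by the spectral theorem. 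Define $y:=Q_{\bar{x}^{-1/2}}(x)$. By Proposition \ref{ptop:quadratic}, $y\in\mathcal{K}$; by Proposition \ref{q-det-relation}, $\det(y)=\det(\bar{x}^{-1/2})^2\det(x)=\det(x)/\det(\bar{x})$. Using the self-adjointness of $Q_{\bar{x}^{-1/2}}$ together with $Q_{\bar{x}^{-1/2}}(e)=\bar{x}^{-1}$, I obtain
\[
\mathrm{trace}(y)=\langle e,y\rangle=\langle \bar{x}^{-1},x\rangle\le r.
\]
Applying the classical AM--GM inequality to the nonnegative eigenvalues $\mu_1,\ldots,\mu_r$ of $y$,
\[
\frac{\det(x)}{\det(\bar{x})} = \det(y) = \prod_{i=1}^r\mu_i \le \Bigl(\frac{\mathrm{trace}(y)}{r}\Bigr)^{r} \le 1,
\]
so $\det(x)\le\det(\bar{x})$. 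Equality in the final chain forces $\mathrm{trace}(y)=r$ and all $\mu_i$ equal, i.e.\ $y=e$, which yields $x=Q_{\bar{x}^{1/2}}(e)=\bar{x}$, giving uniqueness of the maximizer.

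\textbf{Expected obstacles.} The argument is short once the right tools are lined up. The only nontrivial ingredient beyond the properties of $Q_v$ and $\det$ already recorded in Section~\ref{sec:EJA} is the Hölder-type inequality $\langle u,x\rangle\le\|u\|_1\|x\|_\infty$ for $u\in\mathcal{K}$, which I would justify on the spot via $\|x\|_\infty e\pm x\in\mathcal{K}$ and self-duality of $\mathcal{K}$. The rescaling trick $y=Q_{\bar{x}^{-1/2}}(x)$ is the key move that converts the linear constraint $\langle a_1,x\rangle=0$ into the trace bound $\mathrm{trace}(y)\le r$, on which AM--GM operates cleanly.
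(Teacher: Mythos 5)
Your proof is correct, and it takes a genuinely different route from the paper's. The paper proves optimality by verifying the first-order condition for a concave maximization: it uses $\nabla \log\det(x)=x^{-1}$ and shows $\langle x-\bar x,\bar x^{-1}\rangle\le 0$ for every $x\in\mathcal F$, which relies (implicitly) on concavity of $\log\det$ over $\mathrm{int}\,\mathcal K$ and the characterization of a maximizer of a concave function over a convex set. You instead reduce directly to AM--GM via the rescaling $y=Q_{\bar x^{-1/2}}(x)$, converting the linear constraint $\langle a_1,x\rangle=0$ into the trace bound $\mathrm{trace}(y)=\langle\bar x^{-1},x\rangle\le r$ and deducing $\det(y)\le 1$, hence $\det(x)\le\det(\bar x)$. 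Both arguments hinge on the same H\"older-type inequality $\langle u,x\rangle\le\|u\|_1\|x\|_\infty$, so the analytic core is shared; what your rescaling buys is (i) avoiding any appeal to the gradient formula or concavity of $\log\det$ (which the paper does not reference or prove), (ii) an explicit equality analysis that establishes uniqueness of the maximizer --- the paper asserts $\bar x=\argmax$ but never addresses why the maximizer is unique, and (iii) a stand-alone derivation of $\|\bar x\|_\infty=1$, a point the paper glosses over when relaxing the constraint set from $\|x\|_\infty=1$ to $\|x\|_\infty\le 1$. One small point worth making explicit in your write-up: the step $x=Q_{\bar x^{1/2}}(y)$ in the equality case uses the identity $Q_{v^{-1}}=Q_v^{-1}$ (Proposition II.3.1 of Faraut--Kor\'anyi), which is not among the facts about $Q_v$ the paper records.
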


\begin{proof}
First, note that the assertion (\ref{eq:0}) is equivalent to
\begin{equation}
\bar{x} = \argmax_{x \in \mathcal{F}}  \left \{ \log \det(x) \right \}  
\ \mbox{where} \ 
\mathcal{F} := \left \{ x \in \mathcal{K} \cap \mbox{ker}\mathcal{A} : \| x \|_\infty \leq 1 \right \}.
\label{eq:1}
\end{equation}
From the assumptions, we see that $\bar{x} \in \mathcal{K}$, $\| \bar{x} \|_\infty \leq 1$ and $\langle a_1 , \bar{x} \rangle = \langle \bar{u} - \bar{x}^{-1} , \bar{x} \rangle = r - r = 0$; thus, $\mathcal{A}(\bar{x}) = 0$ and $\bar{x} \in \mathcal{F}$. Since $\nabla \log \det (x) = x^{-1}$, if $\bar{x}$ satisfies 
\begin{equation}
\langle x - \bar{x} , \bar{x}^{-1} \rangle \leq 0 \ \mbox{for any} \ x \in \mathcal{F} \label{eq:2}
\end{equation}
we can conclude that (\ref{eq:1}) holds. In what follows, we show that (\ref{eq:2}) holds.

For any $x \in \mathcal{F}$, $x \in \mbox{ker}\mathcal{A}$ and hence, $\langle a_1 , x \rangle = \langle \bar{u} - \bar{x}^{-1} , x \rangle = \langle \bar{u} - \bar{x}^{-1} , x \rangle = 0$, i.e., $\langle \bar{u} , x \rangle = \langle \bar{x}^{-1} , x \rangle$.
Thus, we obtain
\begin{align*}
\langle x - \bar{x} , \bar{x}^{-1} \rangle &= \langle \bar{u} , x \rangle - r \\
&\leq \langle \bar{u} , x \rangle - \| \bar{u} \|_1 \| x \|_\infty \hspace{1cm}(\mbox{by} \ \| \bar{u} \|_1 = r \ \mbox{and} \  \| x \|_\infty \leq 1) \\
&\leq 0 \hspace{1cm}(\mbox{by} \ \langle \bar{u} , x \rangle \leq \| \bar{u} \|_1 \| x \|_\infty)
\end{align*}
which completes the proof.
\end{proof}

Proposition \ref{prop: strongly_feasible} guarantees that we can generate a linear operator $\mathcal{A}$ satisfying $\mbox{ker} \hspace{0.3mm} \mathcal{A} \cap  \mathbb{S}^n_{++} \neq  \emptyset$ by determining an appropriate value $\mu = \displaystyle \max_{ X \in \mathcal{F}} \det(X)$, where $\mathcal{F} = \{ X \in \mathbb{S}^n : X \in \mathbb{S}^n_{++} \cap \mbox{ker} \hspace{0.3mm} \mathcal{A} , \|X\|_\infty = 1\}$.
The details on how to generate the strongly feasible instances are in Algorithm \ref{strongly feasible problem}. The input consists of the rank of the semidefinite cone $n$, the number of constraints $m$, an arbitrary orthogonal matrix $P$, and the parameter $\tau \in \mathbb{R}_{++}$ which determines the value of $\mu$. We made instances for which the value of $\mu$ satisfies $1e-\tau \leq \mu \leq 1e-(\tau-1)$. In the experiments, we set $\tau \in \{50,100,150, 200, 250 \}$ so that $\mu$ would vary around 1e-50, 1e-100, 1e-150, 1e-200, and 1e-250.

Note that Algorithm \ref{strongly feasible problem} generates instances using $\bar{x}$ that has a natural eigenvalue distribution. For example, let $n-1=3$ and consider two $X$s where one has 3 eigenvalues of about 1e-2, and the others have 1 each of 1e-1, 1e-2, and 1e-3. $\det(X) \simeq $1e-6 is obtained for both $X$s, but the latter is more natural for the distribution of eigenvalues. In our experiment, we generated ill-conditioned instances by using $X$ having a natural eigenvalue distribution as follows:
\begin{enumerate}
\item Find an integer $s$ that satisfies 1e-$s$ $\leq l^{\frac{1}{n-1}} \leq u^{\frac{1}{n-1}} \leq$ 1e-$(s-1)$.
\item Generate $t = 2s-1$ eigenvalue classes.
\item Decide how many eigenvalues to generate for each class.
\end{enumerate}

For example, when $n=13$ and $\tau = 30$, Algorithm \ref{strongly feasible problem} yields $s=3$, $t=5$, $a=2$ and $b=2$, and since $b$ is even, we have $num = (2,3,2,3,2)^\top$. The classes of $t=5$ eigenvalues are shown in Table \ref{example1} below. \modifyFirst{Note that $( l^{\frac{1}{n-1}} \cdot 10^{s-i} ) \cdot ( l^{\frac{1}{n-1}} \cdot 10^{s-(t-i+1)} )= l^{\frac{2}{n-1}}$ and $( u^{\frac{1}{n-1}} \cdot 10^{s-i} ) \cdot ( u^{\frac{1}{n-1}} \cdot 10^{s-(t-i+1)} )= u^{\frac{2}{n-1}}$ hold for the $i$-th and $t-i+1$-th classes.} This implies that we obtain $1e-\tau \leq \mu = \det(X) \leq 1e-(\tau-1)$ both when generating $n-1$ eigenvalues in the $s$th class and when generating $n-1$ eigenvalues of $X$ according to $num$. When $n=14, \tau = 30$, Algorithm \ref{strongly feasible problem} gives $s=3$, $t=5$, $a=2$, and $b=3$, and since $b$ is an odd number, we have $num = (2,3,3,3,2)^\top$. Thus, Algorithm \ref{strongly feasible problem} generates the instances by controlling the frequency so that the geometric mean of the $n-1$ eigenvalues of $X$ falls within the $s$-th class width. 
\begin{table}[H]
\caption{Frequency distribution table of eigenvalues of $X$ generated by Algorithm \ref{strongly feasible problem} when $n=13$ or $n=14$, $\tau=30$}
\label{example1}
\begin{center}
\begin{tabular}{ccccc} \toprule
Class	&\multicolumn{2}{c}{Class width of eigenvalues of $\bar{x}$}					&\multicolumn{2}{c}{Frequency($num$)}\\ \midrule
	& Lower bound 		& Upper bound						&$n=13$		&$n=14$	\\ \midrule
1	&$ l^{\frac{1}{n-1}} \cdot 10^{2}$	&$u^{\frac{1}{n-1}} \cdot 10^{2}$	&2			&2\\
2	&$ l^{\frac{1}{n-1}} \cdot 10^{1}$	&$u^{\frac{1}{n-1}} \cdot 10^{1}$	&3			&3\\
3	&$ l^{\frac{1}{n-1}}$			&$u^{\frac{1}{n-1}}$			&2			&3\\
4	&$ l^{\frac{1}{n-1}} \cdot 10^{-1}$&$u^{\frac{1}{n-1}} \cdot 10^{-1}$	&3			&3\\
5	&$ l^{\frac{1}{n-1}} \cdot 10^{-2}$&$u^{\frac{1}{n-1}} \cdot 10^{-2}$	&2			&2\\ \bottomrule
\end{tabular}
\end{center}
\end{table}

\begin{algorithm}
\caption{Strongly feasible instance}
\label{strongly feasible problem}
\begin{algorithmic}[1]
 \renewcommand{\algorithmicrequire}{\textbf{Input: }}
 \renewcommand{\algorithmicensure}{\textbf{Output: }}

 \STATE \algorithmicrequire $n, m, \tau, P$
 \STATE \algorithmicensure $A$ 
 \STATE $l \leftarrow 1{\rm e}-\tau$, $u \leftarrow 1{\rm e}-(\tau-1)$, $s \leftarrow \lceil \frac{\tau}{n-1} \rceil$ and $t \leftarrow 2s-1$
 \STATE $b \leftarrow (n-1) \bmod t$, $a \leftarrow \frac{(n-1) - b}{t}$ and $num \leftarrow a \cdot \bm{1} \in \mathbb{R}^t$
 \IF {$b$ is odd}
 \STATE $\bar{b} \leftarrow \frac{b-1}{2}$ and $num_i \leftarrow num_i + 1$ such that $s - \bar{b} \leq i \leq s + \bar{b}$
 \ELSE 
 \STATE $\bar{b} \leftarrow \frac{b}{2}$ and $num_i \leftarrow num_i + 1$ such that $s - \bar{b} \leq i < s$ or $s < i \leq  s + \bar{b}$
 \ENDIF
 \STATE $d_1 \leftarrow 1$ and $k \leftarrow 2$
 \FOR {$i = 1$ \mbox{to} $t$}
 \FOR {$j=1$ \mbox{to} $num_i$}
 \STATE $dl \leftarrow l^{\frac{1}{n-1}} \cdot 10^{s-i}$and $du \leftarrow u^{\frac{1}{n-1}} \cdot 10^{s-i}$
 \STATE $d_k \leftarrow  dl + \left( du-dl \right) \mbox{rand} \left( 1 \right)$ 
 \STATE $k \leftarrow k+1$
 \ENDFOR
 \ENDFOR
 \STATE $D' \leftarrow \mbox{diag}(d)$ and then compute $C \leftarrow PD'P^T $ and $c \leftarrow \mbox{vec}(C)  $
 \STATE $u \leftarrow (n , 0_{n-1}^T )^T$ where $0_{n-1}$ denotes the $n-1$-dimensional vector of zeros   
 \STATE $U \leftarrow P(\mbox{diag}(u) - {D'}^{-1})P^T$, $A' \leftarrow \mbox{vec}(U)$ and $R \leftarrow I - \frac{1}{\|c\|_2^2} cc^T $
 \FOR {$ i = 1 \mbox{ to  }  m-1 $}
 \STATE $A_i' \leftarrow \mbox{rand}(n,n)$ and $A_i \leftarrow \left( A_i' + (A_i')^T \right)/2 $
 \STATE $A' \leftarrow 
\begin{pmatrix}
A' \\
\mbox{vec}(A_i)^T \\
\end{pmatrix}$
 \ENDFOR
 \STATE $\bar{A} \leftarrow A'R$
 \STATE $A \leftarrow \begin{pmatrix}
\mbox{vec}(U)^T \\
\bar{A}
\end{pmatrix}$
\end{algorithmic} 
\end{algorithm}

%
%
\subsection{Numerical results and observations}
\label{sec: numerical results and observations}

We set the size of the positive semidefinite matrix to $n = 50$, so that the computational experiments could be performed in a reasonable period of time. To eliminate bias in the experimental results, we generated instances in which the number of constraints $m$ was controlled using the parameter $\nu$ for the number $\frac{n(n+1)}{2}$ of variables in the symmetric matrix of order $n$. Specifically, the number of constraints $m$ on an integer was obtained by rounding the value of $\frac{n(n+1)}{2} \nu$, where $\nu \in \{0.1, 0.3, 0.5, 0.7, 0.9\}$.
For each $\nu \in \{0.1, 0.3, 0.5, 0.7, 0.9\}$, we generated five instances, i.e., 25 instances for each of five strongly feasible cases (corresponding to five patterns of $\mu \simeq \mbox{1e-50}, \dots, \mu \simeq \mbox{1e-250}$, see section \ref{sec: instances} for details).
Thus, we generated 125 strongly feasible instances. We set the upper limit of the execution time to 2 hours and compared the performance of our method with those of Louren\c{c}o (2019), Pena(2017) and Mosek.
When using Mosek, we set the primal feasibility tolerance to 1e-12.

Tables \ref{strongly-feasible-MVN-SP1} and \ref{strongly-feasible-MVN-SP2} list the results for the (ill-conditioned) strongly feasible case. The ``CO-ratio'' column shows the ratio of \modifySecond{$|N_2|/|N_1|$ where $N_1$ is the set of problems for which the algorithm terminated within 2 hours, the upper limit of the execution time, and $N_2 \subseteq N_1$ is the set of problems for which a correct output is obtained}, the ``times(s)'' column shows the average CPU time of the method, the ``M-iter'' column shows the average iteration number of each main algorithm, the $\|\mathcal{A}(X^*)\|_2$ column shows the residual of the constraints, \modifyFirst{and the $\lambda_{\min}(X^*)$ column shows the minimum eigenvalue of $X^*$.} The ``BP'' column shows which scheme  (the modified von Neumann (MVN) or the smooth perceptron (SP)) was used in the basic procedure.
The values in parentheses () in row $\mu \approx \mbox{1e-100}$ are the average values excluding instances for which the method ended up running out of time. 

First, we compare the results when using MVN or SP as the basic procedure in each method.
From Table \ref{strongly-feasible-MVN-SP1}, we can see that for strongly-feasible problems, using SP as the basic procedure has a shorter average execution time than using MVN.
Next, we compare the results of each method.
For $\mu \simeq \mbox{1e-50}$, there was no significant difference in performance among the three methods.
For $\mu \leq \mbox{1e-100}$, the results in the rows BP=MVN show that our method and Louren\c{c}o (2019) obtained interior feasible solutions for all problems, while Pena (2017) ended up running out of time for 99 instances.
This is because Pena (2017) needs to call its basic procedure to find a solution of $\mbox{range} \hspace{0.3mm} \mathcal{A}^* \cap \mathbb{S}^n_{++}$. Comparing our method  with Louren\c{c}o (2019), we see that \modifySecond{Algorithm \ref{main algorithm}} is superior in terms of CPU time.
Finally, we compare the results for each value of $\mu$. As $\mu$ becomes smaller, i.e., as the problem becomes more ill-conditioned, the number of scaling times and the execution time increase, and the accuracy of the obtained solution gets worse.

\begin{table}
\caption{Results for strongly feasible instances (Correct output (CO-) ratio, CPU time and \modifyFirst{and M-iter})}
\label{strongly-feasible-MVN-SP1}
\begin{center}
\begin{tabular}{ll|rrr|rrr|rrr} \toprule
			&	&\multicolumn{3}{c|}{Algorithm \ref{main algorithm}} & \multicolumn{3}{c|}{Louren\c{c}o (2019)} & \multicolumn{3}{c}{Pena (2017)} \\ 
Instance		&BP	&CO-ratio&time(s)&M-iter&CO-ratio&time(s)&M-iter&CO-ratio&time(s)&M-iter	\\ \midrule
\multirow{2}{*}{$\mu \simeq $ 1e-50}		&MVN	&25/25	&7.81		&3.28	&25/25	&25.94	&14.48&25/25	&3.60 &1.00	\\
							&SP		&25/25	&0.75		&1.00	&25/25	&10.12	&14.08&25/25	&0.80 &1.00	\\ 
\multirow{2}{*}{$\mu \simeq $ 1e-100}	&MVN	&25/25	&51.62	&53.12&25/25	&448.05	&329.04&1/1	&(4513.59)&(2.00)	 \\
							&SP		&25/25	&32.11	&36.04&25/25	&256.24	&365.76&25/25	&123.65 &23.32	\\ 
\multirow{2}{*}{$\mu \simeq $ 1e-150}	&MVN	&25/25	&99.39	&118.12&25/25	&888.25	&728.68&-	&- &-	\\
							&SP		&25/25	&76.98	&91.96&25/25	&520.73	&756.36&25/25	&781.88 &117.32	\\ 
\multirow{2}{*}{$\mu \simeq $ 1e-200}	&MVN	&25/25	&144.48	&185.40&25/25	&1328.68	&1145.40&-	&- &-	\\
							&SP		&25/25	&118.06	&151.44&25/25	&789.29	&1150.20&25/25	&1874.20 &236.44	\\ 
\multirow{2}{*}{$\mu \simeq $ 1e-250}	&MVN	&25/25	&188.11	&251.44&25/25	&1827.24	&1601.20&-	&- &-	\\
							&SP		&25/25	&162.67	&215.12&25/25	&1074.07	&1564.80&25/25	&3308.35 &376.24	\\\bottomrule
\end{tabular}
\end{center}
\end{table}
\begin{table}
\caption{\modifyFirst{Results for ill-conditioned strongly feasible instances ($\|\mathcal{A}(X^*)\|_2$ and $\lambda_{\min} (X^*)$)}}
\label{strongly-feasible-MVN-SP2}
\begin{center}
\modifyFirst{
\begin{tabular}{ll|rr|rr|rr} \toprule
			&	&\multicolumn{2}{c|}{Algorithm \ref{main algorithm}} & \multicolumn{2}{c|}{Louren\c{c}o (2019)} & \multicolumn{2}{c}{Pena (2017)} \\ 
Instance		&BP	&$\| \mathcal{A} (X^*)\|_2$&$\lambda_{\min} (X^*)$&$\| \mathcal{A} (X^*)\|_2$&$\lambda_{\min} (X^*)$&$\| \mathcal{A} (X^*)\|_2$&$\lambda_{\min} (X^*)$ \\ \midrule
\multirow{2}{*}{$\mu \simeq $ 1e-50}		&MVN		&1.24e-11	&4.42e-4		&7.64e-12	&3.60e-4	&1.27e-11 &3.95e-4	\\
							&SP			&1.23e-11	&8.48e-4		&8.22e-12	&8.10e-4	&1.23e-11 &8.48e-4	\\ 
\multirow{2}{*}{$\mu \simeq $ 1e-100}	&MVN		&9.98e-12	&2.73e-6		&1.26e-11	&3.01e-6	&(1.07e-8) &(3.34e-6)\\
							&SP			&4.18e-11	&3.19e-5		&1.10e-11	&3.75e-5	&5.38e-9 &3.39e-6	\\ 
\multirow{2}{*}{$\mu \simeq $ 1e-150}	&MVN		&1.96e-10	&5.24e-8		&4.29e-10	&4.10e-8	&- &-\\
							&SP			&2.21e-10	&3.98e-7		&5.60e-10	&5.54e-7	&6.31e-9 &3.78e-7	\\ 
\multirow{2}{*}{$\mu \simeq $ 1e-200}	&MVN		&1.51e-8	&7.86e-10		&4.76e-8	&1.18e-9	&- &-\\
							&SP			&1.09e-8	&4.87e-9		&3.81e-8	&5.93e-9	&1.72e-8 &5.06e-9	\\ 
\multirow{2}{*}{$\mu \simeq $ 1e-250}	&MVN		&9.51e-7	&8.43e-12		&2.58e-6	&2.52e-11	&- &-\\
							&SP			&1.72e-6	&5.14e-11		&3.35e-6	&7.05e-11	&1.73e-6 &5.40e-11	\\ \bottomrule
\end{tabular}
}
\end{center}
\end{table}

Table \ref{strongly-feasible-Mosek} summarizes the results of our experiments using Mosek to solve strongly feasible ill-conditioned instances. Mosek sometimes returned the error message ``rescode = 10006'' for the $\mu \leq 1e-200$ instances. This error message means that "the optimizer is terminated due to slow progress." In this case, the obtained solution is not guaranteed to be optimal, but it may have sufficient accuracy as a feasible solution. Therefore, we took the CO-ratio when the residual $\|\mathcal{A}(X^*)\|_2$ is less than or equal to 1e-5 to be the correct output. The reason why we set the threshold to 1e-5 is that the maximum value of $\|\mathcal{A}(X^*)\|_2$ was less than 1e-5 among the $X^*$ values obtained for the strongly feasible ill-conditioned instances by the three methods, Algorithm \ref{main algorithm}, Louren\c{c}o (2019) and Pena (2017). On the other hand, for the $\mu \leq 1e-200$ instances, the Chubanov methods had higher CO-ratios. That is, when the problem was quite ill-conditioned, the solution obtained by each of the Chubanov methods had a smaller value of $\| \mathcal{A}(X^*)\|_2$ compared with the solution obtained by Mosek, which implies that the accuracy of the solution obtained by each of the Chubanov methods was higher than that of Mosek.

\begin{table}
\caption{Results for ill-conditioned strongly feasible instances with Mosek}
\label{strongly-feasible-Mosek}
\begin{center}
\begin{tabular}{l|rrrr} \toprule
Instance			&CO-ratio	&time(s)	&$\| \mathcal{A} (X^*)\|_2$	&\modifyFirst{$\lambda_{\min} (X^*)$}	\\ \midrule
$\mu \simeq $ 1e-50	&25/25 	&1.96		&8.73e-13 	&7.99e-3	\\
$\mu \simeq $ 1e-100	&25/25 	&3.18		&1.87e-12 	&4.51e-5	\\
$\mu \simeq $ 1e-150	&25/25 	&3.72		&2.48e-10 	&4.45e-7	\\
$\mu \simeq $ 1e-200	&21/25 	&6.56 		&2.58e-7 	&4.35e-9	\\
$\mu \simeq $ 1e-250	&1/25 	&6.88 		&2.57e-7 	&5.37e-11	\\	\bottomrule
\end{tabular}
\end{center}
\end{table}

\section{Concluding remarks}
\label{sec: concluding remarks}

In this study, we proposed a new version of Chubanov's method for solving the feasibility problem over the symmetric cone by extending Roos's method \cite{Roos2018} for the feasible problem over the nonnegative orthant. Our method has the following features:
\begin{itemize}
\item Using the norm $\| \cdot \|_\infty$ in problem ${\rm P}_{S_\infty} (\mathcal{A})$ makes it possible to (i) calculate the upper bound for the minimum eigenvalue of any feasible solution of ${\rm P}_{S_\infty} (\mathcal{A})$, (ii) quantify the feasible region of ${\rm P} (\mathcal{A})$, and hence (iii) determine whether there exists a feasible solution of ${\rm P} (\mathcal{A})$ whose minimum eigenvalue is greater than $\epsilon$ as in \cite{Bruno2019}.
\item In terms of the computational bound, our method is (i) equivalent to Roos's original method \cite{Roos2018} and superior to Louren\c{c}o et al.'s method \cite{Bruno2019} when the symmetric cone is the nonnegative orthant, (ii) superior to Louren\c{c}o et al.'s when the symmetric cone is a Cartesian product of second-order cones, (iii) equivalent to Louren\c{c}o et al.'s when the symmetric cone is the simple positive semidefinite cone, under the assumption that the costs of computing the spectral decomposition and the minimum eigenvalue are of the same order for any given symmetric matrix, and (iv) superior to Pena and Soheili's method~\cite{Pena2017} for any simple symmetric cones under the assumption that ${\rm P} (\mathcal{A})$ is feasible.
\end{itemize}

We also conducted comprehensive numerical experiments comaring our method with the existing mtehods of Chubanov \cite{Bruno2019,Pena2017} and Mosek.
Our numerical results showed that
\begin{itemize}
\item It is considerably faster than the existing methods on ill-conditioned strongly feasible instances.
\item Mosek was the better than Chubanov’s methods in terms of execution time. On the other hand, in terms of the accuracy of the solution (the value of $\|\mathcal{A}(X^*)\|_2$), we found that all of Chubanov’s methods are better than Mosek. In particular, we have seen such results for strongly-feasible (terribly) ill-conditioned ($\mu \simeq 1e-250$) instances.
\end{itemize}

\modifyFirst{
In this paper, we performed computer experiments by setting $\xi=1/4$ in the basic procedure to avoid inducements for calculation errors, but there is room for further study on how to choose the value of $\xi$.
For example, if the problem size is large, the computation of the projection $\mathcal{P}_\mathcal{A}$ is expected to take much more time.
In this case, rather than setting $\xi=1/4$, running the algorithm as $\xi < 1/4$ may reduce the number of scaling steps to be performed before completion. As a result, the algorithm's run time may be shorter than when we set $\xi=1/4$.
More desirable approach may be to choose an appropriate value of $\xi$ at each iteration along to the algorithm's progress.
}

\section*{Acknowledgments}
We would like to express our deep gratitude to the reviewers and editors for their many valuable comments. 
Their comments significantly enriched the content of this paper, especially sections \ref{sec: extension}, \ref{sec: main algorithm}, \ref{sec: compare}, and  \ref{sec: numerical experiments}.
We also would like to sincerely thank Daisuke Sagaki for essential ideas on the proof of Proposition \ref{prop:compare tr}, and Yasunori Futamura for helpful information about the computational cost of the eigenvalue calculation in Section \ref{sec: CSD and Cmin}.
We could not complete this paper without their support.
This work was supported by JSPS KAKENHI Grant Numbers (B)19H02373 and JP 21J20875.

\appendix

\section{Basic procedure}
\label{app:modified basic procedure}
 \begin{algorithm}[H]
 \caption{Basic procedure (Modified von Neumann scheme)}
 \label{bp-alg-mvn}
 \begin{algorithmic}[1]
 \renewcommand{\algorithmicrequire}{\textbf{Input: }}
 \renewcommand{\algorithmicensure}{\textbf{Output: }}
 \renewcommand{\stop}{\textbf{stop }}
 \renewcommand{\return}{\textbf{return }}

 \STATE Same as lines 1-3 of Algorithm \ref{basic procedure}
 \WHILE{ $k \leq \frac{p^2 r_{\max}^2}{\xi^2}$}
 \STATE Same as lines 5-23 of Algorithm \ref{basic procedure}
 \FOR {$\ell \in \{1 , \dots ,p \}$}
 \STATE $S_\ell \leftarrow \{ i  \mid  {\lambda(z_\ell^k)}_i \leq 0\}$ and then $u_\ell \leftarrow \sum_{i \in S_\ell} {c(z_\ell^k)}_i$
 \ENDFOR
 \STATE $u \leftarrow \frac{1}{\sum_{\ell=1}^p |S_\ell|} u$ and $y^{k+1} \leftarrow \alpha y^k + (1-\alpha) u$, where $\alpha = \frac{ \langle P_\mathcal{A} (u) , P_\mathcal{A} (u) - z^k \rangle }{\|z^k-P_\mathcal{A} (u)\|^2_J} $
 \STATE $k \leftarrow k+1$ , $z^k \leftarrow P_\mathcal{A}(y^k)$ and $v^k \leftarrow y^k - z^k$
 \ENDWHILE
 \end{algorithmic} 
 \end{algorithm}
Below, we describe the results of updating $y^k$ with the smooth perceptron scheme as described in~\cite{Pena2017}.
Given $\mu > 0$, we define operator $u_\mu (\cdot) : \mathbb{E} \rightarrow \{ u \in \mathcal{K} \mid \langle u,e \rangle = 1\}$ as $u_\mu (v) := \argmin_{u \in \mathcal{K}, \langle u,e \rangle =1} \left\{ \langle u,v \rangle + \frac{\mu}{2} \| u - \bar{u} \|^2_J \right\}$.
 \begin{algorithm}
 \caption{Basic procedure (Smooth perceptron scheme)}
 \label{bp-alg-sp}
 \begin{algorithmic}[1]
 \renewcommand{\algorithmicrequire}{\textbf{Input: }}
 \renewcommand{\algorithmicensure}{\textbf{Output: }}
 \renewcommand{\stop}{\textbf{stop }}
 \renewcommand{\return}{\textbf{return }}

 \STATE \algorithmicrequire $P_\mathcal{A}$ and $\xi$ such that a constant $0 < \xi < 1$
 \STATE \algorithmicensure (i) a solution to ${\rm P}(\mathcal{A})$ or (ii) ${\rm D}(\mathcal{A})$ or (iii) a certificate that, for any feasible solution $x$ to ${\rm P}_{S_\infty}(\mathcal{A})$, $\langle e,x \rangle < r$
 \STATE initialization :  $\bar{u} \leftarrow \frac{1}{r}e$, $\mu^0 \leftarrow 2$, $u^0 \leftarrow \bar{u}$, $k \leftarrow 0, H_1 , \dots , H_p  = \emptyset$.
 \STATE compute $y^0 \leftarrow u_{\mu_0} \left( P_\mathcal{A}(u^0) \right), z^0 \leftarrow  P_\mathcal{A}(y^0),  v^0 \leftarrow y^0 - z^0$.
 \WHILE{$k \leq \frac{2 \sqrt{2} p r_{\max}}{\xi} -1$}
 \STATE Same as lines 5-23 of Algorithm \ref{basic procedure}
 \STATE $\theta^k \leftarrow \frac{2}{k+3}$ and $u^{k+1} \leftarrow (1-\theta^k) (u^k + \theta^k y^k) + (\theta^k)^2 u_{\mu^k} \left( P_\mathcal{A}(u^k) \right)$
 \STATE $\mu^{k+1} \leftarrow  (1-\theta^k) \mu^k$ and $y^{k+1} \leftarrow  (1-\theta^k) y^k +  \theta^k  u_{\mu^{k+1}} \left( P_\mathcal{A}(u^{k+1}) \right)$
 \STATE $k \leftarrow k+1$ , $z^k \leftarrow P_\mathcal{A}(y^k)$ and $v^k \leftarrow y^k - z^k$
 \ENDWHILE
 \end{algorithmic} 
 \end{algorithm}

\newpage
\mbox{ }
\newpage

\section{Another new main algorithm}

Here, we introduce another new main algorithm, Algorithm \ref{main algorithm 2}, whose computational cost may not be given in polynomials but might better determine $\epsilon$-feasible solutions.

\subsection{Outline of Algorithm \ref{main algorithm 2}}
\label{app:another-ma}

	The procedures of Algorithm \ref{main algorithm 2} are almost identical to Algorithm \ref{main algorithm}, except for one of the termination criteria (the criterion indicating the non-existence of $\varepsilon$-feasible solutions). Specifically, to set the upper bound for the minimum eigenvalue of any feasible solution $x$ of \modifyFirst{${\rm P}_{S_{\infty}}(\mathcal{A})$}, Algorithm \ref{main algorithm} focuses on the product $\det (\bar{x})$ of the eigenvalues of the arbitrary feasible solution $\bar{x}$ of the scaled problem ${\rm P}_{S_{\infty}}(\mathcal{A}^k Q^k)$, while Algorithm \ref{main algorithm 2} focuses on the sum $\langle \bar{x} , e \rangle$ of the eigenvalues. Algorithm \ref{main algorithm 2} works as follows.

\begin{enumerate}
\item If a feasible solution of ${\rm P}(\mathcal{A})$ or ${\rm D}(\mathcal{A})$ is returned from the basic procedure, the feasibility of ${\rm P}(\mathcal {A})$ can be determined, and we stop the main algorithm.
\item If the basic procedure returns the sets of indices $H_1 , \dots , H_p$ and the sets of primitive idempotents $C_1 , \dots , C_p$ that construct the corresponding Jordan frames, then
\begin{description}
\item in Algorithm \ref{main algorithm 2}:
\begin{enumerate}
\item if $\frac{r_\ell}{\left( r_\ell + \left( \frac{1}{\xi} -1 \right)m_\ell \right)} < \varepsilon$ holds for some $\ell \in \{ 1, \dots p \}$, we  determine that ${{\rm P}_{S_{\infty}}}(\mathcal{A})$ has no $\varepsilon$-feasible solution according to Proposition \ref{prop:lambda-min-upper-2} and stop the main algorithm, 
\item if $\frac{r_\ell}{\left( r_\ell + \left( \frac{1}{\xi} -1 \right)m_\ell \right)} \geq \varepsilon$ holds for any $\ell \in \{ 1, \dots p \}$, we rescale the problem and call the basic procedure again.
\end{enumerate}
\end{description}
\end{enumerate}

\begin{table}[H]
\caption{Upper bounds on the number of iterations of the main algorithms (cf. proposition \ref{prop:iteration-num-ma} and proposition \ref{prop:iteration-num-ma-2})}
\begin{center}
\label{compare_max_itr_MA}
\begin{tabular}{lc} \toprule
Main Algorithm				&Upper bound on \# of iterations	\\ \midrule
Algorithm \ref{main algorithm}		&$-\frac{r}{\log \xi} \log \left( \frac{1}{\varepsilon}\right) - p + 1$	\\
Algorithm \ref{main algorithm 2}	&$\frac{\xi}{1-\xi} \left( \frac{1}{\varepsilon} -1  \right) r - p + 1$	\\\bottomrule
\end{tabular}
\end{center}
\end{table}

Table \ref{compare_max_itr_MA} lists upper bounds on the numbers of iterations required by Algorithms \ref{main algorithm} and \ref{main algorithm 2}.
As shown in the table, Algorithm \ref{main algorithm} can be said to be a polynomial-time algorithm, but Algorithm \ref{main algorithm 2} is not. On the other hand, the results of the numerical experiments in Appendix \ref{app: numerical results} show that Algorithm \ref{main algorithm 2} is superior to Algorithm \ref{main algorithm} at detecting $\varepsilon$-feasibility for the generated instances.

 \begin{algorithm}[H]
 \caption{Main algorithm using another criteria for $\varepsilon$-feasibility}
 \label{main algorithm 2}
 \begin{algorithmic}[1]
 \renewcommand{\algorithmicrequire}{\textbf{Input: }}
 \renewcommand{\algorithmicensure}{\textbf{Output: }}
  \renewcommand{\stop}{\textbf{stop }}
 \renewcommand{\return}{\textbf{return }}

 \STATE \algorithmicrequire $\mathcal{A}$, $\mathcal{K}$, $\varepsilon$ and a constant $\xi$ such that $0 < \xi < 1$
 \STATE \algorithmicensure a solution to ${\rm P}(\mathcal{A})$ or ${\rm D}(\mathcal{A})$ or a certificate that there is no $\varepsilon$ feasible solution.
 \STATE $k \leftarrow 1$ , $\mathcal{A}^1 \leftarrow \mathcal{A}$ , $m_\ell \leftarrow 0 $ , $\bar{Q_\ell} \leftarrow I_\ell$, \modifyThird{$RP_\ell \leftarrow I_\ell$, $RD_\ell \leftarrow I_\ell$} for all $\ell \in \{1,\dots,p\}$
	  \STATE Compute \modifyFirst{$P_{\mathcal{A}^k}$} and call the basic procedure with \modifyFirst{$P_{\mathcal{A}^k}$}, $\frac{1}{r}e$, $\xi$
 \IF { basic procedure returns $z$ }
 \STATE \stop main algorithm and \return \modifyThird{$RPz$ ($RPz$ is a feasible solution of ${\rm P}(\mathcal{A})$)}
 \ELSIF { basic procedure returns $y$ or $v$ }
 \STATE \stop main algorithm and \return \modifyThird{$RDy$ or $RDv$ ( $RDy$ or $RDv$ is a feasible solution of $D(\mathcal{A})$)}
 \ELSIF  {basic procedure returns $H^k_1 , \dots , H^k_p$ and $C^k_1 , \dots , C^k_p$}
 \FOR {$\ell \in \{1,\dots,p\}$}
 \IF{$|H^k_\ell| > 0$}
 \STATE $g_\ell \leftarrow \sqrt{\xi} \sum_{h \in H^k_\ell} c^k(v_\ell)_h + \sum_{h \notin H_\ell^k}^{r_\ell} c^k(v_\ell)_h$ 
 \STATE $Q_\ell \leftarrow Q_{g_\ell}$, \modifyThird{$RP_\ell \leftarrow RP_\ell Q_{g_\ell}$, $RD_\ell \leftarrow RD_\ell Q_{g^{-1}_\ell}$}
 \STATE $m_\ell \leftarrow \left \langle \bar{Q_\ell} \left( \sum_{h \in H^k_\ell} c^k(v_\ell)_h \right) , e_\ell \right \rangle + m_\ell$
 \IF {$ \frac{r_\ell}{\left( r_\ell + \left( \frac{1}{\xi} - 1 \right) m_\ell \right)} \leq \varepsilon$}
 \STATE { \stop main algorithm. There is no $\varepsilon$ feasible solution.}
 \ENDIF
 \STATE $\bar{Q_\ell} \leftarrow  \bar{Q_\ell} Q_{g_\ell^{-1}}$
 \ELSE
 \STATE $ Q_\ell \leftarrow I_\ell$
 \ENDIF
 \ENDFOR
 \ELSE
 \STATE \return basic procedure error
 \ENDIF
 \STATE Let $Q^k = (Q_1 , \dots , Q_p)$
 \STATE $\mathcal{A}^{k+1} \leftarrow \mathcal{A}^k Q^k$ , $k \leftarrow k+1$. Go back to line 4.
 \end{algorithmic} 
 \end{algorithm}


\subsection{Finite termination of Algorithm \ref{main algorithm 2}}
Here, we prove finite termination of Algorithm \ref{main algorithm 2}.
Before going into the proof, we explain Algorithm \ref{main algorithm 2} in more detail than in Appendix \ref{app:another-ma}.
In Algorithm \ref{main algorithm 2}, when a cut is obtained in the $\ell$-th block, it computes the value of $\left \langle \bar{Q_\ell} \left( \sum_{h \in H^k_\ell} c^k(v_\ell)_h \right) , e_\ell \right \rangle$ and stores its cumulative value in $m_\ell$. In fact, using this $m_\ell$, we can compute an upper bound for $\lambda_{\rm min} (x)$ (Proposition \ref{prop:lambda-min-upper-2}). On line 18, $\bar{Q}_\ell$ is updated as $\bar{Q}_\ell \leftarrow \bar{Q}_\ell Q_{g_\ell^{-1}}$, and $\bar{Q}_\ell$ of Algorithm \ref{main algorithm 2} plays the role of an operator that gives the relation $\langle \bar{x}_\ell , a_\ell \rangle = \langle x_\ell, \bar{Q}_\ell (a_\ell) \rangle$ for the solution $x$ of the original problem, the solution $\bar{x}$ of the scaled problem, and any $a \in \mathbb{E}$. For example, as before, if $|H^1_\ell| > 0$ for $k=1$, then $\langle \bar{x}_\ell , a_\ell \rangle = \langle Q_{g_\ell^{-1}}(x_\ell) , a_\ell \rangle =\langle x_\ell , Q_{g_\ell^{-1}}(a_\ell) \rangle$ is valid. And if $|H^2_\ell| > 0$ even for $k=2$, then the proposed method scales $\bar{x}$ again, so that $\langle \bar{\bar{x}}_\ell ,a_\ell \rangle = \langle \bar{x}_\ell , Q_{g_\ell^{-1}}(a_\ell) \rangle = \langle x_\ell , \bar{Q}_\ell (a_\ell) \rangle $ holds.

Proposition \ref{prop:lambda-min-upper} guarantees that $\varepsilon$-feasibility of the problem ${\rm P}(\mathcal{A})$ can be detected by computing $\det(\bar{x})$ of any feasible solution of $ {\rm P}_{S_{\infty}}(\mathcal{A}^kQ^k)$. The following proposition ensures that we may use the value $\langle \bar{x} , e \rangle$ of any feasible solution of $ {\rm P}_{S_{\infty}}(\mathcal{A}^kQ^k)$ to detect the $\varepsilon$-feasibility of problem ${\rm P}(\mathcal{A})$, instead of $\det(\bar{x})$. While the analysis of the computational complexity in section \ref{sec: finite termination of MA} does not hold for it, the new criteria is better able to detect $\varepsilon$-feasibility in the numerical experiments presented in Appendix \ref{app: numerical results} .

\begin{proposition}
\label{prop:lambda-min-upper-2}
After $k$ iterations of Algorithm \ref{main algorithm 2}, for any feasible  solution $x$ of ${\rm P}_{S_{\infty}}(\mathcal{A})$ and $ \ell \in \{1 , \dots , p\}$, the $\ell$-th block element $x_\ell$ of $x$ satisfies
\begin{equation}
\lambda_{\min}(x_\ell) \leq \cfrac{r_\ell}{\left( r_\ell + \left( \frac{1}{\xi} - 1 \right) m_\ell \right)} . \label{prop:11-0}
\end{equation}
\end{proposition}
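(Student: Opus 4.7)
The plan is to convert the trivial bound $\langle e_\ell, \bar{x}^{(k)}_\ell\rangle\le r_\ell$ (coming from $\|\bar{x}^{(k)}\|_\infty\le 1$) back to a statement about the original $x_\ell$ via the accumulated scaling, and then to compute $\langle e_\ell,\bar{Q}_\ell^{(k)}(e_\ell)\rangle$ explicitly, showing it equals $r_\ell+(\tfrac{1}{\xi}-1)m_\ell$.

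First, let $\bar{Q}_\ell^{(k)}$ denote the value of $\bar{Q}_\ell$ after $k$ iterations of Algorithm \ref{main algorithm 2} (with $\bar{Q}_\ell^{(0)}=I_\ell$), and let $\bar{x}^{(k)}$ be the point in the $k$-th scaled problem ${\rm P}_{S_\infty}(\mathcal{A}^{k+1})$ corresponding to the original feasible $x$, so that $\bar{x}^{(k)}_\ell = Q_{g_\ell^{(k)-1}}\cdots Q_{g_\ell^{(1)-1}}(x_\ell)$. Because each $Q_v$ is self-adjoint with respect to $\langle\cdot,\cdot\rangle$ (directly from the inner-product axiom $\langle x\circ y,z\rangle=\langle y,x\circ z\rangle$ applied to the definition of $Q_v$), I obtain the identity $\langle \bar{x}^{(k)}_\ell,a\rangle=\langle x_\ell,\bar{Q}_\ell^{(k)}(a)\rangle$ for every $a\in\mathbb{E}_\ell$, using the update rule $\bar{Q}_\ell\leftarrow\bar{Q}_\ell Q_{g_\ell^{-1}}$ on line 18. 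Setting $a=e_\ell$ and using $\langle \bar{x}^{(k)}_\ell,e_\ell\rangle\le r_\ell$ together with $x_\ell-\lambda_{\min}(x_\ell)e_\ell\in\mathcal{K}_\ell$ and $\bar{Q}_\ell^{(k)}(e_\ell)\in\mathcal{K}_\ell$ (the latter by iterated application of Proposition \ref{ptop:quadratic}, since each $g_\ell^{(j)-1}\in{\rm int}\mathcal{K}_\ell$), self-duality of $\mathcal{K}_\ell$ yields
\begin{equation}
\lambda_{\min}(x_\ell)\,\langle e_\ell,\bar{Q}_\ell^{(k)}(e_\ell)\rangle \le \langle x_\ell,\bar{Q}_\ell^{(k)}(e_\ell)\rangle = \langle \bar{x}^{(k)}_\ell,e_\ell\rangle \le r_\ell. \notag
\end{equation}

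Next I would compute $\langle e_\ell,\bar{Q}_\ell^{(k)}(e_\ell)\rangle$ in closed form. From the computation in the proof of Proposition \ref{prop:scaling}, one has $Q_{g_\ell^{-1}}(e_\ell)=e_\ell+(\tfrac{1}{\xi}-1)\sum_{h\in H}c_h$. Writing $s^{(j)}:=\sum_{h\in H^j_\ell}c^j(v_\ell)_h$ and exploiting $\bar{Q}_\ell^{(k)}=\bar{Q}_\ell^{(k-1)}Q_{g_\ell^{(k)-1}}$ together with the linearity of $\bar{Q}_\ell^{(k-1)}$, a straightforward induction on $k$ gives
\begin{equation}
\bar{Q}_\ell^{(k)}(e_\ell) = e_\ell + \left(\tfrac{1}{\xi}-1\right)\sum_{j=1}^{k}\bar{Q}_\ell^{(j-1)}\!\left(s^{(j)}\right). \notag
\end{equation}
Taking $\langle e_\ell,\cdot\rangle$ on both sides and observing that line 14 of Algorithm \ref{main algorithm 2} increments $m_\ell$ by exactly $\langle \bar{Q}_\ell^{(j-1)}(s^{(j)}),e_\ell\rangle$ at each iteration $j$ (including the trivial contribution $0$ when $|H^j_\ell|=0$, where $\bar{Q}_\ell$ is not updated) yields $\langle e_\ell,\bar{Q}_\ell^{(k)}(e_\ell)\rangle=r_\ell+(\tfrac{1}{\xi}-1)m_\ell$. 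Substituting this into the previous display and dividing (the denominator is strictly positive since $\bar{Q}_\ell^{(k)}(e_\ell)\in{\rm int}\mathcal{K}_\ell$) gives (\ref{prop:11-0}).

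The main subtlety will be the bookkeeping of the composition order in $\bar{Q}_\ell$: Algorithm \ref{main algorithm 2} updates it as $\bar{Q}_\ell\leftarrow\bar{Q}_\ell Q_{g_\ell^{-1}}$, the opposite of the convention in Algorithm \ref{main algorithm}. This reversal is precisely what makes $\bar{Q}_\ell^{(k)}$ behave as the adjoint of the scaling applied to $x$, so that the identity $\langle\bar{x}^{(k)}_\ell,a\rangle=\langle x_\ell,\bar{Q}_\ell^{(k)}(a)\rangle$ holds; once this is set up correctly, the rest of the argument is routine linearity.
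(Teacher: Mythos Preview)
Your argument is correct and reaches the same two ingredients as the paper---the inequality $\langle x_\ell,\bar{Q}_\ell^{(k)}(e_\ell)\rangle\le r_\ell$ and the identity $\langle e_\ell,\bar{Q}_\ell^{(k)}(e_\ell)\rangle=r_\ell+(\tfrac{1}{\xi}-1)m_\ell$---but organizes them more directly. The paper instead singles out the \emph{last} good iteration $k'$ for block $\ell$, works with $\bar{Q}_\ell$ as it stands at the \emph{beginning} of that iteration, invokes the cut inequalities (\ref{prop:11-0.5}) there explicitly to bound $\langle x_\ell,\bar{Q}_\ell(e_\ell)\rangle$, and then uses the recursion (\ref{prop:11-2}) (which is precisely your inductive step for $\bar{Q}_\ell^{(k)}(e_\ell)$) to obtain $\langle e_\ell,\bar{Q}_\ell(e_\ell)\rangle=r_\ell+(\tfrac{1}{\xi}-1)m_\ell^{\rm pre}$, finally adding back the contribution of iteration $k'$ via (\ref{update-m}). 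Unwinding the paper's chain of inequalities shows it amounts exactly to your trace bound $\langle x_\ell,\bar{Q}_\ell^{(k)}(e_\ell)\rangle\le r_\ell$. Your route is shorter because the trace bound on the fully scaled point already absorbs the effect of the last cut, so there is no need to split off iteration $k'$; the paper's route, on the other hand, makes the mechanism by which the basic-procedure cuts drive the bound more visible. Both arguments rest on the same premise you state at the outset, namely that the image $\bar{x}^{(k)}$ of a feasible $x$ under the accumulated scaling is feasible for ${\rm P}_{S_\infty}(\mathcal{A}^{k+1})$.
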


\begin{proof}
In Algorithm \ref{main algorithm 2}, $m_\ell$ is updated only when $|H_\ell^k|>0$. 
Suppose that, at the end of  the $k$-th iteration of Algorithm \ref{main algorithm 2}, the last update of $m_\ell$ had been at the $k'(\leq k)$-th iteration.
Then, the stopping criteria of the basic procedure guarantees that at the beginning of the $k'$-th iteration,  $\bar{Q_\ell}$
satisfies
\begin{equation}
\langle x, \bar{Q_\ell}(c^{k'}(v_\ell)_i) \rangle \leq 
\begin{cases}
\xi 	&i \in  H_\ell^{k'} \\
1	&i \notin H_\ell^{k'}
\end{cases}
.
\label{prop:11-0.5}
\end{equation}

This gives a lower bound for $|H_\ell^{k'}|$:
\begin{equation}
\frac{1}{\xi} \left \langle x, \bar{Q_\ell} \left( \sum_{i \in H_\ell^{k'} }c^{k'}(v_\ell)_i \right) \right \rangle \leq |H_\ell^{k'}|. \label{prop:11-0.75}
\end{equation}

Using the fact that $x_\ell - \lambda_{\min} (x_\ell) e_\ell \in \mathcal{K}_\ell$, we obtain
\begin{align*}
\lambda_{\min} (x_\ell) \langle e_\ell , \bar{Q_\ell} (e_\ell) \rangle  &\leq \langle x_\ell , \bar{Q_\ell} (e_\ell) \rangle \\
&= \left \langle x_\ell , \bar{Q_\ell} \left( \sum_{j \notin H_\ell^{k'}} {c^{k'}(v_\ell)}_j \right) \right \rangle + \left \langle x_\ell , \bar{Q_\ell} \left( \sum_{j \in H_\ell^{k'}} {c^{k'}(v_\ell)}_j \right) \right \rangle \\
&\leq r_\ell - |H_\ell^{k'}| + \left \langle x_\ell , \bar{Q_\ell} \left( \sum_{j \in H_\ell^{k'}} {c^{k'}(v_\ell)}_j \right) \right \rangle    \ \ \ \mbox{(by  (\ref{prop:11-0.5}))}  \\
&\leq r_\ell  - \left( \cfrac{1}{\xi} - 1 \right) \left \langle x_\ell  , \bar{Q_\ell} \left( \sum_{j \in H_\ell^{k'}} {c^{k'}(v_\ell)}_j \right) \right \rangle  \ \ \ \mbox{(by  (\ref{prop:11-0.75}))} \\
&\leq r_\ell  - \left( \cfrac{1}{\xi} - 1 \right) \lambda_{\min} (x_\ell) \left \langle e_\ell , \bar{Q_\ell} \left( \sum_{j \in H_\ell^{k'}} {c^{k'}(v_\ell)}_j \right) \right \rangle,
\end{align*}
and hence,  
\begin{equation}
\lambda_{\min} (x_\ell) \left( \left \langle e_\ell , \bar{Q_\ell} (e_\ell) \right \rangle + \left( \cfrac{1}{\xi} - 1 \right)  \left \langle e_\ell , \bar{Q_\ell} \left( \sum_{j \in H_\ell^{k'}} {c^{k'}(v_\ell)}_j \right) \right \rangle \right) \leq r_\ell. \label{prop:11-1}
\end{equation}

Next, suppose that, at the beginning of the $k'$-th iteration of Algorithm \ref{main algorithm 2}, the last update of $m_\ell$ had been performed at the $i(<k')$-th iteration.

Let $\bar{Q_\ell}^{\rm pre}$ be $\bar{Q_\ell}$ obtained at the beginning of the $i$-th iteration of Algorithm \ref{main algorithm 2}, and let $Q_{g_\ell}^{\rm pre}$ and $m_\ell^{\rm pre}$ be $Q_{\ell}$ and $m_\ell$ obtained after the update at the $i$-th iteration. 
Note that  $\bar{Q_\ell}$ at the beginning of the $k'$-th iteration of Algorithm \ref{main algorithm 2} can be represented by $\bar{Q_\ell} = \bar{Q_\ell}^{\rm pre} Q_{g_\ell^{-1}}^{\rm pre}$.
Thus, from (\ref{eq:prop3.4-1}), we see that 
\begin{align*}
 Q_{g_\ell^{-1}}^{\rm pre}(e_\ell) 
 & =  Q_{g_\ell^{-1}}^{\rm pre} \left( \sum_{j=1}^{r_\ell} {c^i(v_\ell)}_j \right) \\
 & =  Q_{g_\ell^{-1}}^{\rm pre} \left( \sum_{j \in H^{k'}_\ell}  {c^i(v_\ell)}_j \right) + Q_{g_\ell^{-1}}^{\rm pre} \left( \sum_{j \not\in H^{k'}_\ell}  {c^i(v_\ell)}_j \right) \\
 & = \frac{1}{\xi} \sum_{j \in H^{k'}_\ell}  {c^i(v_\ell)}_j  + \sum_{j \not\in H^{k'}_\ell}  {c^i(v_\ell)}_j \\
 & = e_\ell + \left( \frac{1}{\xi} - 1 \right) \sum_{j \in H_\ell^i} {c^i (v_\ell)}_j 
\end{align*}
and hence,  
\begin{align}
\bar{Q_\ell}(e_\ell) = \bar{Q_\ell}^{\rm pre} Q_{g_\ell^{-1}}^{\rm pre}(e_\ell) 
&= \bar{Q_\ell}^{\rm pre} \left( e_\ell + \left( \frac{1}{\xi} - 1 \right) \sum_{j \in H_\ell^i} {c^i (v_\ell)}_j \right) \notag \\
&= \bar{Q_\ell}^{\rm pre} \left( e_\ell \right) + \left( \frac{1}{\xi} - 1 \right) \bar{Q_\ell}^{\rm pre} \left(  \sum_{j \in H_\ell^i} {c^i (v_\ell)}_j \right). \label{prop:11-2}
\end{align}

By recursively applying (\ref{prop:11-2}) to $\bar{Q_\ell}^{\rm pre} \left( e_\ell \right)$, we finally obtain
\begin{equation}
\left \langle e_\ell , \bar{Q_\ell} (e_\ell) \right \rangle = r_\ell +\left( \frac{1}{\xi} - 1 \right) m_\ell^{\rm pre} . \notag
\end{equation}

Let $m_\ell^{k'}$ be the value of $m_\ell$ obtained after the update at the $k'$-th iteration.
Then, 
\begin{equation}
\label{update-m}
m_\ell^{k'} = m_\ell^{\rm pre} + \left \langle e_\ell , \bar{Q_\ell} \left( \sum_{j \in H_\ell^{k'}} {c^{k'}(v_\ell)}_j \right) \right \rangle
\end{equation}
and, by (\ref{prop:11-1}), we obtain
\begin{align}
\lambda_{\min} (x_\ell)  &\leq \cfrac{r_\ell}{\left( r_\ell +\left( \frac{1}{\xi} - 1 \right) m_\ell^{\rm pre}  + \left( \frac{1}{\xi} - 1 \right)  \left \langle e_\ell , \bar{Q_\ell} \left( \sum_{j \in H_\ell^{k'}} {c^{k'}(v_\ell)}_j \right) \right \rangle \right)} \notag \\
&= \frac{r_\ell}{\left( r_\ell  + \left( \cfrac{1}{\xi} - 1 \right) m_\ell^{k'} \right)} . \notag
\end{align}

Since, at the end of the $k$-th iteration of  Algorithm \ref{main algorithm 2}, the last update of $m_\ell$ was at the $k'$-th iteration, we see that $m_\ell = m_\ell^{k'}$, and hence (\ref{prop:11-0}) holds after  $k$ iterations of Algorithm \ref{main algorithm 2}.
\end{proof}

Using Proposition \ref{prop:lambda-min-upper-2}, we find an upper bound on the number of iterations of Algorithm \ref{main algorithm 2}.
\begin{proposition}
\label{prop:iteration-num-ma-2}
Algorithm \ref{main algorithm 2} terminates after no more than $\frac{\xi}{1-\xi} \left( \frac{1}{\varepsilon} - 1\right) r - p + 1$ iterations.
\end{proposition}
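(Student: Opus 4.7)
The plan is to follow the counting argument of Proposition \ref{prop:iteration-num-ma}. Call iteration $k$ \emph{good at block $\ell$} if $|H_\ell^k|>0$; since the basic procedure returns cut data only when at least one $H_\ell^k$ is nonempty, every iteration of Algorithm \ref{main algorithm 2} prior to termination is good at some block. By Proposition \ref{prop:lambda-min-upper-2}, the termination test at block $\ell$ fires as soon as $m_\ell\ge N_\ell:=\xi r_\ell(1/\varepsilon-1)/(1-\xi)$, so if I can show that each good iteration at block $\ell$ increases $m_\ell$ by at least $1$, then each block hosts at most $N_\ell-1$ good iterations before termination, and the same counting as in Proposition \ref{prop:iteration-num-ma} yields a total of at most $\sum_{\ell=1}^p(N_\ell-1)+1=\frac{\xi}{1-\xi}\big(\frac{1}{\varepsilon}-1\big)r-p+1$ iterations.

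The core technical step is the per-iteration increment bound
\[
\Delta m_\ell \;=\; \Big\langle \bar{Q_\ell}\Big(\sum_{h\in H_\ell^k}c^k(v_\ell)_h\Big),\,e_\ell\Big\rangle \;\geq\; |H_\ell^k|\;\geq\;1 .
\]
By self-adjointness of each quadratic representation (and hence of the composition $\bar{Q_\ell}$), the left side equals $\langle \sum_{h\in H_\ell^k}c^k(v_\ell)_h,\,\bar{Q_\ell}(e_\ell)\rangle$. I will prove by induction on the good iterations already observed at block $\ell$ that $\bar{Q_\ell}(e_\ell)-e_\ell\in\mathcal{K}_\ell$: the base case $\bar{Q_\ell}=I_\ell$ is trivial, and the inductive step follows from the recursion (\ref{prop:11-2}),
\[
\bar{Q_\ell}(e_\ell) \;=\; \bar{Q_\ell}^{\rm pre}(e_\ell) \;+\; \Big(\tfrac{1}{\xi}-1\Big)\,\bar{Q_\ell}^{\rm pre}\Big(\sum_{j\in H_\ell^i}c^i(v_\ell)_j\Big),
\]
where $\bar{Q_\ell}^{\rm pre}(e_\ell)-e_\ell\in\mathcal{K}_\ell$ by the inductive hypothesis and the second summand lies in $\mathcal{K}_\ell$ by Proposition \ref{ptop:quadratic}; combined with $1/\xi-1>0$ and closure of $\mathcal{K}_\ell$ under addition, the induction closes. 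Taking the spectral decomposition $\bar{Q_\ell}(e_\ell)=\sum_j\mu_j p_j$, the identity $\sum_j p_j=e_\ell$ makes $\bar{Q_\ell}(e_\ell)-e_\ell=\sum_j(\mu_j-1)p_j$ a spectral decomposition of a cone element, so Proposition \ref{prop:lambda-Jordan} yields $\mu_j\ge 1$ for every $j$. Using $\langle p_j,c^k(v_\ell)_h\rangle\ge 0$ (both in $\mathcal{K}_\ell$) then gives $\langle \bar{Q_\ell}(e_\ell),c^k(v_\ell)_h\rangle\ge\langle e_\ell,c^k(v_\ell)_h\rangle=1$ for each primitive idempotent $c^k(v_\ell)_h$, and summing over $h\in H_\ell^k$ completes the increment bound.

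The main obstacle is establishing the cone invariant $\bar{Q_\ell}(e_\ell)-e_\ell\in\mathcal{K}_\ell$ and converting it into the scalar lower bound $\Delta m_\ell\ge|H_\ell^k|$; once this is in place, the iteration counting is a direct adaptation of the argument for Proposition \ref{prop:iteration-num-ma}.
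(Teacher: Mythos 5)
Your overall plan mirrors the paper's proof: bound the per-iteration increment of $m_\ell$ below by $1$, combine with Proposition \ref{prop:lambda-min-upper-2} to cap the number of good iterations per block, and then sum. The counting at the end and the use of Proposition \ref{prop:lambda-min-upper-2} to set the threshold $N_\ell$ are fine.

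However, there is a genuine gap in the key step. You write ``by self-adjointness of each quadratic representation (and hence of the composition $\bar{Q_\ell}$)'' to pass from $\langle \bar{Q_\ell}(z), e_\ell\rangle$ to $\langle z, \bar{Q_\ell}(e_\ell)\rangle$. This is not valid: while each $Q_{g_\ell^{-1}}$ is self-adjoint, a composition of self-adjoint operators is self-adjoint only if the factors commute, and the quadratic representations produced at different iterations of the algorithm come from generally unrelated Jordan frames. Since $\bar{Q_\ell} = Q^1_{g_\ell^{-1}} Q^2_{g_\ell^{-1}}\cdots Q^{k'-1}_{g_\ell^{-1}}$, what is actually true is $\langle \bar{Q_\ell}(z), e_\ell\rangle = \langle z, \bar{Q_\ell}^*(e_\ell)\rangle$ with $\bar{Q_\ell}^* = Q^{k'-1}_{g_\ell^{-1}}\cdots Q^1_{g_\ell^{-1}}$, the \emph{reverse-ordered} product. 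Your induction (via the recursion \eqref{prop:11-2}) establishes $\bar{Q_\ell}(e_\ell) - e_\ell \in \mathcal{K}_\ell$, which is the wrong object: the increment bound needs $\bar{Q_\ell}^*(e_\ell) - e_\ell \in \mathcal{K}_\ell$. The repair is straightforward and is exactly what the paper does: run the induction on the reverse composition, using $Q^{i+1}_{g_\ell^{-1}}(e_\ell) - e_\ell \in \mathcal{K}_\ell$ together with $Q^{i+1}_{g_\ell^{-1}}(\mathcal{K}_\ell) = \mathcal{K}_\ell$ (Proposition \ref{ptop:quadratic}) at each inductive step, rather than invoking \eqref{prop:11-2}. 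Once $\bar{Q_\ell}^*(e_\ell) - e_\ell \in \mathcal{K}_\ell$ is established, your cone self-duality argument (the spectral decomposition is unnecessary) and the remainder of your counting go through unchanged.
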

\begin{proof}
When $|H_\ell^k| > 0$ for $ \ell \in \{1 , \dots , p\}$ at the $k$-th iteration of Algorithm \ref{main algorithm 2}, we say that the iteration is ``good'' for the $\ell$-th block. From Proposition \ref{prop:lambda-min-upper-2}, since the (meaningful) upper bound of the minimum eigenvalue $\lambda_{\min} (x_\ell)$ of $x_\ell$ of the $\ell$-th block of any feasible solution $x$ of ${\rm P}_{S_{\infty}}(\mathcal{A})$ depends on the value of $m_\ell$, we first calculate a lower bound for the increment of $m_\ell$ per good iteration in the $\ell$-th block.
Similar to the proof of Proposition \ref{prop:lambda-min-upper-2}, suppose that the $k'$-th iteration is a good iteration for the $\ell$-th block. As shown in equation (\ref{update-m}), the value of $m_\ell$ is increased at this time by $\left \langle e_\ell , \bar{Q_\ell} \left( \sum_{j \in H_\ell^{k'}} {c^{k'}(v_\ell)}_j \right) \right \rangle$ using $\bar{Q}_\ell$ at the beginning of the $k'$-th iteration. Let us express $Q_{g_\ell^{-1}}$ using $g_\ell$ obtained at the $k$-th iteration as $Q^k_{g_\ell^{-1}}$, i.e., $\bar{Q_\ell} = Q^1_{g_\ell^{-1}} Q^2_{g_\ell^{-1}} \dots Q^{k'-1}_{g_\ell^{-1}}$. Then, the increment of $m_\ell$ at the $k'$-th iteration is as follows:
\begin{equation}
\label{eq-m-value}
\left \langle e_\ell , \bar{Q_\ell} \left( \sum_{j \in H_\ell^{k'}} {c^{k'}(v_\ell)}_j \right) \right \rangle = \left \langle Q^{k'-1}_{g_\ell^{-1}} \dots Q^1_{g_\ell^{-1}} \left( e_\ell \right) , \sum_{j \in H_\ell^{k'}} {c^{k'}(v_\ell)}_j \right \rangle.
\end{equation}
Note that $Q^{k'-1}_{g_\ell^{-1}} \dots Q^1_{g_\ell^{-1}} \left( e_\ell \right) - e_\ell \in \mathcal{K}_\ell$ holds, as we will prove below using induction.
First, if the first iteration is a good one for the $\ell$ block, then $Q^1_{g_\ell^{-1}} \left( e_\ell \right) = \frac{1}{\xi} \sum_{i \in H_\ell^1} c^1(v_\ell)_i+ \sum_{j \notin H_\ell^1} c^1(v_\ell)_j = e_\ell + \left( \frac{1}{\xi} -1 \right)\sum_{i \in H_\ell^1} c^1(v_\ell)_i$, and if it is not a good iteration, then $Q^1_{g_\ell^{-1}} \left( e_\ell \right) =e_\ell$.
Thus, $Q^1_{g_\ell^{-1}} \left( e_\ell \right) - e_\ell \in \mathcal{K}_\ell$ holds.
Next, when  $Q^{i}_{g_\ell^{-1}} \dots Q^1_{g_\ell^{-1}} \left( e_\ell \right) - e_\ell \in \mathcal{K}$ holds, by Proposition \ref{ptop:quadratic}, $Q^{i+1}_{g_\ell^{-1}} \left( Q^{i}_{g_\ell^{-1}} \dots Q^1_{g_\ell^{-1}} \left( e_\ell \right) - e_\ell \right) \in \mathcal{K}_\ell$ holds.
Furthermore, the same calculation as in the first iteration yields $Q^{i+1}_{g_\ell^{-1}} \left( e_\ell \right) - e_\ell \in \mathcal{K}_\ell$, and we see that 
\begin{align*}
Q^{i+1}_{g_\ell^{-1}} \left( Q^{i}_{g_\ell^{-1}} \dots Q^1_{g_\ell^{-1}} \left( e_\ell \right) - e_\ell \right) \in \mathcal{K}_\ell
&\Leftrightarrow Q^{i+1}_{g_\ell^{-1}} Q^{i}_{g_\ell^{-1}} \dots Q^1_{g_\ell^{-1}} \left( e_\ell \right) - Q^{i+1}_{g_\ell^{-1}} \left( e_\ell \right)  \in \mathcal{K}_\ell \\
&\Rightarrow Q^{i+1}_{g_\ell^{-1}} Q^{i}_{g_\ell^{-1}} \dots Q^1_{g_\ell^{-1}} \left( e_\ell \right) - e_\ell  \in \mathcal{K}_\ell.
\end{align*}
Thus, from (\ref{eq-m-value}), we obtain a lower bound for the increment of $m_\ell$ as
\begin{align*}
\left \langle Q^{k'-1}_{g_\ell^{-1}} \dots Q^1_{g_\ell^{-1}} \left( e_\ell \right) , \sum_{j \in H_\ell^{k'}} {c^{k'}(v_\ell)}_j \right \rangle
&\geq \left \langle e_\ell,  \sum_{j \in H_\ell^{k'}} {c^{k'}(v_\ell)}_j \right \rangle \\
&= |H^{k'}_\ell| \geq 1,
\end{align*}
which means that the value of $m_\ell$ increases by at least $1$ per good iteration.
Therefore, if the number of good iterations for the $\ell$-th block is $\left( \frac{r_\ell}{\varepsilon} - r_\ell \right) \left( \frac{\xi}{1-\xi} \right)$ or more, then from Proposition \ref{prop:lambda-min-upper-2}, we can conclude that $\lambda_{min}(x_\ell) \leq \varepsilon$ holds; i.e., we obtain an upper bound for the number of iterations of Algorithm \ref{main algorithm 2} as follows:
\begin{equation}
\sum_{\ell=1}^p \left(  \left( \frac{r_\ell}{\varepsilon} - r_\ell \right) \left( \frac{\xi}{1-\xi} \right) -1 \right) + 1 = \frac{\xi}{1-\xi} \left( \frac{1}{\varepsilon} - 1\right) r - p + 1. \notag
\end{equation}
\end{proof}

It should be noted that the number of iterations required by Algorithm \ref{main algorithm 2} to detect the non-existence of $\varepsilon$-feasible solutions is actually likely to be much smaller than the value given in Proposition \ref{prop:iteration-num-ma-2}.
This is because Proposition \ref{prop:iteration-num-ma-2} calculates the lower bound for the increment of $m_\ell$ for one good iteration as $1$.
The increment of $m_\ell$ can be calculated using $\bar{Q}_\ell$, but it is difficult to calculate the exact increment of $m_\ell$ because $\bar{Q}_\ell$ depends on the results returned by the previous basic procedure. 
Suppose that both the first and second iterations are good for the $\ell$-th block. Then, the increment of $m_\ell$ at the second iteration is 
\begin{equation}
\notag
\left \langle Q^1_{g_\ell^{-1}} \left( e_\ell \right) , \sum_{j \in H_\ell^{2}} {c^{2}(v_\ell)}_j \right \rangle = \left \langle e_\ell + \left( \frac{1}{\xi} -1 \right)\sum_{i \in H_\ell^1} c^1(v_\ell)_i , \sum_{j \in H_\ell^{2}} {c^{2}(v_\ell)}_j \right \rangle,
\end{equation}
but it is difficult to find a lower bound greater than 0 for $\left \langle \sum_{i \in H_\ell^1} c^1(v_\ell)_i , \sum_{j \in H_\ell^{2}} {c^{2}(v_\ell)}_j \right \rangle$.

\section{Additional numerical experiments}

In addition to the strongly feasible instances described in section \ref{sec: instances}, we generated the following two types of instances and conducted numerical experiments.
\begin{itemize}
\item Weakly feasible instances, i.e., $\mbox{ker} \hspace{0.3mm} \mathcal{A} \cap  \mathbb{S}^n_{++} = \emptyset$, but $\mbox{ker} \hspace{0.3mm} \mathcal{A} \cap  \mathbb{S}^n_{+} \setminus \{O\} \neq  \emptyset$.
\item Infeasible instances, i.e., $\mbox{ker} \hspace{0.3mm} \mathcal{A} \cap  \mathbb{S}^n_{+} = O $.
\end{itemize}

\subsection{How to generate instances used in the additional experiments}

Here, we describe how the weakly feasible instances and infeasible instances were generated.
Note that, due to the rounding error of the numerical computation, the weakly (ill-conditioned strongly) feasible instances generated in this experiment may not have been weakly (ill-conditioned strongly) feasible, \modifySecond{and could be infeasible or interior feasible. Thus, the term ``fragilely'' may not be appropriate, but it would be better to call them ``fragilely weakly feasible (fragilely ill-conditioned strongly).''}
\subsubsection{Weakly feasible instances}
\label{sec: weakly feasible  instances}

The weakly feasible instances were generated by Algorithm \ref{weakly feasible instance}.

\begin{algorithm}
\caption{Weakly feasible instance}
\label{weakly feasible instance}
\begin{algorithmic}[1]
\renewcommand{\algorithmicrequire}{\textbf{Input: }}
\renewcommand{\algorithmicensure}{\textbf{Output: }}

 \STATE \algorithmicrequire $n , m$, $A' = [ \ \ ]$
 \STATE \algorithmicensure $A$ 
 \STATE $B \leftarrow \mbox{rand} (n,n)$        \hspace{2.4cm}// $B$ must not be $O$
 \STATE $C \leftarrow \frac{B+B^T}{2}$                     \hspace{2.8cm}// $C \neq O$ must not be $C \succeq O$ or  $C \preceq O$
 \STATE $C_+ \leftarrow \mathcal{P}_{\mathbb{S}^n_+} (C)$  \hspace{2.5cm}// $C_+ \neq O$ since $C \neq O$ is not negative semidefinite.
 \STATE $C_- \leftarrow -\mathcal{P}_{\mathbb{S}^n_+} (-C)$   \hspace{2cm}// $C_- \neq O$ since $C \neq O$ is not  positive semidefinite.
 \STATE $c_+ \leftarrow \mbox{vec} (C_+) $ and $R \leftarrow I - \frac{1}{\|c_+\|_2^2} c_+c_+^T $
 \FOR {$ i = 1 \mbox{ to  }  m-1 $}
 \STATE $A_i' \leftarrow \mbox{rand}(n,n)$ and $A_i \leftarrow \left( A_i' + (A_i')^T \right)/2 $
 \STATE $A' \leftarrow  
\begin{pmatrix}
A' \\
\mbox{vec}(A_i)^T \\
\end{pmatrix}$ 
 \ENDFOR
 \STATE $A \leftarrow 
\begin{pmatrix}
\mbox{vec}(C_-)^T \\
A'R
\end{pmatrix}$
\end{algorithmic} 
\end{algorithm}

\begin{proposition}
For any $A \in \mathbb{R}^{m \times n^2}$ returned by Algorithm \ref{weakly feasible instance}, no $X \in \mathbb{S}^n_{++}$ exists that satisfies $A\left( \mbox{\rm vec} (X) \right)= \bm{0}$, but an $X \in \mathbb{S}^n_{+} \setminus \{ O \}$ exists that satisfies $A\left( \mbox{\rm vec} (X) \right)= \bm{0}$.
\end{proposition}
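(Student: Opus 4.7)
The plan is to unpack what Algorithm \ref{weakly feasible instance} constructs and verify the two feasibility claims separately. Write $C = C_+ + C_-$ for the spectral decomposition into its positive and negative parts: $C_+ = \mathcal{P}_{\mathbb{S}^n_+}(C)$ and $C_- = -\mathcal{P}_{\mathbb{S}^n_+}(-C)$. By the comments in the algorithm (which hold for a generic random $B$), both $C_+$ and $C_-$ are nonzero, $C_+ \in \mathbb{S}^n_+$, $-C_- \in \mathbb{S}^n_+$, and since $C_+$ and $C_-$ share a common eigenbasis with disjoint supports, one has $\langle C_+, C_- \rangle = \mathrm{tr}(C_+ C_-) = 0$.

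For the weakly feasible direction, I would take $X := C_+ \in \mathbb{S}^n_+ \setminus \{O\}$ and check that every row of $A$ is orthogonal to $c_+ = \mathrm{vec}(C_+)$. The first row is $\mathrm{vec}(C_-)^T$, whose dot product with $c_+$ equals $\langle C_-, C_+\rangle = 0$. Every remaining row has the form $\mathrm{vec}(A_i)^T R$ with $R = I - \|c_+\|_2^{-2}\, c_+ c_+^T$ the orthogonal projector onto $c_+^\perp$, so
\begin{equation}
\notag
\mathrm{vec}(A_i)^T R\, c_+ \;=\; \mathrm{vec}(A_i)^T c_+ - \frac{\mathrm{vec}(A_i)^T c_+}{c_+^T c_+}\,(c_+^T c_+) \;=\; 0.
\end{equation}
Hence $A\,\mathrm{vec}(C_+) = \bm{0}$, which gives the desired $X \in \mathbb{S}^n_+ \setminus \{O\}$.

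For the infeasibility of the interior problem, I would use the first row alone. Suppose $X \in \mathbb{S}^n_{++}$ satisfies $A\,\mathrm{vec}(X) = \bm{0}$; then in particular $\langle C_-, X \rangle = 0$. Since $-C_- \in \mathbb{S}^n_+\setminus\{O\}$ and $X \in \mathbb{S}^n_{++}$, the self-duality of $\mathbb{S}^n_+$ forces $\langle -C_-, X\rangle > 0$ strictly (the inner product of an interior element with a nonzero boundary element of a self-dual cone is strictly positive). This contradicts $\langle C_-, X \rangle = 0$, so no such $X$ can exist.

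The only subtlety, and the one I would be careful about, is verifying the ``generic'' comments sitting inside the algorithm, namely that the random $B$ produces a symmetric $C$ that is nonzero and indefinite, guaranteeing both $C_+$ and $C_-$ are nonzero; this is what prevents degenerate cases such as $C_- = O$ (which would remove the crucial first row of $A$) or $C_+ = O$ (which would leave the feasible witness trivial). Beyond this, all steps are direct linear-algebraic verifications using orthogonality and the strict positivity of $\langle P, X\rangle$ for $P \in \mathbb{S}^n_+\setminus\{O\}$ and $X \in \mathbb{S}^n_{++}$.
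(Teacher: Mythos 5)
Your proof is correct, and the second half takes a genuinely different route from the paper. For showing that no $X \in \mathbb{S}^n_{++}$ can satisfy $A\,\mathrm{vec}(X) = \bm{0}$, the paper works at the level of explicit eigenvalue decompositions: writing $C_- = PDP^T$ and $X = QEQ^T$, it expands $\langle C_-, X\rangle = \sum_i D_{ii}(P^T Q E Q^T P)_{ii}$, observes that each diagonal entry $(P^T Q E Q^T P)_{ii}$ is strictly positive because $X \succ O$, and concludes that $\langle C_-, X\rangle = 0$ forces $D = O$, contradicting $C_- \neq O$. You instead invoke the abstract duality fact that for a self-dual cone $K$, an interior point of $K$ has strictly positive inner product with every nonzero element of $K$; applied to $-C_- \in \mathbb{S}^n_+ \setminus \{O\}$ and $X \in \mathbb{S}^n_{++}$ this gives $\langle -C_-, X\rangle > 0$ directly. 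The two arguments are equivalent in substance, but yours is shorter and transparently generalizes to any symmetric cone, whereas the paper's is more elementary and self-contained (it doesn't rely on quoting the duality lemma). The first half of your proof, taking $X = C_+$ and checking row-by-row orthogonality via $\langle C_-, C_+\rangle = 0$ and $R c_+ = 0$, is the same as the paper's. Your remark that the ``comments'' in the algorithm (that $C$ be neither PSD nor NSD, so that $C_+ \neq O$ and $C_- \neq O$) are an assumption rather than a guarantee is a fair observation; the paper's proof carries the same implicit hypothesis.
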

\begin{proof}
First, we show that an $X \in \mathbb{S}^n_{+} \setminus \{ O \}$ exists that satisfies $A\left( \mbox{vec} (X) \right)= \bm{0}$. For the matrix $C_+ \in \mathbb{S}^n_{+}$ computed on line 5 of Algorithm \ref{weakly feasible instance}, we see that $C_+ \neq O$ and the following holds:
\begin{equation}
A\left( \mbox{vec} (C_+) \right)=  Ac_+ = 
\begin{pmatrix}
\mbox{vec}(C_-)^T \\
A'R
\end{pmatrix} c_+ 
=
\begin{pmatrix}
\mbox{vec}(C_-)^T c_+ \\
A'R c_+
\end{pmatrix}
=
\begin{pmatrix}
\bm{0} \\
A'( c_+ - c_+)
\end{pmatrix}
= \bm{0}. \notag
\end{equation}
Next, we show by contradiction that no $X \in \mathbb{S}^n_{++}$ exists that satisfies $A\left( \mbox{vec} (X) \right)= \bm{0}$. Suppose that an $X \in \mathbb{S}^n_{++}$ satisfies $A\left( \mbox{vec} (X) \right)= \bm{0}$. Since the first row of $A$ is $\mbox{vec}(C_-)^T$, if $A\left( \mbox{vec} (X) \right)= \bm{0}$ holds, then $\mbox{vec}(C_-)^T \mbox{vec}(X) = 0$, i.e., 
\begin{align*}
\mbox{vec}(C_-)^T \mbox{vec}(X) &= \langle C_- , X \rangle = \langle PDP^T, QEQ^T \rangle  \\
&= \langle D ,P^TQEQ^TP \rangle = \sum_{i=1}^n D_{ii} \left( P^TQEQ^TP \right)_{ii} = 0
\end{align*}
where $C_-=PDP^T$, $X = QEQ^T$, $P$ are $Q$ orthogonal matrices, and $D$ and $E$ are diagonal matrices. Here, $X \in \mathbb{S}^n_{++}$ implies $\left(P^TQEQ^TP\right)_{ii} > 0$ for any $ i \in \{ 1 , \dots, n \}$ and hence, $D$ should be $O$, but this contradicts $C_- \neq O$. Thus, no $X \in \mathbb{S}^n_{++}$ exists satisfying $A\left( \mbox{vec} (X) \right)= \bm{0}$.
\end{proof}

\subsubsection{Infeasible instances}
\label{sec: infeasible  instances}
The infeasible instances were generated by Algorithm \ref{infeasible instance}.
If we define the linear operator $\mathcal{A} : \mathbb{S}^n \rightarrow \mathbb{R}^m$ as $\mathcal{A}(X) = ( \langle A_1, X \rangle , \dots ,\langle A_m, X \rangle)^T$, then by choosing $A_1 \in \mathbb{S}^n_{++}$, we obtain $\mathcal{A}$ such that $\mbox{ker} \hspace{0.3mm} \mathcal{A} \cap \mathbb{S}^n_{+} = \{ O \} $. On the basis of this observation, by introducing a parameter $\alpha > 0$, we generated a positive definite matrix $A_1$ whose minimum eigenvalue is a uniformly distributed random number in $(0,\alpha)$. We chose $\alpha \in \{1e-1, 1e-2, 1e-3, 1e-4, 1e-5 \}$. The input of Algorithm \ref{infeasible instance} consisted of the rank of the semidefinite cone $n$, the number of constraints $m$, an arbitrary orthogonal matrix $P$, and the parameter $\alpha > 0$.

\begin{algorithm}
\caption{Infeasible instance}
\label{infeasible instance}
\begin{algorithmic}[1]
\renewcommand{\algorithmicrequire}{\textbf{Input: }}
\renewcommand{\algorithmicensure}{\textbf{Output: }}

 \STATE \algorithmicrequire $n, m, \alpha, P$, $A' = [ \ \ ]$
 \STATE \algorithmicensure $A$ 
 \STATE $B \leftarrow \mbox{rand} (n,n)$
 \STATE $B' \leftarrow \frac{B+B^T}{2}$ and then compute an orthogonal matrix $Q$ and diagonal matrix $E$ such that $B' = QDQ^T$
 \STATE $E_+ = \mbox{rand} \left( 1 \right) \times \alpha I + \mathcal{P}_{\mathbb{S}^n_+} (E)$ 
 \STATE $d \leftarrow \mbox{rand} (n)$ and $D \leftarrow \mbox{diag}(d)$
 \STATE $B_+ \leftarrow QE_+Q^T $ and $C \leftarrow PDP^T $ 
 \STATE $c = \mbox{vec}(C)  $ and $R \leftarrow I - \frac{1}{\|c\|_2^2} cc^T $
 \FOR {$ i = 1 \mbox{ to  }  m-1 $}
 \STATE $A_i' \leftarrow \mbox{rand} (n,n)$ and $A_i \leftarrow \left( A_i' + (A_i')^T \right)/2 $
 \STATE $A' \leftarrow  
\begin{pmatrix}
A' \\
\mbox{vec}(A_i)^T \\
\end{pmatrix}$
 \ENDFOR
 \STATE $A \leftarrow 
\begin{pmatrix}
\mbox{vec} (B_+)^T \\
A'R
\end{pmatrix}$
\end{algorithmic} 
\end{algorithm}

Note that the first row of the matrix $A$ returned by Algorithm \ref{infeasible instance} is ${\mbox{vec}(B_+)}^T$. Since $B_+ \in \mathbb{S}^n_{++}$, we see that ${\mbox{vec}(B_+)}^T \mbox{vec}(X) > 0$ for any positive definite matrix $X \in \mathbb{S}^n_{++}$. Thus, there is no $X \in \mathbb{S}^n_{++}$ satisfying $A \left( \mbox{vec}(X) \right)=\bm{0}$, which implies that the generated instance is infeasible.

\subsection{Additional numerical results and observations}
\label{app: numerical results}

As in section \ref{sec: numerical experiments}, we set the size of the semidefinite matrix to $n=50$ and the number of constraints $m$ using $\nu \in \{0.1, 0.3, 0.5, 0.7, 0.9\}$. 
For each $\nu \in \{0.1, 0.3, 0.5, 0.7, 0.9\}$, we generated five instances, i.e., 25 instances for a weakly feasible case and 25 instances for each of five infeasible cases (corresponding to five patterns of $\alpha = \mbox{1e-1}, \dots, \alpha = \mbox{1e-5}$, see section \ref{sec: infeasible instances} for details). Thus, we generated 25 weakly feasible instances and 125 infeasible instances. We set the upper limit of the execution time to 2 hours and compared the performance of our method (Algorithm \ref{main algorithm}, \ref{main algorithm 2}) with those of Louren\c{c}o (2019) and Pena(2017).

We classified the output-results into five types: A: an interior feasible solution is found; B: no interior feasible solution is found (ver.1); C: no $\varepsilon$-feasible solution is found (only for Loren\c{c}o (2019) and our method); D: no interior feasible solution is found (ver.2; only for Pena (2017)); E: Out-of-time. In what follows, we briefly explain how output-result type D for Pena (2017) differs from output-result type B.

\cite{Pena2017} pointed out that if ${\rm P}(\mathcal{A})$ has no interior feasible solution, meaning that if the main algorithm of Pena (2017) is applied to only ${\rm P}(\mathcal{A})$, it does not stop within a finite number of iterations. To overcome this problem, Pena et al. constructed the main algorithm in a way that it applies not only to ${\rm P}(\mathcal{A})$ but also to problem ${\rm Q}(\mathcal{A})$:

\begin{equation}
\begin{array}{llllll}
{\rm Q}(\mathcal{A})	&\mbox{find}	&X \in \mathbb{S}^n_{++}	&\mbox{s.t.}	&X \in \mbox{range} \hspace{0.3mm} \mathcal{A}^*. \notag
\end{array}
\end{equation}

Accordingly, we defined output-result type B as the case where a feasible solution of ${\rm D} (\mathcal{A})$ is obtained by applying the main algorithm to ${\rm P}(\mathcal{A})$ and defined output-result type D as the case where a feasible solution of ${\rm Q}(\mathcal{A})$ is obtained by applying the main algorithm to ${\rm Q}(\mathcal{A})$.

Table \ref{infeasible} summarizes the results for infeasible instances.
Similarly to Table \ref{strongly-feasible-MVN-SP1}, the ``CO-ratio and ``times(s)'' columns respectively show the ratio of correct outputs and the average CPU time of each method (the values in parentheses () in rows $\alpha = \mbox{1e-4}$ and $\alpha = \mbox{1e-5}$ are the average CPU times of each method excluding the instances for which the method ended up running out of time). 
When using MVN as the basic procedure, whereas our method and Louren\c{c}o (2019) found an element of $\mbox{range} \hspace{0.3mm} \mathcal{A}^* \cap \mathbb{S}^n_+$ for all instances, Pena (2017) ended up running out of time for one instance for $\alpha=\mbox{1e-4}$ and $\alpha=\mbox{1e-5}$. 

From the results for infeasible instances, we can observe the following three points. First, our method obtained correct outputs for every instance in a short execution time. This would be because it employed an efficient scaling and found an element of $\mbox{range} \hspace{0.3mm} \mathcal{A}^* \cap \mathbb{S}^n_+$. Second, the method of Pena (2017) obtained better results when SP was used as the basic procedure. As shown in Table \ref{infeasible}, the method of Pena (2017) using SP as the basic procedure solved all problems and had shorter execution times than the method using MVN. Since Pena's (2017) method calls the basic procedure not only to find points in $\mbox{ker} \hspace{0.3mm} \mathcal{A} \cap \mathbb{S}^n_{++}$ but also to find points in $\mbox{range} \hspace{0.3mm} \mathcal{A}^* \cap \mathbb{S}^n_{++}$, using SP, which can update basic procedures efficiently, is better than using MVN in terms of execution time. Third, it is not always possible to detect infeasibility (i.e., to find a point in $\mbox{range} \hspace{0.3mm} \mathcal{A}^* \cap \mathbb{S}^n_{+}$) in a shorter execution time when using SP than when using MVN. In fact, according to Louren\c{c}o (2019), the execution time is shorter when using MVN as the basic procedure than when using SP. SP is a more efficient update method than MVN in terms of satisfying a termination criterion (the criterion for moving to scaling) of the basic procedure. On the other hand, from the point of view of finding points in $\mbox{range} \hspace{0.3mm} \mathcal{A}^* \cap \mathbb{S}^n_{+}$, it is not possible to determine whether SP or MVN is more suitable. Pena (2017) used SP to significantly reduce the execution time, which is the result of updating the basic procedure for finding points in $\mbox{range} \hspace{0.3mm} \mathcal{A}^* \cap \mathbb{S}^n_{++}$ more efficiently than MVN.
Mosek obtained a point in $\mbox{range} \hspace{0.3mm} \mathcal{A}^* \cap \mathbb{S}^n_{++}$ as a feasible solution to the dual problem for all instances. From the viewpoint of execution time, Mosek was superior to the other methods.

\begin{table}
\caption{Results for infeasible instances}
\label{infeasible}
\begin{center}
\begin{tabular}{ll|rr|rr|rr|rr} \toprule
			&	&\multicolumn{2}{c|}{Algorithm \ref{main algorithm}} & \multicolumn{2}{c|}{Louren\c{c}o (2019)} &\multicolumn{2}{c|}{Pena (2017)} &\multicolumn{2}{c}{Mosek} \\ 
Instance		&BP	&CO-ratio&time(s)&CO-ratio&time(s)&CO-ratio&time(s)	&CO-ratio&time(s)\\ \midrule
\multirow{2}{*}{$\alpha= $ 1e-1}	&MVN	&25/25	&1.23	&25/25	&2.37		&25/25	&0.79	&\multirow{2}{*}{25/25}	&\multirow{2}{*}{1.22} \\
						&SP		&25/25	&1.01	&25/25	&21.46	&25/25	&0.61 	\\ 
\multirow{2}{*}{$\alpha= $ 1e-2}	&MVN	&25/25	&4.39	&25/25	&37.93	&25/25	&25.99 &\multirow{2}{*}{25/25}	&\multirow{2}{*}{1.25}\\
						&SP		&25/25	&3.87	&25/25	&62.92	&25/25	&1.05 \\ 
\multirow{2}{*}{$\alpha= $ 1e-3}	&MVN	&25/25	&5.38	&25/25	&61.61	&25/25	&61.55 &\multirow{2}{*}{25/25}	&\multirow{2}{*}{1.25}\\
						&SP		&25/25	&5.34	&25/25	&84.08	&25/25	&2.08 \\ 
\multirow{2}{*}{$\alpha= $ 1e-4}	&MVN	&25/25	&7.81	&25/25	&88.32	&24/24	&(20.80) &\multirow{2}{*}{25/25}&\multirow{2}{*}{1.24}\\
						&SP		&25/25	&7.40	&25/25	&98.79	&25/25	&33.48 \\ 
\multirow{2}{*}{$\alpha= $ 1e-5}	&MVN	&25/25	&9.08	&25/25	&76.17	&24/24	&(9.47) &\multirow{2}{*}{25/25}	&\multirow{2}{*}{1.24}\\
						&SP		&25/25	&8.00	&25/25	&91.88	&25/25	&55.42 \\ \bottomrule
\end{tabular}
\end{center}
\end{table}

For the weakly feasible instances, we compared our method (Algorithm \ref{main algorithm}), a modified version with another criteria for $\varepsilon$-feasibility (Algorithm \ref{main algorithm 2}), Louren\c{c}o (2019), and Pena (2017). The results are summarized in Table \ref{weakly-feasible-output}.
As described above, we classified the output-results into type A: an interior feasible solution is found; type B: no interior feasible solution is found  (ver.1); type C: no $\varepsilon$-feasible solution is found (only for Loren\c{c}o (2019) and our methods); type D: no interior feasible solution is found (ver.2; only for Pena (2017)); type E: Out-of-time. 
Note that $\mbox{B}^*$ indicates that the output was B, but when we converted the obtained solution to a solution of ${\rm D} (\mathcal{A})$, it contained a negative eigenvalue and violated the SDP constraint.
\modifySecond{Note that, due to rounding errors, the true state of each generated weakly feasible instance is unknown, and it is impossible to determine whether the results obtained by the algorithms are correct or incorrect. Thus, Table \ref{weakly-feasible-output} lists the output types and average execution time without noting which are correct.}

From Table \ref{weakly-feasible-output}, we can observe the following:
\begin{itemize}
\item For all the methods, the average execution time was shorter when SP was used as the basic procedure than when MVN was used.

\item All methods except Algorithm \ref{main algorithm 2} sometimes obtained output type A (an interior feasible solution is found), and Pena(2017) returned output-result D, while the obtained solution had $0 \sim 5$ negative eigenvalues (about -1e-16) and more than 20 positive eigenvalues (less than 1e-12) when we converted it into a solution of ${\rm P} (\mathcal{A})$.
\item Louren\c{c}o (2019) obtained output type $\mbox{B}^*$ (no interior feasible solution is found) but when we converted the obtained solution into a solution of ${\rm D} (\mathcal{A})$, it contained a negative eigenvalue and violated the SDP constraint). The obtained solution had $1 \sim 3$ negative eigenvalues (about -1e-6) and violated the SDP constraint when we converted it into a solution of ${\rm D} (\mathcal{A})$.
\item Our modified method (Algorithm \ref{main algorithm 2}) was able to determine the existence of an $\varepsilon$-feasible solution for all instances. This implies that, at least for this specific set of weakly feasible instances, the criteria focusing on the total value of the eigenvalues used in Algorithm \ref{main algorithm 2} is more suitable than the criteria focusing on the product of all the eigenvalues.
\end{itemize}

\begin{table}
\caption{Output types for weakly feasible instances}
\label{weakly-feasible-output}
\begin{center}
\begin{tabular}{cc|ccccc|r}\toprule
Method									&BP		&$\nu=0.1$&$\nu=0.3$&$\nu=0.5$&$\nu=0.7$&$\nu=0.9$	&time(s)\\ \midrule
\multirow{2}{*}{Algorithm \ref{main algorithm}}	&MVN	&AAAAA	&AAAAA	&AAAAA &AAAAA &BBBBB	&414.42\\
										&SP		&AAAAA	&AAAAA	&AAAAA &AAAAA &ABABB	&226.25 \\ 
\multirow{2}{*}{Algorithm \ref{main algorithm 2}} &MVN	&CCCCC	&CCCCC	&CCCCC &CCCCC  &CCCCC	&301.97\\
										&SP		&CCCCC	&CCCCC	&CCCCC &CCCCC  &CCCCC	&179.72\\ 
\multirow{2}{*}{Louren\c{c}o (2019)}			&MVN	&AAAAA & B$\mbox{B}^*$$\mbox{B}^*$$\mbox{B}^*$B & ABAAA & ABA$\mbox{B}^*$$\mbox{B}^*$ &BBBBB	&3512.78\\
										&SP		&AAAAA &AAAAA	&AAAAA	&AAAAA	&BBBBB	&1550.76 \\ 
\multirow{2}{*}{Pena (2017)}					&MVN	&EEEEE	&EEEEE	&EEEEE	&EEEEE	&EEEEE	&\\ 
										&SP		&AAAAA	&DAAAD	&AAAAA	&AAAAA	&DDDDD	&3239.12 \\ \bottomrule
\end{tabular}
\end{center}
\end{table}

Table \ref{weakly-feasible-output-Mosek} summarizes the results obtained by Mosek.
The error message ``rescode = 10006'' was obtained for 22 instances, similar to the results for the strongly feasible ill-conditioned instances.
Note that we assumed that feasible solutions were obtained for all instances since the constraint residual $\|\mathcal{A}(X^*)\|_2$ was as small as 1.1e-7 or less for all obtained solutions.

Note that for all problems with $0.1 \leq \nu \leq 0.7$, Algorithm \ref{main algorithm} and Louren\c{c}o (2019) using SP for the basic procedure returned output A, i.e., a feasible solution to the original problem. Table \ref{weakly feasible compare} summarizes the accuracies of the solutions obtained with Algorithm \ref{main algorithm}, Louren\c{c}o (2019), and Mosek for all instances with $0.1 \leq \nu \leq 0.7$. 
Chubanov’s methods sometimes returned output-result type A for weakly feasible instances, but Table \ref{weakly feasible compare} shows that the average accuracy of feasible solutions obtained by Chubanov’s methods was better than that of Mosek.

\begin{table}
\modifySecond{
\caption{Results for weakly feasible instances with Mosek}
\label{weakly-feasible-output-Mosek}
\begin{center}
\begin{tabular}{l|rr} \toprule
Instance		&time(s)	&$\| \mathcal{A} (X^*)\|_2$	\\ \midrule 
weakly feasible	&5.42 		&6.86e-9				\\ \bottomrule 
\end{tabular}
\end{center}
}
\end{table}

\begin{table}
\caption{Average of the constraint residuals $\|\mathcal{A}(X^*)\|_2$ of the solution $X^*$ obtained for the weakly feasible instances}
\label{weakly feasible compare}
\begin{center}
\begin{tabular}{l|ccc} \toprule
Value of $\nu$	&Algorithm \ref{main algorithm}	&Louren\c{c}o (2019)	&Mosek \\ \midrule
$\nu=0.1$ &1.28e-13	&5.51e-14	&1.45e-12	\\
$\nu=0.3$ &1.56e-13	&7.04e-14	&2.53e-10	\\
$\nu=0.5$ &1.40e-13	&1.05e-13	&1.29e-9	\\
$\nu=0.7$ &3.44e-13	&1.09e-13	&3.75e-9	\\\bottomrule
\end{tabular}
\end{center}
\end{table}


\section{More comparisons of the basic procedures}
\label{sec: comparisons}

In section \ref{sec: complexity of MA vs Lourenco}, we showed that the bound of the computational cost of our method is lower than that of Louren\c{c}o et al. when $\mathcal{K}$ is the $n$-dimensional nonnegative orthant $\mathbb{R}^n_+$ or a Cartesian product of simple second-order cones, and that their bounds on their costs are equivalent when $\mathcal{K}$ is a simple positive semidefinite cone under the assumption that the costs of computing the spectral decomposition and the minimum eigenvalue are the same for an $n \times n$ symmetric matrix. In this section, we make more detailed comparisons of these algorithms in terms of the performance of the cut obtained from the basic procedure and the detectability of an $\varepsilon$-feasible solution. Similarly to section \ref{sec: numerical experiments}, we will refer to Louren\c{c}o et al.'s method \cite{Bruno2019} as Louren\c{c}o (2019) throughout this section.

\subsection{Performance comparison of the two basic procedures for the simple case}

Here, for the sake of simplicity, we will focus on the case where the symmetric cone is simple, i.e., $p=1$. Let $\mathbb{E}$ be the Euclidean space corresponding to the symmetric cone $\mathcal{K}$. For any given $w, v \in \mathbb{E}$, Louren\c{c}o et al. \cite{Bruno2019} defined $\mbox{\rm vol} (w,v)$ as the volume of the intersection $H(w,v) \cap \mathcal{K}$, where $H(w,v)$ is the half space given by $H(w,v) = \{ x \in \mathbb{E} \ \ | \ \ \langle w , x \rangle \leq \langle w,v \rangle \}.$

In this section, we first identify the half-space $H(w,v)$ that will be transferred to the half-space $H(e, e/r)$ after scaling and then find the constant $\mbox{\textbf{rate}} \in \mathbb{R}$ that satisfies $\mbox{vol} (w,v) \leq \mbox{\textbf{rate}} \times \mbox{vol}(e,e/r)$, so that we can compare the proposed method and Louren\c{c}o (2019). The proposed method and Louren\c{c}o (2019) use the basic procedure results to narrow down the original problem's feasible region. It can be interpreted that the algorithm becomes more efficient as the constant $\mbox{\textbf{rate}} \in \mathbb{R}$ (indicating how much $\mbox{vol}(w,v)$ is reduced compared with $\mbox{vol}(e,e/r)$) gets smaller. In what follows, we call the constant $\mbox{\textbf{rate}} \in \mathbb{R}$ the reduction rate.

Section \ref{sec:reduction 1} derives the reduction rate of the proposed method and section \ref{sec:reduction 2} that of Louren\c{c}o (2019). The results in these sections are summarized in Table \ref{table:reduction}, where the ``UB\#iter'' column shows the upper bound on the number of iterations required in the basic procedure. The ``UB\#iter'' of Louren\c{c}o (2019) comes from Proposition 14 of \cite{Bruno2019} (where the authors showed their result by substituting $\rho = 2$), whereas that of Algorithm \ref{basic procedure} comes from Proposition \ref{prop:bp2} with $\ell = 1$. The ``Reduction rate'' of Louren\c{c}o (2019) comes from Theorem \ref{theo:reduction 2}, whereas that of Algorithm \ref{basic procedure} comes from (\ref{eq:reduction 1}) with $(w,v) = (Q_{g^{-1}}(e) , Q_g (e) /r ) $.
By setting $\rho=2$ and $\xi = 1/2$, the two bounds in``UB\#iter'' have the same value; in this case, the reduction rates turn out to be
\begin{align*}
\mbox{Louren\c{c}o (2019):} \ \left( \frac{r^r}{\det w} \right)^{\frac{d}{r}} \leq \left( e^{- \varphi (2)} \right)^{\frac{d}{r}} \simeq (0.918)^{\frac{d}{r}},  \hspace{1cm}
\mbox{Algorithm \ref{basic procedure}: }  \left( \xi^N \right) ^{\frac{d}{r}} \leq \left( \frac{1}{2} \right)^{\frac{d}{r}}.
\end{align*}
The above comparison indicates that Algorithm \ref{basic procedure} is superior to the basic procedure in \cite{Bruno2019} in terms of the reduction rate of the feasible region.

\begin{table}
\caption{Comparison of reduction rates of the two algorithms: Theoretical results}
\label{table:reduction}
\begin{center}
\begin{tabular}{c|c|c} \hline
 Basic procedure  & UB\#iter & Reduction rate \\ \hline
Louren\c{c}o (2019) & $\rho^2 r_{\max}^2$ & $ \mbox{\rm vol} (w,v) = \left( \frac{r^r}{\det w} \right)^{\frac{d}{r}} \mbox{\rm vol} (e,e/r) \leq \left( e^{- \varphi (\rho)} \right)^{\frac{d}{r}} \mbox{\rm vol} (e,e/r) $\\ 
Algorithm \ref{basic procedure} & $r_{\max}^2 / \xi^2$  & $ \mbox{\rm vol} (w, v)  = \left( \xi^N \right)^{\frac{d}{r}} \mbox{\rm vol} (e,e/r)  $  \\ \hline
\end{tabular}
\end{center}
\end{table}

\subsubsection{Theoretical reduction rate of Algorithm \ref{basic procedure}}
\label{sec:reduction 1}

Suppose that Algorithm \ref{basic procedure} returns a result such that there exists a nonempty index set $I \subseteq \{1 ,\dots , r\}$ with $|I| = N $ for which 
\begin{equation}
\langle c_i , x \rangle \leq 
\begin{cases}
\xi & i \in I \\
1 & i \notin I
\end{cases} \label{eq:I}
\end{equation}
holds for any feasible solution $x$ of ${\rm P}_{S_\infty(A)}$, where $\{c_1 , \dots , c_r\}$ are primitive idempotents that make up a Jordan frame.
Note that Algorithm \ref{basic procedure} employs the scaling $\bar{x} = Q_{g^{-1}} (x)$ with $g^{-1} = \frac{1}{\sqrt{\xi}} \sum_{i \in I}  c_i + \sum_{i \notin I} c_i$.
Let us find  $w , v \in \mathbb{E}$ which satisfy
\begin{equation}
H(e , e/r) = Q_{g^{-1}} \left( H(w,v) \right).  \label{appendix-C-1}
\end{equation}
Since (\ref{appendix-C-1}) and the scaling $\bar{x} = Q_{g^{-1}} (x)$ imply that
\begin{align*}
H(w,v) &= Q_g \left( H(e , e/r)  \right) \\
&= \{ Q_g (\bar{x}) \in \mathbb{E}  \ \ | \ \  \langle \bar{x} , e \rangle \leq 1 \} \\
&= \{ Q_g (\bar{x}) \in \mathbb{E}  \ \ | \ \  \langle Q_g (\bar{x}) , Q_{g^{-1}}(e) \rangle \leq 1 \} \\
&= \{ x \in \mathbb{E}  \ \ | \ \  \langle x , Q_{g^{-1}}(e) \rangle \leq \langle Q_{g^{-1}}(e) , Q_g (e)  /r \rangle  = 1\} ,
\end{align*}
by setting $w = Q_{g^{-1}}(e)$ and $v = Q_g (e)  /r$, we find that the half space $H(w,v)$ is transformed to $H(e,e/r)$ after the scaling.
Since $Q_{g^{-1}}(e)  \in \mbox{int} \hspace{0.75mm} \mathcal{K}$, we can apply the following proposition to $w = Q_{g^{-1}}(e)$.

\begin{proposition}[Proposition 6 of~\cite{Bruno2019}]
Suppose that $w \in \mbox{\rm int} \hspace{0.75mm} \mathcal{K}$. Then, 
\begin{align*}
Q_{w^{-1/2} \sqrt{\langle w , v \rangle}} \left( H(e , e/r )\right) &= H(w,v) , \\
\mbox{\rm vol} (w,v) &= \left( \frac{\langle w , v \rangle}{\sqrt[r]{\det w}} \right)^d \mbox{\rm vol} (e,e/r) .
\end{align*}
\end{proposition}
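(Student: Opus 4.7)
The statement has two parts, and I would treat them separately, since the first is an algebraic identity about half-spaces while the second is a change-of-variables computation for volumes.

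For the first identity, set $u := w^{-1/2}\sqrt{\langle w,v\rangle}$, which lies in $\mbox{int}\,\mathcal{K}$ because $w$ does and $\langle w,v\rangle>0$. My key reduction is to show
\[
Q_u(w) = \langle w,v\rangle \cdot e.
\]
This follows because $\sqrt{\langle w,v\rangle}$ is a scalar, so $Q_u = \langle w,v\rangle\, Q_{w^{-1/2}}$, combined with the standard Jordan-algebra fact $Q_{w^{-1/2}}(w)=e$ (equivalently, $Q_{w^{1/2}}(e)=w$). Using self-adjointness of the quadratic representation ($\langle Q_u(a),b\rangle=\langle a,Q_u(b)\rangle$) one then has for any $x\in\mathbb{E}$:
\[
\langle w, Q_u(x)\rangle = \langle Q_u(w), x\rangle = \langle w,v\rangle\,\langle e,x\rangle.
\]
Dividing by $\langle w,v\rangle>0$ gives $\langle w,Q_u(x)\rangle\le\langle w,v\rangle \iff \langle e,x\rangle\le 1 = \langle e,e/r\rangle$, which is exactly $Q_u(H(e,e/r))=H(w,v)$.

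For the volume identity, I would use that $\mbox{vol}(w,v)$ denotes the Lebesgue volume of $H(w,v)\cap\mathcal{K}$, together with the fact that $Q_u$ is a linear automorphism of $\mathcal{K}$ by Proposition~\ref{ptop:quadratic}. Hence $Q_u(H(e,e/r)\cap\mathcal{K})=H(w,v)\cap\mathcal{K}$, so the standard change-of-variables formula yields
\[
\mbox{vol}(w,v) = |\mbox{Det}\,Q_u|\cdot \mbox{vol}(e,e/r),
\]
where $\mbox{Det}$ denotes the operator determinant of $Q_u$ acting on $\mathbb{E}$. Now I invoke the classical formula $\mbox{Det}\,Q_u = (\det u)^{2d/r}$ (Faraut--Kor\'anyi, consequence of Proposition III.4.2; it reads $(\det u)^2$ for $\mathbb{R}^n_+$ and $(\det V)^{n+1}$ for $\mathbb{S}^n_+$, both consistent with $2d/r$). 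Since $u=\sqrt{\langle w,v\rangle}\,w^{-1/2}$ with $\det w^{-1/2}=(\det w)^{-1/2}$ and the scalar contributes $(\sqrt{\langle w,v\rangle})^{r}$ to $\det u$, I obtain
\[
\mbox{Det}\,Q_u = \bigl(\langle w,v\rangle^{r/2}\,(\det w)^{-1/2}\bigr)^{2d/r} = \frac{\langle w,v\rangle^d}{(\det w)^{d/r}},
\]
which matches the claimed expression $\left(\langle w,v\rangle/\sqrt[r]{\det w}\right)^d$.

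The only nontrivial obstacle is justifying the operator determinant formula $\mbox{Det}\,Q_u=(\det u)^{2d/r}$; I would cite it from Faraut--Kor\'anyi rather than reprove it, since the rest is straightforward bookkeeping. Everything else reduces to (i) the Jordan-algebraic identities $Q_{w^{1/2}}(e)=w$ and self-adjointness of $Q_u$, and (ii) the elementary fact that a linear image of a measurable set scales volume by the absolute value of the determinant of the map.
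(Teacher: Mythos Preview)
The paper does not contain its own proof of this proposition: it is quoted verbatim as Proposition~6 of \cite{Bruno2019} and used as a black box in Appendix~C, so there is nothing in the present paper to compare your argument against.

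That said, your proof is correct and is essentially the standard argument. The first part reduces cleanly to the identity $Q_{w^{-1/2}}(w)=e$ plus self-adjointness of $Q_u$, exactly as you wrote. The second part is the change-of-variables formula combined with the Faraut--Kor\'anyi identity $\mathrm{Det}\,Q_u=(\det u)^{2d/r}$, and your computation of $\det u$ and the final simplification are right. One small quibble: you justify $\langle w,v\rangle>0$ by saying ``because $w$ does''; this does not follow from $w\in\mathrm{int}\,\mathcal{K}$ alone without an assumption on $v$. The positivity of $\langle w,v\rangle$ is really an implicit hypothesis of the statement (needed for $\sqrt{\langle w,v\rangle}$ to be a positive real scalar and for $H(w,v)\cap\mathcal{K}$ to have nonzero volume), so it would be cleaner to state it as such rather than derive it.
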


Using the above proposition and the assumption $|I| = N$ for the set $I$ in (\ref{eq:I}), we can see how the volume $\mbox{\rm vol} (Q_{g^{-1}}(e) , Q_g (e) /r)$ of $H (Q_{g^{-1}}(e) , Q_g (e) /r) \cap \mathcal{K}$ decreases compared with $\mbox{\rm vol} (e , e/r)$:
\begin{align}
\notag
\mbox{\rm vol} (Q_{g^{-1}}(e)  , Q_g (e)  /r ) &= \left( \frac{1}{\sqrt[r]{\det Q_{g^{-1}}(e) }} \right)^d \mbox{\rm vol} (e,e/r) \\
&= \left( \frac{1}{\sqrt[r]{ \frac{1}{\xi^N} }} \right)^d \mbox{\rm vol} (e,e/r) 
= \left( \xi^N \right)^{\frac{d}{r}} \mbox{\rm vol} (e,e/r) .\label{eq:reduction 1}
\end{align}

\subsubsection{Theoretical reduction rate of the basic procedure of Louren\c{c}o (2019) }
\label{sec:reduction 2}

The following theorem gives the reduction rate of the basic procedure of Louren\c{c}o (2019).

\begin{theorem}[Theorem 10 of~\cite{Bruno2019}]
\label{theo:reduction 2}
Let $\rho>1$ and $y \in \mathcal{K} \setminus \{ 0 \}$ be such that $F_{{\rm P}_{S_1}(A)} \subseteq H(y , e/\rho r)$.
Let $\beta = r - \left(  \frac{1}{\rho} - \frac{1}{\sqrt{\rho(3\rho-2)}} \right), w=\frac{r-\beta}{\langle y,e \rangle} \rho r y + \beta e, v = w^{-1}.$
Then, the following hold:
\begin{enumerate}
\item $F_{{\rm P}_S(A)} \subseteq H(y ,  e / \rho r) \cap H(e ,  e  / r) \subseteq H(w,v)$
\item $Q_{\sqrt{r} w^{-1 / 2}} \left( H(e ,  e/r) \right) = H(w,v) $
\item 
\begin{equation}
\mbox{\rm vol} (w,v) = \left( \frac{r^r}{\det w} \right)^{\frac{d}{r}} \mbox{\rm vol} (e ,  e/r) 
\leq \left( \exp \left( - \varphi(\rho) \right) \right)^{\frac{d}{r}} \mbox{\rm vol} (e , e/r)  \notag
\end{equation}
where $\varphi(\rho) = 2-\frac{1}{\rho} - \sqrt{3-\frac{2}{\rho}} $.
In particular,  if  $\rho \geq 2$, we have $\mbox{\rm vol} (w ,v) < (0.918)^{\frac{d}{r}}  \mbox{\rm vol} (e , e/r)$.
\end{enumerate}
\end{theorem}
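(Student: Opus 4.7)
The plan is to address the three statements in sequence. For part (1), the inclusion $F_{{\rm P}_{S}(A)}\subseteq H(y,e/\rho r)\cap H(e,e/r)$ is immediate: the first half-space is the hypothesis on $y$, and the second follows from $\langle e,x\rangle=\|x\|_{1}\leq 1=\langle e,e/r\rangle$ for any $x\in F_{{\rm P}_{S_{1}}(A)}$. For the inclusion into $H(w,v)$, I would take a nonnegative linear combination of the two defining inequalities: expanding
\[
\langle w,x\rangle = \tfrac{(r-\beta)\rho r}{\langle y,e\rangle}\langle y,x\rangle + \beta\langle e,x\rangle
\]
and substituting $\langle y,x\rangle\leq\langle y,e\rangle/(\rho r)$ and $\langle e,x\rangle\leq 1$ yields $\langle w,x\rangle\leq(r-\beta)+\beta=r$, matching $\langle w,v\rangle=\langle w,w^{-1}\rangle=\mbox{trace}(e)=r$. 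For part (2), I would invoke the Jordan-algebra identity $Q_{w^{-1/2}}(w)=e$ (a consequence of $Q_v(v^{-2})=e$ together with $Q_{v^{1/2}}^{2}=Q_v$, both from Proposition~\ref{q-det-relation}) and the self-adjointness of the quadratic representation. For $\bar{x}\in H(e,e/r)$ and $x=Q_{\sqrt{r}\,w^{-1/2}}(\bar{x})=r\,Q_{w^{-1/2}}(\bar{x})$ one then obtains $\langle w,x\rangle=r\langle Q_{w^{-1/2}}(w),\bar{x}\rangle=r\langle e,\bar{x}\rangle$, which is $\leq r=\langle w,v\rangle$ if and only if $\bar{x}\in H(e,e/r)$; invertibility of $Q_{\sqrt{r}\,w^{-1/2}}$ then gives the set equality.

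For part (3), the volume formula comes from the Jacobian identity $\det Q_g=(\det g)^{2d/r}$ of Proposition~\ref{q-det-relation} applied to $g=\sqrt{r}\,w^{-1/2}$, combined with invariance of $\mathcal{K}$ under $Q_g$ for $g\in\mbox{int}\,\mathcal{K}$ (Proposition~\ref{ptop:quadratic}) and part (2): the Jacobian is $r^{d}(\det w)^{-d/r}$, so $\mbox{vol}(w,v)=(r^{r}/\det w)^{d/r}\mbox{vol}(e,e/r)$. The inequality then reduces to $\det w\geq r^{r}\exp(\varphi(\rho))$. Writing $w=\alpha y+\beta e$ with $\alpha=(r-\beta)\rho r/\langle y,e\rangle$ and using the spectral decomposition $y=\sum_i\lambda_i(y)c_i$, I would compute $\det w=\prod_i(\alpha\lambda_i(y)+\beta)$ subject to $\lambda_i(y)\geq 0$ and $\sum_i\lambda_i(y)=\langle y,e\rangle$. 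Concavity of $t\mapsto\log(\alpha t+\beta)$ forces the minimum of $\log\det w$ over this simplex to be attained at a vertex (the rank-one case), giving $\det w\geq((r-\beta)\rho r+\beta)\beta^{r-1}$.

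The hard part will be the one-variable verification that, with the prescribed $\beta=r-(1/\rho-1/\sqrt{\rho(3\rho-2)})$, the bound $((r-\beta)\rho r+\beta)\beta^{r-1}\geq r^{r}\exp(\varphi(\rho))$ holds for every $r\geq 1$. Setting $\gamma=r-\beta$ and dividing by $r^{r}$ reduces this to
\[
\log\!\bigl(1+\gamma(\rho-1/r)\bigr)+(r-1)\log(1-\gamma/r)\geq\varphi(\rho),
\]
so the specific $\gamma$ must be chosen precisely so that a rigorous lower-bound approximation of the left-hand side (for instance via $\log(1-x)\geq -x/(1-x)$) matches $\varphi(\rho)=2-1/\rho-\sqrt{3-2/\rho}$; the concluding numerical consequence $\varphi(\rho)\geq-\log(0.918)$ for $\rho\geq 2$ is then an immediate plug-in.
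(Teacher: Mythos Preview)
The paper does not prove this theorem; it is quoted verbatim from \cite{Bruno2019} (their Theorem~10) and used only as a benchmark in Appendix~C. So there is no ``paper's own proof'' to compare against, and your outline must be judged on its own.

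Parts (1) and (2) are correct as written. The nonnegative combination in (1) works because $r-\beta=\frac{1}{\rho}-\frac{1}{\sqrt{\rho(3\rho-2)}}>0$ for $\rho>1$ and $\beta>0$ since $r-\beta<1/\rho<1\le r$, so $w\in\mbox{int}\,\mathcal{K}$ and $v=w^{-1}$ is defined. Your argument for (2) via self-adjointness of $Q_{w^{-1/2}}$ and $Q_{w^{-1/2}}(w)=e$ is the standard one.

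Two remarks on part (3). First, a misattribution: the Jacobian you need is the \emph{linear-algebra} determinant $\mathrm{Det}\,Q_g=(\det g)^{2d/r}$ of $Q_g$ as an operator on the $d$-dimensional space $\mathbb{E}$, whereas the paper's Proposition~\ref{q-det-relation} only gives the \emph{Jordan} determinant identity $\det Q_v(x)=(\det v)^2\det x$. The operator-determinant formula is a separate result (Faraut--Kor\'anyi, Proposition~III.4.3); alternatively you can simply cite the volume formula already recorded in the paper as Proposition~6 of \cite{Bruno2019}, which gives $\mbox{vol}(w,v)=(\langle w,v\rangle/\sqrt[r]{\det w})^{d}\,\mbox{vol}(e,e/r)$ directly. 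Second, your concavity argument reducing to the rank-one vertex is correct (a concave function on a simplex is minimized at a vertex), and the resulting inequality
\[
\log\bigl(1+\gamma(\rho-1/r)\bigr)+(r-1)\log(1-\gamma/r)\ \ge\ \varphi(\rho),\qquad \gamma=\tfrac{1}{\rho}-\tfrac{1}{\sqrt{\rho(3\rho-2)}},
\]
is the right target. However, you stop at ``the specific $\gamma$ must be chosen precisely so that a rigorous lower-bound approximation \ldots\ matches $\varphi(\rho)$,'' which is not yet a proof. Numerically the left side is decreasing in $r$ with limit $\log(1+\gamma\rho)-\gamma$, so the clean route is to (i) prove that monotonicity and (ii) verify the single inequality $\log(1+\gamma\rho)-\gamma\ge\varphi(\rho)$; the substitution $u=\sqrt{3-2/\rho}$ reduces (ii) to $\log\frac{2u-1}{u}-\frac{(3-u^2)(u-1)}{2u}\ge\frac{(u-1)^2}{2}$ on $u\in(1,\sqrt{3})$, with equality at $u=1$. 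Neither step is done in your proposal, so as it stands part (3) has a genuine gap at the final estimate.
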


\subsubsection{Comparison of reduction rates of the two algorithms in numerical experiments}

To confirm whether similar reduction rates are observed numerically, we conducted an experiment where we used our method (Algorithms \ref{bp-alg-mvn} and \ref{main algorithm 2}) with $\xi = 1/2$ and Louren\c{c}o (2019) with modified von Neumann scheme to solve a weakly feasible instance with $\nu=0.1$. At each iteration of the main algorithms, we recorded the value of $\frac{r^r}{\det w} $ of Louren\c{c}o (2019) and the value of $\xi^N$ of our method and computed the reduction rates of the search region. The results are summarized in Table \ref{tabel:reduction2}.

\begin{table}
\caption{Comparison of reduction rates of the two algorithms: Numerical results}
\label{tabel:reduction2}
\begin{center}
\begin{tabular}{c|c|c|c|c} \hline
Algorithm & \#iter of M-A & Output & Average reduction rate & Final reduction rate  \\ \hline
Louren\c{c}o (2019) : BP = MVN & 3060 & A &0.864 &3.86e-195 \\ 
Algorithms \ref{bp-alg-mvn} and \ref{main algorithm 2}  &618 & C & 0.357 & 9.11e-305  \\ \hline
\end{tabular}
\end{center}
\end{table}
The ``\#iter of M-A'' column shows the number of iterations of the main algorithm.
The ``Average reduction rate'' column shows the average value of $\frac{r^r}{\det w} $ for Louren\c{c}o (2019) and the average value of $\xi^N$ for our method (Algorithms \ref{bp-alg-mvn} and \ref{main algorithm 2}). The ``Final reduction rate'' column shows the value
\begin{equation}
\frac{r^{kr}}{\det w(1) \times \det w(2) \times \dots \times \det w(k)} \notag
\end{equation}
for Louren\c{c}o (2019), where $w(k)$ denotes $w$ computed from the result of the basic procedure at the $k$-th iteration of the main algorithm, or the value 
\begin{equation}
\xi^{N_1 + \dots + N_k} . \notag
\end{equation}
for our method (Algorithms \ref{bp-alg-mvn} and \ref{main algorithm 2}), where $N_k$ denotes the number of cuts obtained from the basic procedure at the $k$-th iteration of the main algorithm.

Here, we observed that our method (Algorithms \ref{bp-alg-mvn} and \ref{main algorithm 2}) terminated at the 618-th iteration of the main algorithm with a reduction rate of 9.11e-305, while Louren\c{c}o (2019) attained a reduction rate of 5.88e-40 at the same iteration of the main algorithm.

\subsection{Detection of an $\varepsilon$-feasible solution}

Here, we discuss the capabilities of our method and Louren\c{c}o (2019) at detecting an $\varepsilon$-feasible solution. Both methods terminate their main algorithms by detecting the existence of an $\varepsilon$-feasible solution. We compared them by computing the reduction in $\log \left( \lambda_{\min} (x_\ell) \right)$ per iteration for parameter settings in which the maximum numbers of iterations of the basic procedures would be the same (i.e., $\rho=2$ in Louren\c{c}o (2019) and $\xi = 1/2$ in our method).

In \cite{Bruno2019}, for each block $\ell$, Lemma 16 ensures that $\log \left( \lambda_{\min} (x_\ell) \right)$ is bounded from above by $\epsilon_\ell$, and Theorem 17 ensures that $\epsilon_\ell$ decreases at least $\frac{\varphi(\rho)}{r_\ell}>0$ if a {\it good} iteration is obtained for the block $\ell$.
For our method, Proposition \ref{prop:lambda-min-upper} ensures that $\log \left( \lambda_{\min} (x_\ell) \right)$ is bounded from above by $\frac{\mbox{num}_\ell}{r_\ell} \log \xi$ and Proposition \ref{prop:iteration-num-ma} ensures that $\frac{\mbox{num}_\ell}{r_\ell} \log \xi$ decreases $-\frac{1}{r_\ell} \log \xi > 0 $ in the same situation.
By substituting $\rho=2$ and $\xi = 1/2$ into $\varphi(\rho)$ and $- \log \xi$ so that the upper bounds for the numbers of iterations of the basic procedures are the same, we obtain
\begin{equation}
\notag
\begin{array}{ll}
\varphi(2) = 2 - \frac{1}{2} - \sqrt{2} \simeq 0.085786, \ \ 
&-\log \frac{1}{2} = \log 2 \simeq 0.693147
\end{array}
\end{equation}
which implies that the rate of reduction in the upper bound $\log \left( \lambda_{\min} (x_\ell) \right)$ of our method is greater than that of Louren\c{c}o (2019).


\begin{thebibliography}{9}
\bibitem{Alizadeh2012} Alizadeh F. (2012) . An introduction to formally real Jordan algebras and their applications in optimization. In: Anjos M., Lasserre J. (eds) Handbook on Semidefinite, Conic and Polynomial Optimization. International Series in Operations Research \& Management Science, vol 166. Springer, Boston, MA. 
\bibitem{Chubanov2012} Chubanov, S. (2012). A strongly polynomial algorithm for linear systems having a binary solution. Mathematical programming, 134(2), 533-570.
\bibitem{Chubanov2015} Chubanov, S. (2015). A polynomial projection algorithm for linear feasibility problems. Mathematical Programming, 153(2), 687-713.
\bibitem{Demmel1997} Demmel, J. W. (1997). Applied numerical linear algebra. Society for Industrial and Applied Mathematics.
\bibitem{Faraut1994} Faraut, J. \& Kor\'{a}nyi, A. (1994). Analysis on symmetric cones. Oxford University Press, Oxford, UK.
\bibitem{Faybusovich1997} Faybusovich, L. (1997). Euclidean Jordan algebras and interior-point algorithms. Positivity 1, 331–357.
\bibitem{Golub2013} Golub, G. H., \& Van Loan, C. F. (2013). Matrix computations. JHU press.
\bibitem{JNW1934}Jordan, P., Neumann, J. v., \& Wigner, E. (1934). On an Algebraic Generalization of the Quantum Mechanical Formalism. Annals of Mathematics, 35(1), 29–64.
\bibitem{Kitahara2018} Kitahara, T., \& Tsuchiya, T. (2018). An extension of Chubanov's polynomial-time linear programming algorithm to second-order cone programming. Optimization Methods and Software, 33(1), 1-25.
\bibitem{Bruno2019} Lourenço, B. F., Kitahara, T., Muramatsu, M., \& Tsuchiya, T. (2019). An extension of Chubanov’s algorithm to symmetric cones. Mathematical Programming, 173(1-2), 117-149.
\bibitem{Lourenco2021} Lourenço, B. F., Muramatsu, M., \& Tsuchiya, T. (2021). Solving SDP completely with an interior point oracle. Optimization Methods and Software, 36(2-3), 425-471.
\bibitem{Mosek} MOSEK, A. Moset optimization toolbox for MATLAB (2019). Release, 9, 98.
\bibitem{Pena2017} Pena, J., \& Soheili, N. (2017). Solving conic systems via projection and rescaling. Mathematical Programming, 166(1-2), 87-111.
\bibitem{Pena2019}Pena, J., \& Soheili, N. (2019). Computational performance of a projection and rescaling algorithm. Optimization Methods and Software, 1-18.
\bibitem{Roos2018} Roos, K. (2018). An improved version of Chubanov's method for solving a homogeneous feasibility problem. Optimization Methods and Software, 33(1), 26-44.
\bibitem{Schmieta2003} Schmieta, S., Alizadeh, F. (2003). Extension of primal-dual interior point algorithms to symmetric cones. Mathematical Programming, Series. A 96, 409–438.
\bibitem{Soheili2012} Soheili, N., \& Pena, J. (2012). A smooth perceptron algorithm. SIAM Journal on Optimization, 22(2), 728-737.
\bibitem{Soheili2013} Soheili, N., \& Pena, J. (2013). A primal–dual smooth perceptron–von Neumann algorithm. In Discrete Geometry and Optimization (pp. 303-320). Springer, Heidelberg.
\bibitem{Wei2019} Wei, Z., \& Roos, K. (2019). Using Nemirovski’s Mirror-Prox method as Basic Procedure in Chubanov’s method for solving homogeneous feasibility problems. Manuscript (http://www. optimization-online. org/DB HTML/2018/04/6559. html).
\bibitem{Handbook} Wolkowicz, H., Saigal, R., Vandenberghe, L.: Handbook of Semidefinite Programming: Theory, Algorithms, and Applications, vol. 27. Springer, Berlin (2012)
\bibitem{Yoshise2007} Yoshise A. (2007) Interior point trajectories and a homogeneous model for nonlinear complementarity problems over symmetric cones. SIAM Journal on Optimization, 17(4), 1129-1153.
\end{thebibliography}
\end{document}